\numberwithin{equation}{section}
\numberwithin{figure}{section}
\theoremstyle{plain}
\newtheorem{thm}{\protect\theoremname}
\theoremstyle{definition}
\newtheorem{defn}[thm]{\protect\definitionname}
\theoremstyle{remark}
\newtheorem{rem}[thm]{\protect\remarkname}
\theoremstyle{plain}
\newtheorem{prop}[thm]{\protect\propositionname}
\theoremstyle{plain}
\newtheorem{fact}[thm]{\protect\factname}
\theoremstyle{plain}
\newtheorem{lem}[thm]{\protect\lemmaname}
\theoremstyle{plain}
\newtheorem{cor}[thm]{\protect\corollaryname}
\theoremstyle{remark}
\newtheorem*{rem*}{\protect\remarkname}
\theoremstyle{remark}
\newtheorem{claim}[thm]{\protect\claimname}
\date{}
\providecommand{\claimname}{Claim}
\providecommand{\corollaryname}{Corollary}
\providecommand{\definitionname}{Definition}
\providecommand{\factname}{Fact}
\providecommand{\lemmaname}{Lemma}
\providecommand{\propositionname}{Proposition}
\providecommand{\remarkname}{Remark}
\providecommand{\theoremname}{Theorem}
\begin{document}
\global\long\def\goinf{\rightarrow\infty}%
\global\long\def\gozero{\rightarrow0}%
\global\long\def\bra{\langle}%
\global\long\def\ket{\rangle}%
\global\long\def\union{\cup}%
\global\long\def\intersect{\cap}%
\global\long\def\abs#1{\left|#1\right|}%
\global\long\def\norm#1{\left\Vert #1\right\Vert }%
\global\long\def\floor#1{\left\lfloor #1\right\rfloor }%
\global\long\def\ceil#1{\left\lceil #1\right\rceil }%
\global\long\def\expect{\mathbb{E}}%
\global\long\def\e{\mathbb{E}}%
\global\long\def\r{\mathbb{R}}%
\global\long\def\n{\mathbb{N}}%
\global\long\def\q{\mathbb{Q}}%
\global\long\def\c{\mathbb{C}}%
\global\long\def\z{\mathbb{Z}}%
\global\long\def\grad{\nabla}%
\global\long\def\t{\mathbb{T}}%
\global\long\def\all{\forall}%
\global\long\def\eps{\varepsilon}%
\global\long\def\quadvar#1{V_{2}^{\pi}\left(#1\right)}%
\global\long\def\cal#1{\mathcal{#1}}%
\global\long\def\cross{\times}%
\global\long\def\del{\nabla}%
\global\long\def\parx#1{\frac{\partial#1}{\partial x}}%
\global\long\def\pary#1{\frac{\partial#1}{\partial y}}%
\global\long\def\parz#1{\frac{\partial#1}{\partial z}}%
\global\long\def\part#1{\frac{\partial#1}{\partial t}}%
\global\long\def\partheta#1{\frac{\partial#1}{\partial\theta}}%
\global\long\def\parr#1{\frac{\partial#1}{\partial r}}%
\global\long\def\curl{\nabla\times}%
\global\long\def\rotor{\nabla\times}%
\global\long\def\one{\mathbf{1}}%
\global\long\def\Hom{\text{Hom}}%
\global\long\def\p{\mathbb{P}}%
\global\long\def\almost{\mathbf{\approx}}%
\global\long\def\tr{\text{Tr}}%
\global\long\def\var{\text{Var}}%
\global\long\def\cov{\text{Cov}}%
\global\long\def\onenorm#1{\left\Vert #1\right\Vert _{1}}%
\global\long\def\twonorm#1{\left\Vert #1\right\Vert _{2}}%
\global\long\def\Inj{\mathfrak{Inj}}%
\global\long\def\inj{\mathsf{inj}}%
\global\long\def\Inf{\mathrm{Inf}}%
\global\long\def\i{\mathrm{I}}%
\global\long\def\sign{\mathrm{sign}}%
\global\long\def\tensor{\otimes}%
\global\long\def\duplimatch#1#2{#1\diamond#2}%
\global\long\def\aut{\mathrm{Aut}}%
\global\long\def\twist#1#2#3{#1\stackrel{#3}{\star}#2}%

\global\long\def\g{\mathfrak{\cal G}}%
\global\long\def\f{\mathfrak{\cal F}}%
\global\long\def\circlaw{\mathfrak{\mu_{\text{circ}}}}%
\global\long\def\permvector{\mathfrak{\bar{\sigma}}}%

\title{Randomly twisted hypercubes -- between structure and randomness}
\author{Itai Benjamini\thanks{Weizmann Institute of Science, Department of Mathematics. Email: itai.benjamini@gmail.com.
Supported by the Israel Science Foundation.}, Yotam Dikstein\thanks{Weizmann Institute of Science, Department of Computer Science and
Applied Mathematics. Email: yotam.dikstein@weizmann.ac.il.}, Renan Gross\thanks{Weizmann Institute of Science, Department of Mathematics. Email: renan.gross@weizmann.ac.il.
Supported by the Adams Fellowship Program of the Israel Academy of
Sciences and Humanities, the European Research Council and by the
Israeli Science Foundation.} ~and Maksim Zhukovskii\thanks{Tel Aviv University, School of Mathematical Sciences. Email: zhukmax@gmail.com.
Supported in part by the Israel Science Foundation grant 2110/22.}}
\maketitle
\begin{abstract}
Twisted hypercubes are generalizations of the Boolean hypercube, obtained
by iteratively connecting two instances of a graph by a uniformly
random perfect matching. Dudek et al. showed that when the two instances
are independent, these graphs have optimal diameter. 

We study twisted hypercubes in the setting where the instances can
have general dependence, and also in the particular case where they
are identical. We show that the resultant graph shares properties
with random regular graphs, including small diameter, large vertex
expansion, a semicircle law for its eigenvalues and no non-trivial
automorphisms. However, in contrast to random regular graphs, twisted
hypercubes allow for short routing schemes.

\tableofcontents{}
\end{abstract}

\section{Introduction and construction}

The Boolean hypercube $Q_{n}$ is the graph whose vertex set is $V\left(Q_{n}\right)=\left\{ 0,1\right\} ^{n}$
and whose edge set is $E\left(Q_{n}\right)=\left\{ \left\{ x,y\right\} \mid\text{\ensuremath{x} and \ensuremath{y} differ by exactly one coordinate}\right\} $.
One appeasing property of the hypercube graph is its recursive construction:
starting with $Q_{1}$ as a single edge, $Q_{n}$ is given by the
Cartesian graph product $Q_{n}=Q_{1}\boxempty Q_{n-1}$; essentially,
the Cartesian product with an edge amounts to matching together the
corresponding vertices of two disjoint copies of $Q_{n-1}$. See Figure
\ref{fig:hypercube_construction} for the first steps of this process.

\begin{figure}[H]
\begin{centering}
\includegraphics[scale=0.4]{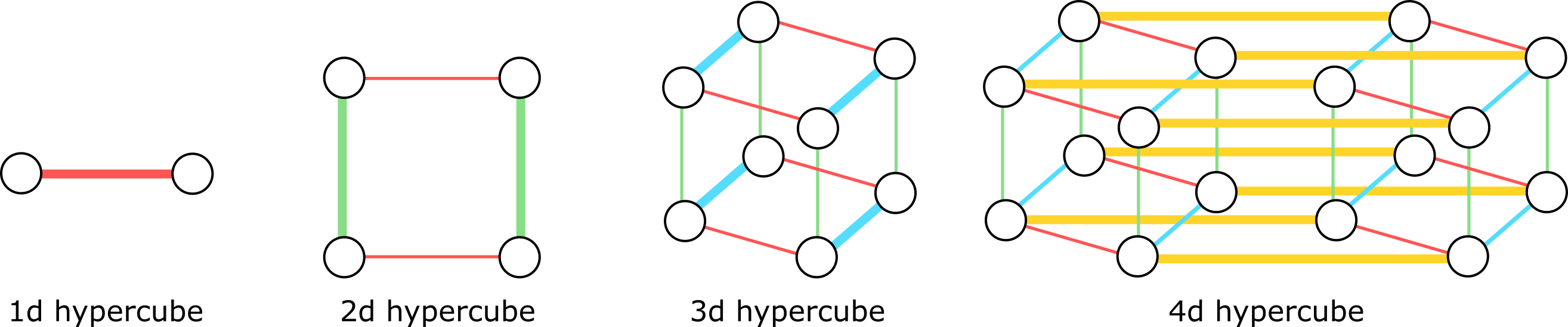}
\par\end{centering}
\centering{}\caption{\label{fig:hypercube_construction}The recursive construction of the
hypercube. The color of an edge indicates in which step it was created.
Newly created edges are in bold. }
\end{figure}
Generalizing this procedure gives rise to the definition of a \emph{twisted
hypercube}, which is obtained by iteratively applying perfect matchings
between the vertices of two copies of the original graph.
\begin{defn}[$\sigma$-twist]
Let $G_{0}=\left(V,E_{0}\right)$ and $G_{1}=\left(V,E_{1}\right)$
be two finite graphs on the same vertex set, and let $\sigma$ be
a permutation of the vertices $V$. The \emph{$\sigma$-twist} operation,
denoted $\twist{G_{0}}{G_{1}}{\sigma}$, produces a graph $\twist{G_{0}}{G_{1}}{\sigma}=\left(V',E'\right)$,
defined as follows. For $i=0,1$, let $V^{i}=\left\{ \left(x,i\right)\mid x\in V\right\} $
and $F^{i}=\left\{ \left\{ \left(x,i\right),\left(y,i\right)\right\} \mid\left\{ x,y\right\} \in E_{i}\right\} $.
Then $\twist{G_{0}}{G_{1}}{\sigma}$ has vertex set
\[
V'=V^{0}\union V^{1}
\]
and edge set 
\[
E'=F^{0}\union F^{1}\union\left\{ \left\{ \left(x,0\right),\left(\sigma\left(x\right),1\right)\right\} \mid x\in V\right\} .
\]
Alternatively, if $A_{i}\in\r^{m\times m}$ is the adjacency matrix
of $G_{i}$, and $P$ is the $m\times m$ permutation matrix representing
$\sigma$, then the adjacency matrix of $\twist{G_{0}}{G_{1}}{\sigma}$
is given by
\begin{equation}
\left(\begin{array}{cc}
A_{0} & P\\
P^{T} & A_{1}
\end{array}\right).\label{eq:matrix_next_step}
\end{equation}
\end{defn}

\begin{defn}[Twisted hypercube]
Using the $\sigma$-twist operation, we define a class of recursively
constructed graphs.
\end{defn}

\begin{enumerate}
\item A\emph{ twisted hypercube} graph of generation $n$, denoted $G_{n}$,
is defined as follows. For $n=0$, $G_{0}$ is an isolated vertex,
labeled by $\emptyset$, and for $n=1$, $G_{1}$ is a single edge,
i.e. $V\left(G_{1}\right)=\left\{ 0,1\right\} $ and $E\left(G_{1}\right)=\left\{ \left\{ 0,1\right\} \right\} $.
For general $n>1$, let $G_{n-1}^{0}$ and $G_{n-1}^{1}$ be two twisted
hypercubes of generation $n-1$, let $\sigma_{n-1}$ be a permutation
on $\left\{ 0,1\right\} ^{n-1}$ vertices, and define
\begin{align}
G_{n} & =\twist{G_{n-1}^{0}}{G_{n-1}^{1}}{\sigma_{n-1}}.\label{eq:twisted_hypercube_definition}
\end{align}
\item It is possible to consider random permutations in this construction;
$G_{n}$ is then called a \emph{random twisted hypercube}. In this
case, the definition requires also specifying the joint distribution
of $G_{n-1}^{0}$ and $G_{n-1}^{1}$ in the $\sigma$-twist operation.
For the rest of the paper, we assume that all the permutations $\sigma_{k}$
are chosen uniformly at random for every $k$.
\item When the all permutations $\sigma_{k}$, $k=1,\ldots,n-1$ are independent
of all other permutations, and the two instances $G_{n-1}^{0}$ and
$G_{n-1}^{1}$ are independent for all $n$, we call $G_{n}$ an \emph{independent
twisted hypercube.}
\item When the two instances $G_{n-1}^{0}$ and $G_{n-1}^{1}$ are identical
for all $n$, we call the graph a \emph{duplicube}. In this case,
the graph $G_{n}$ can be described by a single sequence of permutations
$\permvector=\left(\sigma_{k}\right)_{k=1}^{\infty}$, where each
$\sigma_{k}$ is a permutation on $\left\{ 0,1\right\} ^{k}$: the
vertex set is $V\left(G_{n}\right)=\left\{ 0,1\right\} ^{n}$, and
for every $k\in\left[n\right]$, the vertex $x=\left(x_{1},\ldots x_{k-1},0,x_{k+1},\ldots,x_{n}\right)$
is connected to $y=\left(\sigma_{k-1}\left(x_{1},\ldots,x_{k-1}\right),1,x_{k+1},\ldots,x_{n}\right)$.
We write $G_{n}=G_{n}\left(\bar{\sigma}\right)$ when we wish to stress
the dependence on the permutations. See Figure \ref{fig:duplicube_construction}
for the first steps of this process.
\end{enumerate}
\begin{figure}[H]
\begin{centering}
\includegraphics[scale=0.4]{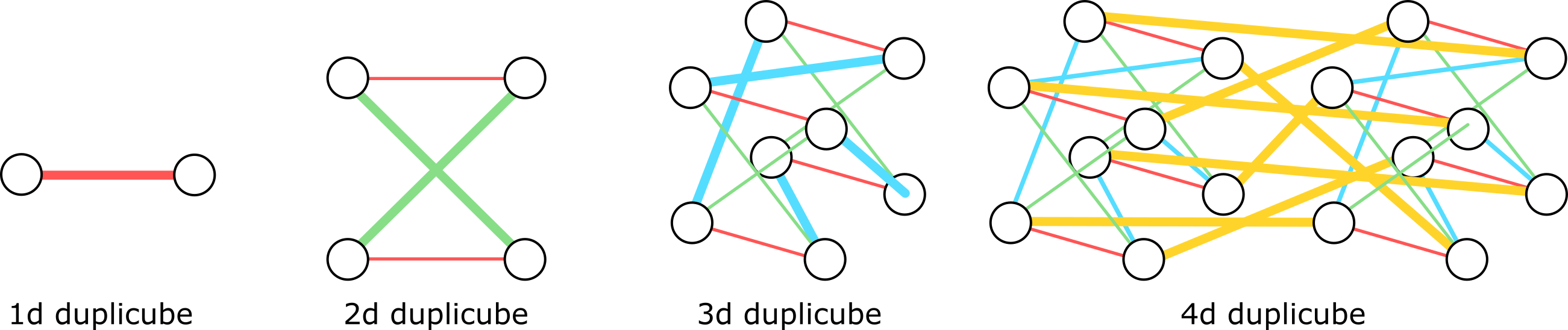}
\par\end{centering}
\centering{}\caption{\label{fig:duplicube_construction}An example of the recursive construction
of the duplicube, using random matchings. The color of an edge indicates
in which step it was created. Newly created edges are in bold.}
\end{figure}

The term \emph{twisted cubes} was first introduced in the context
of routing in computing networks \cite{hilbers_et_al_twisted_cube,abraham_padmanabhan_twisted_cube}.
The idea is that slight modifications to the structure of the hypercube
can yield graphs with both better diameter (and so, smaller latency)
and better connectivity (and so, better fault-tolerance) than the
hypercube. Dudek et al. \cite[definition 2]{dudek_et_al_randomly_twisted_hypercube}
first introduced randomness to these constructions, and studied independent
instances connected by uniform matchings. They named their construction
\emph{random twisted hypercubes}. Since our definition generalizes
theirs by allowing different joint distributions of matched instances,
we have chosen to use the name \emph{random twisted cubes }for the
general case, and \emph{independent twisted cubes }for their special
case. 

Any twisted hypercube $G_{n}$ is an $n$-regular graph with $N=2^{n}$
vertices. When $\sigma_{k}$ is the identity permutation for every
$k$, then $G_{n}$ is just the Boolean hypercube graph $Q_{n}$.
The hypercube has diameter $n$, has poor vertex- and edge-expansion
(relative to the fact that its degree grows with the graph size; see
Section \ref{subsec:vertex_expansion}), and a random-walk mixing
time of order $\Theta\left(n\log n\right)$ \cite{levin_peres_markov_chains_and_mixing_times}.
Many other geometric and structural properties of the hypercube are
known (e.g. distances between vertices \cite{hypercube_survey} and
isoperimetric inequalities for various sets \cite{odonnell_analysis_of_boolean_functions,jiang_yehudayoff_isopetimetric_inequalities,bobkov_isoperimetric_inequality_on_the_discrete_cube}). 

Another well-researched class of $n$-regular graphs are the uniformly
random regular graphs. With probability $1-o\left(1\right)$, a random
$n$-regular graph on $2^{n}$ vertices has diameter $\Theta\left(n/\log n\right)$
\cite{bollobas_dela_vega_diameter_of_random_regular}, has high edge-expansion
\cite{bauerschmidt_huang_knowles_yau_intermediate_degree} and a random-walk
mixing time of order $\Theta\left(n/\log n\right)$ \cite{lubetzky_sly_cutoff_phenomena}.
Further, its eigenvalues follow a semicircle distribution \cite{dumitriu_pal_sparse_regular_random_graphs}. 

For fixed $n$ and $N\to\infty$, the uniform distribution over $n$-regular
graphs on $N$ vertices can be approximated by adding $n$ successive
random perfect matchings on $N$ isolated vertices, where the $i$-th
matching is uniform over all matchings on previously-unmatched pairs
of vertices \cite[Theorem 8]{gao_perfect_matchings_regular_graphs}.
In contrast, consider the random twisted hypercube $\boldsymbol{G}_{n}$,
where all $\sigma_{k}$ are uniformly random permutations on $\left\{ 0,1\right\} ^{k}$,
with independence between different $k$'s . It consists of a union
of $n$ independent matchings as well, but these matchings are not
uniformly random. For example, the last matching is a uniformly random
matching only between the two instances of $\boldsymbol{G}_{n-1}$,
while earlier matchings consist of a union of smaller matchings; in
the case of the duplicube, they consist of \emph{copies} of smaller
matchings and therefore have even stronger dependencies between the
edges. In this sense, the random twisted hypercube $\boldsymbol{G}_{n}$
is a hybrid between the structure of the Boolean hypercube and the
randomness of a random $n$-regular graph. It is therefore natural
to ask how its various geometric and structural properties compare
to those of the hypercube and random $n$-regular graphs. 
\begin{rem}
An $n$-th iteration duplicube is defined by a single sequence of
permutations $\sigma_{1},\ldots,\sigma_{n-1}$. To sample such a sequence,
one requires approximately $\Theta(n2^{n})$ random bits. An independent
twisted hypercube, on the other hand, requires $\Theta(n^{2}2^{n})$
random bits to sample since it is defined using $2^{n-k-1}$ independent
copies of $\sigma_{k}$ for every $k$. As we will see in the next
section, despite the fact that it uses less randomness, the structural
properties of the duplicube still match those of the independent twisted
cube -- it has optimal diameter and constant vertex expansion.
\end{rem}

\section{Our results}

In this work, we study the diameter, expansion, eigenvalues, and symmetries
of a random twisted hypercube $\boldsymbol{G}_{n}$. All our theorems,
except Theorem \ref{thm:small_diameter_duplicube}, hold for any twisted
hypercube where the matchings $\sigma_{k}$ are uniformly random and
with independence between permutations of different generations, regardless
of the joint distribution of the instances in (\ref{eq:twisted_hypercube_definition}). 

\subsection{The diameter}

For a graph $G=\left(V,E\right)$, let $d_{G}:V^{2}\to\r$ be the
graph distance between two vertices. The diameter of a graph is the
maximum distance in the graph, i.e. $D\left(G\right):=\max\left\{ d_{G}\left(x,y\right)\mid x,y\in V\right\} $.
An immediate result shows that the diameter of the hypercube $Q_{n}$
has the worst possible diameter out of all twisted hypercube graphs.
\begin{prop}
\label{prop:trivial_upper_bound}For every choice of permutations
$\sigma_{k}$, we have $D\left(G_{n}\right)\leq D\left(Q_{n}\right)=n$.
\end{prop}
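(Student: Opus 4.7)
The plan is to prove the bound $D(G_n) \le n$ by induction on $n$, using the recursive structure of the $\sigma$-twist, and then separately note that $D(Q_n) = n$ by the standard antipodal argument. Since the proposition is asserted for every choice of permutations, the randomness plays no role and this is a purely structural statement.

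For the base case $n = 1$, the graph $G_1$ is a single edge so $D(G_1) = 1$. For the inductive step, write $G_n = \twist{G_{n-1}^0}{G_{n-1}^1}{\sigma_{n-1}}$ and take any two vertices $u, v \in V(G_n) = V^0 \cup V^1$. There are three cases. If $u, v \in V^i$ for the same $i \in \{0,1\}$, then the induced subgraph on $V^i$ is isomorphic to $G_{n-1}^i$, so by the inductive hypothesis there is a path of length at most $n-1 \le n$ between them. If $u = (x,0) \in V^0$ and $v = (y,1) \in V^1$, traverse the cross-edge from $u$ to $(\sigma_{n-1}(x), 1) \in V^1$, and then apply the inductive hypothesis inside $V^1$ (isomorphic to $G_{n-1}^1$) to reach $v$ in at most $n-1$ additional steps, for a total of at most $n$. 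The symmetric case $u \in V^1$, $v \in V^0$ is identical. This establishes $D(G_n) \le n$.

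For the equality $D(Q_n) = n$, the upper bound follows from the above (taking all $\sigma_k$ to be the identity), and the lower bound is witnessed by the pair $0^n$ and $1^n$: any path between them must flip each of the $n$ coordinates at least once since edges of $Q_n$ flip exactly one coordinate, hence has length at least $n$.

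There is no genuine obstacle here; the proposition is essentially an immediate consequence of the recursive definition, and the proof is a two-line induction plus a one-line remark for the hypercube. The only thing to be slightly careful about is to use the correct identification $V^i \leftrightarrow V(G_{n-1}^i)$ so that distances in the induced subgraph on $V^i$ agree with distances in $G_{n-1}^i$, which is immediate from the definition of the edge set $F^i$.
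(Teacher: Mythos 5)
Your proof is correct and uses essentially the same two-case induction as the paper (same half: recurse inside the copy; opposite halves: one cross-edge and then recurse). The only addition is your short justification that $D(Q_n)=n$ via the antipodal pair, which the paper takes as known.
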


\begin{proof}
By induction. For $n=1$, it is clear. In the general case, let $x,y\in V\left(G_{n}\right)$,
and denote $x=\left(\tilde{x},x_{n}\right)$, $y=\left(\tilde{y},y_{n}\right)$.
If $x_{n}=y_{n}$, then $\tilde{x},\tilde{y}\in V\left(G_{n-1}\right)$,
and $d_{G_{n}}\left(x,y\right)=d_{G_{n-1}}\left(\tilde{x},\tilde{y}\right)\leq n-1$.
Otherwise, $x$ is connected to some $\left(x',1-x_{n}\right)$, and
\[
d_{G_{n}}\left(x,y\right)\leq1+d_{G_{n-1}}\left(x',\tilde{y}\right)\leq n.
\]
\end{proof}
The following lower bound is also immediate.
\begin{prop}
For every choice of permutations $\sigma_{k}$, we have $D\left(G_{n}\right)\geq\left(n-1\right)/\log_{2}n$.
\end{prop}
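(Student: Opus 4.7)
The plan is to apply the classical Moore bound for regular graphs, exploiting the fact that $G_n$ is an $n$-regular graph on $2^n$ vertices for every choice of permutations $\sigma_k$. This is immediate from the $\sigma$-twist construction and a trivial induction, and no further property of the permutations is needed.

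Concretely, I would fix an arbitrary vertex $v \in V(G_n)$ and bound the size of the ball $B_D(v)$ of radius $D = D(G_n)$ centred at $v$. Since $G_n$ has maximum degree $n$, the number of vertices at distance exactly $k$ from $v$ is bounded by the number of length-$k$ walks from $v$, which is at most $n^k$. Summing the geometric series yields
\[
|B_D(v)| \leq \sum_{k=0}^{D} n^k = \frac{n^{D+1}-1}{n-1} \leq 2 n^D
\]
for $n \geq 2$. Because $D$ is the diameter, $B_D(v) = V(G_n)$, so $2^n \leq 2 n^D$; taking $\log_2$ gives $n - 1 \leq D \log_2 n$, which is the desired inequality.

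There is essentially no obstacle here: the bound is a structural fact that depends only on the regularity and vertex count of $G_n$, not on any subtlety of how the $\sigma_k$ are chosen. The only minor care worth noting is the handling of small $n$ (the expression $(n-1)/\log_2 n$ is vacuous at $n = 1$), but for $n = 1$ the statement is trivial and for $n \geq 2$ the argument above applies verbatim. A slightly sharper version could be obtained by using the tighter bound $n(n-1)^{k-1}$ for the sphere of radius $k \geq 1$, but this refinement is not needed to reach $(n-1)/\log_2 n$.
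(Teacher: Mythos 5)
Your proposal is correct and uses essentially the same argument as the paper: both bound $\lvert B(v,D)\rvert$ by roughly $2n^{D}$ using $n$-regularity, set it against $2^{n}$, and take $\log_{2}$. You merely spell out the geometric series that the paper states implicitly.
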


\begin{proof}
If $G_{n}$ has diameter $d$, then the ball $B\left(v,d\right)$
of radius $d$ around any vertex $v$ must contain the entire graph.
Since the graph is $n$-regular, the number of vertices in this ball
is smaller than $2n^{d}$, and we get
\[
2^{n}=\abs{B\left(v,d\right)}\leq2n^{d},
\]
yielding
\[
d\geq\frac{n-1}{\log_{2}n}.
\]
\end{proof}
It was shown by Dudek et al. \cite{dudek_et_al_randomly_twisted_hypercube}
that for the independent twisted hypercube (where the permutations
$\sigma_{k}$ are chosen uniformly at random, and the instances of
$\boldsymbol{G}_{n-1}$ in the $\sigma$-twist operation $\boldsymbol{G}_{n}=\twist{\boldsymbol{G}_{n-1}^{0}}{\boldsymbol{G}_{n-1}^{1}}{\sigma_{n-1}}$
are independent), the diameter of $\boldsymbol{G}_{n}$ is almost
surely asymptotic to $n/\log_{2}n$. We show that their proof technique
carries over to the duplicube as well.
\begin{thm}
\label{thm:small_diameter_duplicube} Let $\boldsymbol{G}_{n}$ be
the random duplicube. Then $D(\boldsymbol{G}_{n})=\frac{n}{\log_{2}n}+O(\frac{n}{\log^{2}n})$
with probability $\geq1-o(2^{-n})$.
\end{thm}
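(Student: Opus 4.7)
The lower bound $D(\boldsymbol{G}_n) \geq (n-1)/\log_2 n$ is already supplied by the preceding proposition, so only the matching upper bound requires work. My plan is to adapt the BFS ball-growth strategy of Dudek et al., carefully tracking the additional dependencies introduced by the duplicube construction. Setting $r = \frac{n}{\log_2 n} + \frac{C n}{\log^2 n}$ for a sufficiently large constant $C$ and $s = \lfloor r/2 \rfloor$, I would first reduce via a union bound over the $\binom{2^n}{2} < 2^{2n}$ ordered pairs of vertices to the one-pair statement
\[
\Pr\bigl[d_{\boldsymbol{G}_n}(x,y) > r\bigr] \leq o(2^{-3n})
\]
for each fixed pair $x \neq y$; this then yields the desired $1 - o(2^{-n})$ probability on the diameter.

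The technical core is a ball-growth estimate: for any fixed $v \in V(\boldsymbol{G}_n)$, with probability $1 - o(2^{-3n})$ the ball $B_s(v)$ in $\boldsymbol{G}_n$ has size at least $M := 2^{n/2 + \Omega(n/\log n)}$. I would establish this by induction on the generation $k$. At generation $k$, the ball around $v = (\tilde v, i)$ in $\boldsymbol{G}_k$ contains the ball around $\tilde v$ in $\boldsymbol{G}_{k-1}$, together with vertices obtained by crossing through the random matching $\sigma_{k-1}$ and then continuing to grow in the opposite copy. Conditioning on all randomness below generation $k$ so that $\boldsymbol{G}_{k-1}$ is fixed, $\sigma_{k-1}$ is uniform and independent, so for any deterministic set $S \subseteq V(\boldsymbol{G}_{k-1})$ of size $m$, the image $\sigma_{k-1}(S)$ is a uniform random $m$-subset of the opposite copy, from which further BFS can be launched. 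Careful bookkeeping then shows the ball size multiplies roughly by $\Theta(k)$ per BFS step until saturation, and martingale/Chernoff-type concentration gives exceptional probabilities strong enough to feed the union bound.

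The step I expect to be the main obstacle is the defining feature of the duplicube: the two halves $\boldsymbol{G}_{k-1}^0$ and $\boldsymbol{G}_{k-1}^1$ are \emph{identical} rather than independent, so the clean argument of Dudek et al., which treats ball growth on the two sides as independent after revealing the matching, does not transfer verbatim. The remedy is that the matching $\sigma_{k-1}$ is nonetheless drawn independently of all lower-level randomness, so conditionally on the (shared) graph at level $k-1$ the only remaining source of randomness is the uniform permutation $\sigma_{k-1}$; this reduces the question to bounding $|\sigma_{k-1}(S) \cap T|$ for deterministic sets $S, T$ depending only on the fixed lower-level graph, which is handled by hypergeometric-type concentration. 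Once the ball-growth estimate is in hand, the intersection step is comparatively routine: with $|B_s(x)|, |B_s(y)| \geq M$, the probability that the two balls are not joined by any edge coming from a top-level round of matching randomness is at most $(1 - M/2^n)^M \leq \exp(-M^2/2^n) = \exp(-2^{\Omega(n/\log n)})$, which is doubly exponentially small and easily absorbed by the $o(2^{-3n})$ budget.
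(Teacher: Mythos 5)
Your overall strategy---union bound over vertex pairs, BFS ball-growth to size roughly $2^{n/2}$, then a top-level intersection argument via $\sigma_{n-1}$---matches the paper's approach, and you correctly identify the central obstacle: the two halves of the duplicube are identical rather than independent, so the Dudek et al.\ analysis cannot be imported wholesale. However, your description of the remedy has a gap that, as written, would prevent the argument from closing.

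The paper does \emph{not} do an induction on generation $k$, nor does it reveal $\sigma_{k-1}$ wholesale after conditioning on the lower-level graph. Instead, it runs a single BFS in $\boldsymbol{G}_n$ (restricted to $I_{n-1}(v)$, using only edges of generations $\geq 0.9n$ and $\leq n-2$) and reveals permutation entries \emph{one at a time} as the BFS frontier demands them. This simultaneity is essential: by exploring edges from all $\Theta(n)$ high generations in a single BFS step, the ball grows by a factor $\Theta(n)$ per step, giving size $2^{\Theta(n)}$ after $\Theta(n/\log n)$ steps. A literal generation-by-generation induction only exposes one new permutation per level, yielding at best a factor-$2$ growth per level, which recovers the trivial diameter bound $n$, not $n/\log_2 n$. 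So the framing of your ball-growth argument as an induction on $k$ does not achieve the needed branching.

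More importantly, your proposed fix---conditioning on the shared graph at level $k-1$ so that deterministic sets $S, T$ are fixed and $|\sigma_{k-1}(S)\cap T|$ is hypergeometric---doesn't address what actually goes wrong in the duplicube. During the one-entry-at-a-time BFS, revealing a single value $\sigma_i(u_1)$ commits \emph{all} $2^{n-1-i}$ edges of the form $\{(u_1,0,u'),(\sigma_i(u_1),1,u')\}$ at once, so the exploration has strongly correlated branches across copies of $\boldsymbol{G}_{i+1}$. The paper's precise adaptation is to count ``bad steps'' in terms of the number of discovered \emph{prefixes}, not vertices: a step is bad if the newly revealed prefix $\sigma_i^{\pm 1}(u_1)$ was already discovered, and the probability of this is controlled because the prefix count is upper-bounded by the vertex count and stays well below $2^{0.9n}$ throughout both phases of the exploration. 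This prefix-accounting is the one genuinely new ingredient relative to the independent twisted hypercube, and it is not captured by a fixed-set hypergeometric bound---the sets $S,T$ you would need are themselves adaptive, being determined by the partially revealed permutations. Without it, the dependency between the two identical halves (and across the $2^{n-1-i}$ coupled copies for each $i$) is unaccounted for. Your plan would need to replace the ``condition-then-hypergeometric'' step with this martingale-style, entry-by-entry exposure with prefix bookkeeping to become a proof.

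As a side note, your target ball size $M = 2^{n/2+\Omega(n/\log n)}$ is considerably larger than necessary; the paper settles for $n2^{n/2}$, which already gives an intersection failure probability of $e^{-n^2}$, comfortably inside the $o(2^{-n})$ budget after the union bound. This is not an error, merely an overkill that would make the ball-growth estimate slightly harder to prove.
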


Moreover, we show that regardless of the joint distribution of the
two instances of $\boldsymbol{G}_{n-1}$, the diameter is asymptotically
better than that of Proposition \ref{prop:trivial_upper_bound} by
at least a $\log\log n/\log\log\log n$ factor. The following theorem
is proved in Section \ref{subsec:postponed_diameter_proofs}.
\begin{thm}
\label{thm:small_diameter_general_case}There exists a constant $C>0$
such that
\begin{equation}
D\left(\boldsymbol{G}_{n}\right)\leq Cn\frac{\log\log\log n}{\log\log n}\label{eq:n_over_log_n_diameter}
\end{equation}
with probability $1-o\left(2^{-n}\right)$.
\end{thm}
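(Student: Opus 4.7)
The plan is to combine a birthday-collision argument at the top-level matching $\sigma_{n-1}$ with an inductive ball-growth estimate. Let $r = \lfloor Cn \log\log\log n / (2\log\log n) \rfloor$ throughout. For the reduction to a ball-size bound, consider any pair $u = (\tilde{u}, u_n)$, $v = (\tilde{v}, v_n) \in V(\boldsymbol{G}_n)$ and the balls $S_u := B_{\boldsymbol{G}_{n-1}^{u_n}}(\tilde{u}, r)$ and $S_v := B_{\boldsymbol{G}_{n-1}^{v_n}}(\tilde{v}, r)$ inside the respective instances of $\boldsymbol{G}_{n-1}$. Since $\sigma_{n-1}$ is uniform and independent of $(\boldsymbol{G}_{n-1}^0, \boldsymbol{G}_{n-1}^1)$, and thus of $(S_u, S_v)$, the standard birthday-type bound gives $\p[\sigma_{n-1}(S_u) \cap S_v = \emptyset] \leq \exp(-|S_u||S_v|/2^{n-1})$. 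When the intersection is nonempty, $d_{\boldsymbol{G}_n}(u, v) \leq 2r + 1$ for opposite-side pairs; for same-side pairs (say $u, v \in V^0$), one replaces $S_v$ with $B_{\boldsymbol{G}_{n-1}^1}(\sigma_{n-1}(\tilde v), r)$ (a ball whose center depends on $\sigma_{n-1}$ but whose size is controlled by the ball-size lemma applied to $\boldsymbol{G}_{n-1}^1$), yielding $d_{\boldsymbol{G}_n}(u, v) \leq 2r + 2$. A union bound over the $\leq 4^n$ pairs succeeds with probability $1 - o(2^{-n})$ provided $|S_u|\,|S_v| \geq 3n \cdot 2^{n-1}$ for every pair.

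The crux is therefore the ball-size lemma: with probability $1 - o(2^{-2n})$, every ball $B_{\boldsymbol{G}_{n-1}}(w, r)$ has size at least $2^{(n-1)/2}\sqrt{3n}$. I would prove this by induction on the generation $k$, working from the top down. At generation $k$, for a ball $B^0$ of size $s$ inside a sub-instance $V^0 \subseteq V(\boldsymbol{G}_k)$, the image $\sigma_{k-1}(B^0) \subseteq V^1$ is a uniformly random subset of the same size (since $\sigma_{k-1}$ is independent of the sub-instances, hence of $B^0$), and its $1$-neighborhood in $\boldsymbol{G}_{k-1}^1$ (a graph of maximum degree $k-1$) has size at least $(1 - o(1))\min(2^{k-1}, s(k-1))$ with probability $1 - \exp(-\Omega(s))$, by a standard concentration inequality for random subsets in bounded-degree graphs. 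The key feature that makes iteration possible is that the matchings at different generations are mutually independent, so one can chain such growth steps across generations, each consuming fresh independent randomness.

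The main obstacle will be calibrating this iterative ball-growth argument so that it survives the union bound over all $2^n$ vertices needed in the ball-size lemma. The concentration has a tail of the form $\exp(-\Omega(s))$, so $s$ must be at least polynomial in $n$ before the bound becomes strong enough; this forces the first few growth steps to be slow, and the rate at which one may amplify the ball size per step depends on the worst-case joint distribution of sub-instances. Tracking the bookkeeping carefully, the factor $\log\log\log n / \log\log n$ in the diameter bound should emerge from balancing the achievable per-step expansion against the required concentration strength, across the $O(\log\log n)$ generations needed to bring ball sizes up to $2^{(n-1)/2}\sqrt{3n}$.
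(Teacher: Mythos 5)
Your high-level reduction (balls of radius $r$ around both endpoints, then a birthday-type bound over $\sigma_{n-1}$) is a sensible starting point, and the birthday calculation itself is fine. But the route you take to the ball-size lemma is genuinely different from the paper's, and as written it has a structural gap that I do not think can be repaired without importing the paper's main idea.

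The paper does \emph{not} prove a direct ball-growth lemma. Its proof is an induction on $n$: it branches from $x$ via edges of generations $s,\dots,n-1$ only, producing a family $S_\ell(x)$ of $\binom{n-s-1}{\ell}$ vertices lying in \emph{pairwise disjoint} copies $I_s(z)$ of $\boldsymbol{G}_s$, and then \emph{fills in} each copy using the induction hypothesis $D(\boldsymbol{G}_s)\le Cs\log\log\log s/\log\log s$. This is how the set $U(x)$ reaches size $2^s\abs{S_\ell(x)}\approx 2^{n/2-O(n/\log n)}$ while staying inside a ball of the target radius. The $\log\log\log n/\log\log n$ rate comes precisely from balancing the cost of the branching phase against the recursively-applied diameter bound, and that recursion is what your proposal drops.

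Your substitute --- repeatedly ``apply $\sigma_{k-1}$ to a ball in $V^0$, then take its one-neighborhood inside $\boldsymbol{G}_{k-1}^1$'' --- does not iterate. After one such step the resulting set is the one-neighborhood (in $\boldsymbol{G}_{k-1}^1$) of a random subset; this already uses all the edges of $\boldsymbol{G}_{k-1}^1$, in particular the generation-$(k-2)$ matching inside $V^1$. So the set you carry into the next step already \emph{depends} on $\sigma_{k-2}^{(1)}$, which is exactly the permutation you next want to treat as fresh and independent of your current set. The independence that makes ``$\sigma_{k-1}(B^0)$ is uniform'' valid at the first step is destroyed at the second. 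The paper avoids this by never taking full one-neighborhoods: $M(v,s,t)$ only follows $\sigma$-edges of distinct generations, keeping one representative per disjoint sub-instance, so each permutation is consulted exactly once. Relatedly, your ``$(1-o(1))\min(2^{k-1},s(k-1))$ with probability $1-\exp(-\Omega(s))$'' is stronger than a bounded-differences argument gives (you only get a constant-factor loss at that tail probability, not $1-o(1)$), and more importantly the tail $\exp(-\Omega(s))$ cannot beat the $2^{-\Omega(n)}$ needed for the vertex union bound until $s=\Omega(n)$; you acknowledge the slow early phase but offer no mechanism for it, and without the inductive diameter bound on the small sub-instances there is no obvious substitute. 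Finally, in your same-side case $S_v'=B_{\boldsymbol{G}_{n-1}^1}(\sigma_{n-1}(\tilde v),r)$ is itself a function of $\sigma_{n-1}$, so you must condition on $\sigma_{n-1}(\tilde v)$ before applying the birthday bound to the remaining entries of the permutation --- fixable, but needs saying. In short: the ball-size lemma may well be true, but the proposed proof of it does not go through; the missing ingredient is the induction on $D(\boldsymbol{G}_s)$, which is the heart of the paper's argument.
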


\begin{rem}
The proof of Proposition \ref{prop:trivial_upper_bound} also gives
a simple routing scheme between any two vertices $x,y$: when at $x$,
let $k\in\left[n\right]$ be the largest index such that $x_{k}\neq y_{k}$,
and go along the edge created by $\sigma_{k-1}$. Thus, we always
have a local routing scheme which gives a good approximation to the
diameter, as well as the average distance between pairs of vertices.
Contrast this with general random $n$-regular graphs, where there
is no known local easy way to find an approximation to the minimal
path between two vertices.
\end{rem}

\begin{rem}
It might be possible to improve the factor $\frac{\log\log\log n}{\log\log n}$
in Theorem \ref{thm:small_diameter_general_case} by a more careful
analysis of the quantities $\alpha\left(n\right)$ and $\beta\left(n\right)$
that appear in the theorem's proof. We showed that the diameter of
a random twisted hypercube is asymptotically less than that of the
Boolean hypercube, yet we have no intuition to the correct diameter.
\end{rem}

\subsection{Vertex expansion\label{subsec:vertex_expansion}}

Let $G=\left(V,E\right)$ be any graph. For a set $S\subseteq V$,
let $\partial S$ be its set of neighbors, i.e. $\partial S=\left\{ x\notin S\mid\exists y\in S\text{ such that }\left\{ x,y\right\} \in E\right\} $. 
\begin{defn}[Vertex expander]
Let $0<\eta<1$ and $\alpha>0$, and let $G=\left(V,E\right)$ be
a graph. A set $S\subseteq V$ is said to have \emph{$\alpha$-expansion}
if $\abs{\partial S}\geq\alpha\abs S$. The graph $G$ is an $\left(\eta,\alpha\right)$-vertex-expander
if $S$ has $\alpha$-expansion for all $S\subseteq V$ of size $\abs S\leq\eta\abs V$.
\end{defn}

The hypercube $Q_{n}$ is not an $\left(\eta,\alpha\right)$-vertex-expander
for any constants $\eta,\alpha>0$. To see this, fix some constant
$\eta>0$. There is some $\rho>0$ so that the ball $S=\left\{ x\in\left\{ 0,1\right\} ^{n}\mid\sum x_{i}\leq\ceil{n/2-\rho\sqrt{n}}-1\right\} $
has size $\left(\eta+o\left(1\right)\right)2^{n}$. However, its boundary
is $\partial S=\left\{ x\in\left\{ 0,1\right\} ^{n}\mid\sum x_{i}=\ceil{n/2-\rho\sqrt{n}}\right\} $
and has size at most ${n \choose n/2}=2^{n}\left(1+o\left(1\right)\right)/\sqrt{\pi n}$.
Thus $Q_{n}$ cannot have an expansion factor $\alpha$ asymptotically
larger than $\frac{1}{\sqrt{n}}$ for any constant $\eta$. The random
twisted hypercube graph, on the other hand, achieves constant expansion
with high probability. The following theorem is proved in Section
\ref{subsec:postponed_expansion_proofs}.
\begin{thm}
\label{thm:good_vertex_expander}For every $\eta\in(0,1)$ there exists
a constant $\alpha>0$ such that 
\[
\lim_{n\to\infty}\p[\boldsymbol{G}_{n}\text{ is a }(\eta,\alpha)\text{-vertex expander}]=1.
\]
\end{thm}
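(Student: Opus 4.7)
The plan is induction on the generation $n$, with the topmost random matching $\sigma_{n-1}$ handling large sets and the inductive hypothesis taking care of balanced splits of smaller ones.

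\textbf{Setup.} Write $\boldsymbol{G}_n=\twist{\boldsymbol{G}_{n-1}^0}{\boldsymbol{G}_{n-1}^1}{\sigma_{n-1}}$. For $S\subseteq V(\boldsymbol{G}_n)$ with $|S|=s$, put $S^i=S\cap V^i$, $s_i=|S^i|$, and $N'=2^{n-1}$. The cross-matching alone contributes
\[
|\partial S|\ \geq\ |\sigma_{n-1}(S^0)\setminus S^1|\ +\ |\sigma_{n-1}^{-1}(S^1)\setminus S^0|\ =\ s-2X,
\]
where $X=|\sigma_{n-1}(S^0)\cap S^1|$ is hypergeometric. Since $s\leq 2\eta N'$, its mean is $\e[X]=s_0 s_1/N'\leq s^2/(4N')\leq \eta s/2$, hence $\e[s-2X]\geq (1-\eta)s$.

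\textbf{Induction and case analysis.} Fix $\eta\in(0,1)$ and a small $\alpha=\alpha(\eta)>0$ to be chosen at the end. I would prove by induction on $n$ that $\boldsymbol{G}_n$ is $(\eta,\alpha)$-expanding with probability $1-o(1)$. Assuming this for $\boldsymbol{G}_{n-1}^0$ and $\boldsymbol{G}_{n-1}^1$, consider any $S$ with $s\leq \eta N$ and split into two cases. \emph{Case A} ($\max(s_0,s_1)\leq \eta N'$): apply the inductive expansion to each side, obtaining disjoint pieces $\partial_{V^i}S^i\subseteq V\setminus S$, hence $|\partial S|\geq \alpha(s_0+s_1)=\alpha s$. \emph{Case B} ($\max(s_0,s_1)>\eta N'$, so $s>\eta N/2$): apply a sharp concentration inequality for the hypergeometric $X$ (multiplicative Chernoff or Bernstein) to obtain $\p[X\geq (1-\alpha)s/2]\leq \exp(-cs)$ for some $c=c(\eta,\alpha)>0$, and union-bound over the $\binom{N}{s}$ candidate sets of size $s$. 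The induction closes as long as the combined failure probability at generation $n$ satisfies $p_n\leq 2p_{n-1}+\exp(-\Omega(N))$. The base case of small $n$ is handled by the trivial degree bound $|\partial S|\geq s(1-(s-1)/n)$ available in any $n$-regular graph.

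\textbf{Main obstacle.} The heart of the argument is Case B: the concentration rate $c$ must beat the union-bound entropy $H(s/N)/(s/N)$, which for $s/N\in[\eta/2,\eta]$ is a constant depending on $\eta$. A naive Hoeffding inequality yields $c\propto (1-\eta-\alpha)^2$, which is too small once $\eta$ is not tiny. The resolution is to use the sharper multiplicative Chernoff bound for the hypergeometric together with an $\alpha=\alpha(\eta)$ chosen small enough that $c$ exceeds $H(\eta/2)/(\eta/2)$; if this is still not enough, one can exploit the independence of the matchings across generations, whose joint contribution supplies further $\exp(-\Omega(s))$ factors in the bound on $\p[\partial S\subseteq T]$ and absorbs the entropy term in the union bound.
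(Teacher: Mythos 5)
Your plan has two genuine gaps, the first of which is structural and by itself fatal to the induction as formulated.

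\textbf{The recursion does not close.} In Case~A you appeal to the inductive hypothesis for both subinstances $\boldsymbol{G}_{n-1}^{0}$ and $\boldsymbol{G}_{n-1}^{1}$; since the theorem is asserted for an \emph{arbitrary} joint distribution of the two instances, the best you can say is that Case~A fails with probability at most $2p_{n-1}$, where $p_{m}=\p[\boldsymbol{G}_{m}\text{ is not an }(\eta,\alpha)\text{-expander}]$. Combined with the Case~B failure probability $q_{n}$ this gives
\[
p_{n}\leq 2p_{n-1}+q_{n}\leq 2^{n-n_{0}}p_{n_{0}}+\sum_{k=n_{0}+1}^{n}2^{n-k}q_{k}.
\]
The second term is at least $2^{n-n_{0}-1}q_{n_{0}+1}$, a fixed positive number times $2^{n}$, so the right-hand side diverges as $n\to\infty$ for any fixed $n_{0}$; and one cannot take $n_{0}\to\infty$ with $n$ because $p_{n_{0}}=0$ is needed for the first term to vanish, which the trivial degree bound only supplies for sets of size $O(n_{0})$, not up to $\eta 2^{n_{0}}$. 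So no choice of base case rescues the recursion: the factor of $2$ per generation (even $1$ per generation, as in the duplicube) accumulates faster than the per-generation failure $q_{k}$ can compensate.

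\textbf{Case~B as written cannot beat the union bound, and the proposed fix is the missing heart of the proof, not a patch.} You correctly sense that a single-level hypergeometric tail bound against $\binom{N}{s}$ fails for $\eta$ that is not small. A concrete calculation confirms this: with $s\approx\eta N$ and $s_{0}\approx s_{1}\approx s/2$, the relevant deviation of $X=|\sigma_{n-1}(S^{0})\cap S^{1}|$ has decay exponent proportional to $N'\,D_{\mathrm{KL}}(\alpha\,\|\,1-\eta)\approx N'(1-\eta)$ as $\alpha\to0$, while the union bound costs $N H(\eta)\approx N(1-\eta)\log\tfrac{1}{1-\eta}$; the latter wins whenever $\log\tfrac{1}{1-\eta}>\tfrac{1}{2}$, i.e.\ roughly for $\eta>0.4$. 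Your suggestion to ``exploit the independence of the matchings across generations'' is the right instinct but, if one simply multiplies the failure probabilities of $\sigma_{n-1},\sigma_{n-2},\ldots$ against the same $\binom{N}{s}$ count, the gap does not close either. What is actually needed --- and what the paper supplies --- is a \emph{counting} statement, not just an independence statement: Proposition~\ref{prop:number_of_badly_matched_pairs} shows that the number of sets $S$ whose four grandchildren pairs $S_{ij0},S_{ij1}$ are all badly matched under $\sigma_{n-3}$ is roughly $2^{\frac{N}{2}(1+\delta)H(s/N)}$, i.e.\ only the square root of $\binom{N}{s}$. Union-bounding \emph{this reduced family} against the independent badly-matched probabilities at levels $n-1$ and $n$ (Proposition~\ref{prop:probability_of_badly_matched}, total $\approx2^{-\frac{3N}{4}(1-\delta)H(s/N)}$) yields the needed $2^{-\Omega(N H(s/N))}$. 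The paper thus avoids induction entirely: it is a one-shot union bound over $|S|=j$, using the three outermost matchings, with the badly-matched counting as the key new ingredient your proposal does not contain.

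A minor further note: your Case~A itself is fine once the probabilistic bookkeeping is done on the event that both subinstances are expanders, but it inherits the recursion problem above.
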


In fact, the proof of Theorem \ref{thm:good_vertex_expander} shows
that $\p[\boldsymbol{G}_{n}\text{is not a }(\eta,\alpha)\text{-vertex expander}]=O(2^{-cn})$
for some constant $c>0$ that depends on $\eta$.
\begin{rem}
It is also possible to talk about edge expanders, and compare the
size of a set $S$ to the number of edges connecting it to $\partial S$.
Both $Q_{n}$ and $G_{n}$ are not very good edge expanders (for any
choice of permutations $\sigma_{k}$); see Section \ref{sec:open_questions}
for more details.
\end{rem}

\begin{rem}
In random $d$-regular graphs, balls of any constant radius $r$ around
an individual vertex are trees with high probability (even when $d$
is logarithmic in the number of vertices). Such sets are very poorly
connected -- the vertex expansion is of order $1/d^{r}$ (consider
cutting the $d$-ary tree in half at the central vertex). However,
in a random twisted hypercube, a ball of radius $r$ contains $G_{r}$
as a subgraph, which, by the theorem above, has good vertex expansion
with arbitrarily high probability for large $r$.
\end{rem}

\subsection{Eigenvalues}

Let $A\in\r^{m\times m}$ be a symmetric matrix, whose eigenvalues
are $\lambda_{1}\geq\lambda_{2}\geq\ldots\geq\lambda_{m}$. Let $\mu^{A}:=\frac{1}{m}\sum_{i=1}^{m}\delta_{\lambda_{i}}$
be the uniform measure over the eigenvalues of $A$, where $\delta_{s}$
is the Dirac-delta distribution centered at $s$. 

Let $\mathrm{Adj}\left(Q_{n}\right)$ be the adjacency matrix of the
hypercube $Q_{n}$. The $2^{n}$ eigenvalues and eigenvectors of $\mathrm{Adj}\left(Q_{n}\right)$
are well understood; the following is well known \cite[Section 1.4.6]{brouwer_haemers_spectra_of_graphs}.
\begin{fact}
\label{fact:eigenvalues_of_hypercube}For every integer $d\in\left[0,n\right]$,
the adjacency matrix $\mathrm{Adj}\left(Q_{n}\right)$ has eigenvalue
$n-2d$ with multiplicity ${n \choose d}$.
\end{fact}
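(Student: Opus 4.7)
The plan is to diagonalize $\mathrm{Adj}(Q_n)$ directly using the Fourier/character basis of $\mathbb{F}_2^n$, which is the standard way to handle Cayley-type graphs on abelian groups. Identify $V(Q_n) = \{0,1\}^n$ with the group $\mathbb{F}_2^n$, and for each $S \subseteq [n]$ define the character $\chi_S : \{0,1\}^n \to \{\pm 1\}$ by
\[
\chi_S(x) = (-1)^{\sum_{i \in S} x_i}.
\]
The $2^n$ characters $\{\chi_S\}_{S \subseteq [n]}$ form an orthogonal basis of $\r^{\{0,1\}^n}$, so it suffices to show that each $\chi_S$ is an eigenvector of $\mathrm{Adj}(Q_n)$ with eigenvalue $n - 2|S|$.

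The key computation is straightforward: the neighbors of $x$ in $Q_n$ are exactly $x + e_i$ for $i = 1, \ldots, n$, where $e_i$ is the $i$-th standard basis vector. Hence
\[
\bigl(\mathrm{Adj}(Q_n)\,\chi_S\bigr)(x) = \sum_{i=1}^n \chi_S(x + e_i) = \chi_S(x) \sum_{i=1}^n \chi_S(e_i),
\]
using the multiplicativity $\chi_S(x+e_i) = \chi_S(x)\chi_S(e_i)$. Now $\chi_S(e_i) = -1$ if $i \in S$ and $+1$ otherwise, so the inner sum equals $(n - |S|) - |S| = n - 2|S|$. Therefore $\chi_S$ is an eigenvector with eigenvalue $n - 2|S|$.

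Finally, count multiplicities: the eigenvalue $n - 2d$ arises precisely from the characters $\chi_S$ with $|S| = d$, of which there are $\binom{n}{d}$. Summing $\sum_{d=0}^n \binom{n}{d} = 2^n$ accounts for the full spectrum, completing the proof. There is no real obstacle here — the only thing to be careful about is verifying that the characters indeed span $\r^{\{0,1\}^n}$, which follows from standard orthogonality of characters on the abelian group $\mathbb{F}_2^n$. (An alternative route is to use $Q_n = Q_1 \square Q_{n-1}$ together with the fact that eigenvalues of a Cartesian product are sums of eigenvalues of the factors, and induct on $n$; the spectrum of $Q_1$ is $\{+1,-1\}$ and one gets sums of $n$ such signs, recovering the same count.)
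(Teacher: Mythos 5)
Your proof is correct and is the standard character-theoretic diagonalization of $\mathrm{Adj}(Q_n)$ viewed as a Cayley graph on $\mathbb{F}_2^n$; the computation $(\mathrm{Adj}(Q_n)\,\chi_S)(x) = (n-2|S|)\chi_S(x)$ and the multiplicity count $\binom{n}{d}$ for $|S|=d$ are both right. The paper, however, does not prove this statement at all -- it labels it a ``Fact'' and cites \cite[Section 1.4.6]{brouwer_haemers_spectra_of_graphs}, so there is no in-paper argument to compare against. Either of the two routes you sketch (characters, or the Cartesian-product decomposition $Q_n = Q_1 \square Q_{n-1}$ with eigenvalue additivity) is a legitimate and standard way to fill in what the paper delegates to a reference.
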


In particular, the hypercube's largest eigenvalue is $n$, while its
second largest eigenvalue is $n-2$. Thus, its normalized spectral
gap, defined as $\frac{1}{n}\left(\lambda_{1}-\lambda_{2}\right)$,
is $\frac{2}{n}$. The same gap is achieved for the graphs $G_{n}$,
regardless of the choice of $\permvector$. The following proposition
is proved in Section \ref{subsec:postponed_eigenvalue_proofs}.
\begin{prop}
\label{prop:bad_gap_for_duplimatching}Let $A_{n}$ be the adjacency
matrix of $G_{n}$. Then $\lambda_{1}=n$ and $\lambda_{2}=n-2$. 
\end{prop}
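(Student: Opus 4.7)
The plan is to establish both equalities by induction on $n$, exploiting the block form~(\ref{eq:matrix_next_step})
\[
A_n = \begin{pmatrix} A_{n-1}^0 & P \\ P^T & A_{n-1}^1 \end{pmatrix},
\]
where $P$ is the permutation matrix corresponding to $\sigma_{n-1}$ (any choice will do, since the statement is distribution-free). The base case $n=1$ is immediate: a single edge has spectrum $\{1,-1\}$. Since $G_n$ is $n$-regular, $A_n\one = n\one$ and hence $\lambda_1=n$; a quick induction shows $G_n$ is connected (two connected copies of $G_{n-1}$ joined by a perfect matching stay connected), so the top eigenvalue is simple.

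For the lower bound $\lambda_2\geq n-2$, I would exhibit the explicit eigenvector $v=(\one,-\one)^T$. Using the inductive fact that $A_{n-1}^i\one=(n-1)\one$ together with $P\one=P^T\one=\one$, direct block multiplication yields $A_n v=(n-2)v$.

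The heart of the proof is the upper bound $\lambda_2\leq n-2$. Assume inductively that $\lambda_2(A_{n-1}^i)\leq n-3$ for $i=0,1$, and let $x=(u,w)^T\perp\one$. Decompose $u=\alpha\one+u'$ and $w=\beta\one+w'$ with $u',w'\perp\one$; the orthogonality of $x$ to $\one$ forces $\alpha+\beta=0$. Because $P$ is a permutation it fixes $\one$ and preserves $\one^\perp$, and the same holds for each $A_{n-1}^i$. Consequently the Rayleigh quotient
\[
\langle A_n x,x\rangle = \langle A_{n-1}^0 u,u\rangle + \langle A_{n-1}^1 w,w\rangle + 2\langle Pw,u\rangle
\]
splits cleanly into a $\one$-piece and an $\one^\perp$-piece with no cross-terms. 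Using $\alpha=-\beta$, the $\one$-piece contributes $\bigl[(n-1)(\alpha^2+\beta^2)+2\alpha\beta\bigr]\cdot 2^{n-1}=(n-2)\alpha^2\cdot 2^n$. By the inductive hypothesis on $A_{n-1}^i$ and by $\norm{P}_{\mathrm{op}}=1$, the $\one^\perp$-piece is at most $(n-3)(\norm{u'}^2+\norm{w'}^2)+2\norm{u'}\norm{w'}\leq(n-2)(\norm{u'}^2+\norm{w'}^2)$. Summing and using $\norm{x}^2=\alpha^2\cdot 2^n+\norm{u'}^2+\norm{w'}^2$ gives $\langle A_n x,x\rangle\leq(n-2)\norm{x}^2$, as desired.

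The one point requiring care is the clean separation of the Rayleigh quotient into $\one$ and $\one^\perp$ parts; this works precisely because $P$ is a permutation matrix, which guarantees that $\one^\perp$ is simultaneously invariant under $A_{n-1}^0$, $A_{n-1}^1$, and the off-diagonal block $P$. Once this is in place the bound is mechanical, and the explicit eigenvector from the previous step shows that equality holds.
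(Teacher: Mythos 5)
Your proof is correct and follows the same essential strategy as the paper's: $n$-regularity gives $\lambda_1=n$, the signed indicator of the last coordinate provides an eigenvector with eigenvalue $n-2$, and the upper bound $\lambda_2\leq n-2$ follows by induction on the Rayleigh quotient of the block matrix together with $2\norm{h_0}\norm{h_1}\leq\norm{h_0}^2+\norm{h_1}^2$. The only organizational difference is that the paper restricts directly to $h\perp f_n,g_n$ (so that $h_0,h_1\perp\one$ and the induction hypothesis applies immediately), whereas you work on the larger space $\one^\perp$ and decompose each half $u,w$ into a constant part (the $g_n$-direction, handled by explicit computation) plus an orthogonal part (handled by induction). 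These are the same argument in two equivalent presentations; yours has the minor advantage of showing the Rayleigh-quotient bound on all of $\one^\perp$ at once rather than invoking the $g_n$-eigenvalue separately at the end.
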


A consequence of Fact \ref{fact:eigenvalues_of_hypercube} is that
$\mu^{\mathrm{Adj}\left(Q_{n}\right)}$ is the probability measure
of a $\left\{ \pm1\right\} $ Binomial random variable with $n$ trials
and success probability $1/2$. By the central limit theorem, we then
have that 
\[
\mu^{\mathrm{Adj}\left(Q_{n}\right)/\sqrt{n}}\to\Gamma
\]
weakly, where $\Gamma$ is the standard Gaussian distribution on $\r$.
Unlike the spectral gap, this property is not preserved for the random
twisted hypercube graph. In fact, the spectrum of $A=\mathrm{Adj}\left(\boldsymbol{G}_{n}\right)$
behaves like that of a random $n$-regular graph. 
\begin{thm}
\label{thm:eigenvalues_follow_semicircle_law}Let $\mu_{n}=\mu^{A/\sqrt{n}}$.
Then the random measure $\mu_{n}$ converges weakly to the semicircle
law $\circlaw$ in probability, i.e. the absolutely continuous measure
whose probability density function is 
\[
f_{\text{circ}}\left(x\right)=\begin{cases}
\frac{2}{4\pi^{2}}\sqrt{4-x^{2}} & x\in\left[-2,2\right],\\
0 & x\notin\left[-2,2\right].
\end{cases}
\]
\end{thm}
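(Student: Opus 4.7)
My plan is to prove Theorem \ref{thm:eigenvalues_follow_semicircle_law} via the method of moments. Since the semicircle law $\circlaw$ has compact support, it is determined by its moments, and to establish weak convergence in probability it suffices to show that for every fixed $k\in\n$,
\[
\e\!\left[\int x^{k}\,d\mu_{n}(x)\right]\xrightarrow{n\to\infty}\int x^{k}\,d\circlaw(x)\quad\text{and}\quad\var\!\left[\int x^{k}\,d\mu_{n}(x)\right]\xrightarrow{n\to\infty}0.
\]
Writing $m_{k}(\mu_{n})=\frac{1}{2^{n}n^{k/2}}\tr(A^{k})$, the trace counts closed walks of length $k$ in $\boldsymbol{G}_{n}$. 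Since each vertex of $\boldsymbol{G}_{n}$ has exactly one neighbor in each generation $j\in[n]$, a closed walk starting at a vertex $v$ is uniquely encoded by a sequence of generations $(j_{1},\ldots,j_{k})\in[n]^{k}$ along which the successive moves return to $v$.

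To handle the first moment, I partition closed walks into \emph{tree walks}---those in which every traversed edge is used exactly twice, in opposite directions, and the traversed edges form a tree---and \emph{non-tree walks}. For even $k=2\ell$, the combinatorial shapes of tree walks are in bijection with non-crossing pair-partitions of $\{1,\ldots,2\ell\}$, contributing the Catalan number $C_{\ell}$; for each shape and each starting vertex $v$, choosing a generation for each of the $\ell$ tree edges yields $(1-o(1))n^{\ell}$ admissible assignments (after accounting for the mild distinctness constraint along root-to-leaf paths). Summed over the $2^{n}$ starting vertices, tree walks contribute $(1+o(1))\cdot 2^{n}n^{\ell}C_{\ell}$ to $\e[\tr(A^{2\ell})]$, yielding the semicircle moment $C_{\ell}$ after normalization. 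For odd $k$ the graph $\boldsymbol{G}_{n}$ is in fact bipartite: the highest generation $k_{\max}$ appearing in a closed walk must be used an even number of times (since coordinate $k_{\max}$ is flipped only by generation-$k_{\max}$ edges), and iterating this observation forces the walk length to be even. Thus closed walks of odd length do not exist, and the odd moments vanish identically, matching those of $\circlaw$.

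The main obstacle is bounding the contribution of non-tree walks---closed walks in which some edge is used more than twice, or whose trace contains a cycle. For each such walk, closure imposes a nontrivial equation on the random permutations $\sigma_{k}$. The plan is to induct on the highest generation appearing in the walk: conditional on the permutations of higher generations, the fresh randomness of a low-generation permutation reduces the closure probability by a factor of at least $\Omega(n^{-1})$ per cycle, which overcomes the combinatorial overcount and yields an $o(n^{\ell})$ contribution in expectation. The duplicube case is more delicate because the same $\sigma_{k}$ is shared throughout the graph; however, for fixed $k$ each walk makes only $O_{k}(1)$ queries to any particular $\sigma_{j}$, so a switch-and-decouple argument for random permutations still controls the coincidence probability. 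Finally, the variance bound is obtained by a parallel analysis of $\e[\tr(A^{k})^{2}]$ as a sum over pairs of closed walks: pairs of disjoint tree walks produce the squared expectation, while pairs that share edges, generations, or form joint cycles contribute only lower-order corrections, giving $\var(m_{k}(\mu_{n}))=o(1)$.
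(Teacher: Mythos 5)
Your overall strategy (method of moments, identifying Catalan numbers from tree walks, and controlling non-tree walks by exploiting the randomness of the $\sigma_k$) is the same one the paper uses. But there is a genuine error in your handling of the odd moments.

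You claim that $\boldsymbol{G}_n$ is bipartite and therefore the odd moments vanish identically. This is false, and the paper explicitly observes the opposite (item 13 of the open questions: $\boldsymbol{G}_n$ is, with high probability, not bipartite). Your parity argument breaks at the iteration step: it is true that the highest-generation edges appearing in a closed walk must be used an even number of times, since only they flip the corresponding coordinate. But a generation-$j$ edge does not merely flip coordinate $j$: it also acts by $\sigma_{j-1}$ on \emph{all} coordinates $1,\dots,j-1$. So once you have used a generation-$j^{*}$ edge, the lower coordinates are scrambled, and you cannot conclude that the next-highest generation is used an even number of times. Hence there is no bipartition, odd closed walks do exist, and what you actually need is that their normalized contribution is $o(1)$ (not identically zero). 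This is exactly the kind of control the paper's Lemma~\ref{lem:small_number_of_cycles} supplies: it bounds, for each vertex, the number of short cycles in its $k$-neighborhood, which is then combined with McKay's walk-counting in (almost) acyclic neighborhoods to show that both odd moments tend to $0$ and even moments tend to $C_{k/2}$.

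Your sketch of the non-tree bound (``closure imposes a nontrivial equation on the $\sigma_k$'' and a proposed induction on the highest generation) points in the right direction but is too vague to substitute for the paper's argument, which concretely conditions on the high-probability event that every vertex in $B(v,k)$ lies in at most $C m_0^{k+1}$ short cycles, with $m_0 = \Theta(\log n)$. You would need to make precise what the ``switch-and-decouple'' argument is and verify that it yields a quantitative $o(n^{k/2})$ bound after summing over all non-tree walk shapes, including those of odd length. I would suggest replacing the bipartiteness claim with a cycle-counting lemma of the type in the paper, and treating odd and even $k$ uniformly.
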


The above theorem follows from the following lemma, which states that
the number of short cycles in the neighborhood of any vertex in $\boldsymbol{G}_{n}$
is small. Essentially, this means that $\boldsymbol{G}_{n}$ is almost
locally treelike. For a vertex $v$ and positive integer $k$, let
$\theta\left(v,k\right)$ denote the number of cycles of length no
more than $k$ containing $v$, and $B\left(v,k\right)$ denote the
ball of radius $k$ around $v$.
\begin{lem}
\label{lem:small_number_of_cycles}Let $v\in V_{n}$ and let $k>0$
be an integer. There exists a constant $C>0$ which depends only on
$k$ such that the following holds. Let $m_{0}>0$ be an integer,
and let 

\[
F_{v}=\bigcup_{u\in B\left(v,k\right)}\left\{ \theta\left(u,k\right)\leq Cm_{0}^{k+1}\right\} .
\]
 Then
\[
\p\left[F_{v}\right]\geq1-C2^{-m_{0}}m_{0}^{2k+2}n^{2k+1}.
\]
\end{lem}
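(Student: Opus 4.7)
The plan is to reduce $\p[F_v^c]$ to a single-vertex tail bound via the union bound $\p[F_v^c] \leq \sum_{u \in B(v,k)} \p[\theta(u,k) > Cm_0^{k+1}]$, and then control each summand separately. (I read the $\bigcup$ in the statement as an intersection, since only then is the claim nontrivial.) Since $\boldsymbol{G}_n$ is $n$-regular, $\abs{B(v,k)} \leq 2n^k$, so it suffices to establish the single-vertex estimate $\p[\theta(u,k) > Cm_0^{k+1}] \leq C' 2^{-m_0} m_0^{2k+2} n^{k+1}$ for every $u \in B(v,k)$.

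For the single-vertex estimate I would split cycles of length $\leq k$ through $u$ by the maximum generation $j^*$ of any edge they use. A cycle with $j^* \leq m_0$ lies entirely inside the copy of $G_{m_0}$ containing $u$, which is $m_0$-regular on $2^{m_0}$ vertices; since the number of closed walks of length $\leq k$ from a fixed vertex in any $m_0$-regular graph is at most $k m_0^k \leq m_0^{k+1}$, the contribution from such cycles is deterministically bounded by $m_0^{k+1}$. Thus it suffices to bound the random variable $\theta_{>m_0}(u)$ counting cycles of length $\leq k$ through $u$ that use at least one edge of generation exceeding $m_0$.

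To bound $\theta_{>m_0}(u)$, I would enumerate cycles by their direction sequences $(d_1, \ldots, d_\ell) \in [n]^\ell$: since direction-$j$ edges flip only the $j$-th coordinate, each direction must appear an even number of times in a closed walk, so such a sequence uses at most $\lfloor k/2 \rfloor$ distinct directions, giving at most $\binom{n}{\lfloor k/2 \rfloor} k! = O_k(n^{k/2})$ patterns. For a fixed pattern with max direction $j^*$, I would reveal the matchings in order of generation; conditional on $\sigma_0, \ldots, \sigma_{j^*-2}$, the closure of the walk at $u$ imposes one constraint per use of direction $j^*$ on the corresponding (independent) copy of $\sigma_{j^*-1}$, each of probability at most $2/2^{j^*-1}$. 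Summing the resulting closure probabilities over all patterns and over $j^* > m_0$ gives $\e[\theta_{>m_0}(u)] = O_k(n^{k/2-1} 2^{-m_0})$; an analogous computation for ordered $m$-tuples of distinct patterns with $m \leq m_0$ would yield factorial moment bounds from which, combined with Markov's inequality and Stirling's approximation, the desired $2^{-m_0} m_0^{2k+2} n^{k+1}$ tail follows.

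The main obstacle will be the higher-moment version of the probabilistic calculation for high-generation patterns: two distinct cycles can share edges and hence share matching constraints, so the factorial moment $\e[\binom{\theta_{>m_0}(u)}{m}]$ is not simply the $m$-th power of the one-cycle expectation. Carefully bookkeeping the overlapping constraints among the $m$-tuples of cycles --- distinguishing how many matching constraints are truly independent on each copy of each $\sigma_{j^*-1}$ --- is the core technical difficulty, and is where the polynomial factor $m_0^{2k+2}$ in the tail bound should emerge.
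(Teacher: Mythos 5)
Your high-level decomposition --- split short cycles through $u$ by the largest generation $j^*$ they use, bound the low-generation contribution deterministically inside the copy of $G_{m_0}$ containing $u$, control the high-generation contribution probabilistically, and union-bound over $u\in B(v,k)$ --- is the same skeleton the paper uses. But two of the steps you build on are genuinely flawed. The parity claim is false for twisted hypercubes: a generation-$j$ edge does not merely flip coordinate $j$, it sends $(x_1,\ldots,x_{j-1},0,\ldots)$ to $(\sigma_{j-1}(x_1,\ldots,x_{j-1}),1,\ldots)$, scrambling the entire prefix. Only the \emph{maximum} generation appearing on a closed walk is forced to occur an even number of times; lower generations obey no parity constraint. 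For a concrete counterexample in $G_3$, take $\sigma_1$ to be the flip and $\sigma_2$ the $4$-cycle $00\to 01\to 10\to 11\to 00$; then $000\to 011\to 101\to 010\to 100\to 000$ is a simple $5$-cycle with direction sequence $(3,2,3,2,1)$, in which direction $1$ appears once. This kills the $O_k(n^{k/2})$ pattern count (though since the walk stays inside $I_{j^*}(u)$, counting all of $[j^*]^\ell$ patterns is still affordable). The other estimate, ``one constraint per use of direction $j^*$, each of probability $2/2^{j^*-1}$,'' is also unjustified: all uses of generation $j^*$ interrogate the same partially-revealed permutation, so those events do not factor. The paper carefully uses only a \emph{single} probabilistic constraint --- that the last generation-$j^*$ edge of the cycle lands back in the small set $B_{<j^*}(u,k)$.

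More importantly, you explicitly leave the higher-moment computation undone, calling it ``the core technical difficulty''; as written, this is a gap, not a proof. But the whole apparatus is unnecessary. Since $\theta_{\le m_0}(u)\le Ck\,m_0^{k}$ holds deterministically, the event $\{\theta(u,k)>C'm_0^{k+1}\}$ (for $C'$ large enough) already forces $\theta_{>m_0}(u)\ge 1$, so Markov on the \emph{first} moment, $\p[\theta_{>m_0}(u)\ge 1]\le\e[\theta_{>m_0}(u)]$, is all that is required --- no factorial moments, no Stirling. This is precisely what the paper's proof does: it shows that with the stated probability there is \emph{no} cycle of length at most $k$ through any $u$ near $v$ that uses an edge of generation exceeding $m_0$ at all, via the single-constraint bound above, and then the deterministic low-generation count finishes the argument. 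If you drop the parity reduction, replace the multiplied constraints by the single last-edge constraint, and discard the higher-moment step, your plan becomes essentially the paper's proof.
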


Theorem \ref{thm:eigenvalues_follow_semicircle_law} and Lemma \ref{lem:small_number_of_cycles}
are proven in Section \ref{subsec:postponed_eigenvalue_proofs}.
\begin{rem}
A classical theorem by McKay \cite{mckay_eigenvalues_for_large_regular_graphs}
states that a regular graph on $N$ vertices has a limiting semicircle
law if, for every $k$, the number of $k$-cycles in the graph is
$o\left(N\right)$. This result cannot be directly used in the case
of the twisted hypercube: for example, each vertex is guaranteed to
be in a $4$-cycle, so there are at least $N/4$ $4$-cycles in every
twisted hypercube (in fact, we conjecture that for the duplicube,
for every $k$, each vertex is in a constant number of $k$-cycles
in expectation). Lemma \ref{lem:small_number_of_cycles} is the main
technical component in our proof of Theorem \ref{thm:eigenvalues_follow_semicircle_law}.
\end{rem}

\subsection{Asymmetry of $\boldsymbol{G}_{n}$}

Let $G=\left(V,E\right)$ be any graph. A function $\varphi:V\to V$
is called an \emph{automorphism }of $G$ if $\left\{ x,y\right\} \in E\iff\left\{ \varphi\left(x\right),\varphi\left(y\right)\right\} \in E$.
The set of all automorphisms of a graph is denoted by $\aut\left(G\right)$,
and always contains the trivial automorphism -- the identity function
$\mathrm{Id}$. 

It is well known that for the hypercube, $\abs{\aut\left(Q_{n}\right)}=n!2^{n}$,
and every automorphism $\varphi\left(x\right)$ is of the form $\varphi\left(x_{1},\ldots,x_{n}\right)=\left(x_{\pi\left(1\right)}+b_{1},\ldots,x_{\pi\left(n\right)}+b_{n}\right)$
for some permutation $\pi\in S_{n}$ and $b\in\left\{ 0,1\right\} ^{n}$.
On the other hand, a random regular graph of degree $n$ on $2^{n}$
vertices is almost surely asymmetric, i.e. almost surely has no non-trivial
automorphisms \cite[Corollary 3.5]{mckay_wormald_automorphisms}.
This is also true for random twisted hypercubes.
\begin{thm}
\label{thm:asymmetry_of_g_n}
\[
\p\left[\aut\left(\boldsymbol{G}_{n}\right)=\left\{ \mathrm{Id}\right\} \right]=1-O(n^{2}2^{-n/20}).
\]
\end{thm}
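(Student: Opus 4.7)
I would prove the theorem by induction on $n$, conditioning on the two subgraphs $\boldsymbol{G}_{n-1}^0, \boldsymbol{G}_{n-1}^1$ (each asymmetric with high probability by the inductive hypothesis) and using only the randomness of the final matching $\sigma_{n-1}$. Let $V_0=\{x:x_n=0\}$ and $V_1=\{x:x_n=1\}$, so the induced subgraphs on $V_i$ are copies of $\boldsymbol{G}_{n-1}^i$ and the edges produced by $\sigma_{n-1}$ form a perfect matching $M$ between $V_0$ and $V_1$. Any automorphism $\varphi\in\aut(\boldsymbol{G}_n)$ falls into one of three types according to how it interacts with the bipartition $V_0\cup V_1$: (a) $\varphi$ preserves the bipartition, (b) $\varphi$ swaps it, or (c) $\varphi$ mixes the two parts.

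In case (a), the restriction $\varphi|_{V_i}$ is an automorphism of $\boldsymbol{G}_{n-1}^i$. By the inductive hypothesis both restrictions equal $\mathrm{Id}$, so $\varphi=\mathrm{Id}$ and this case contributes nothing beyond the inductive error. In case (b), $\varphi|_{V_0}$ and $\varphi|_{V_1}$ are graph isomorphisms between the two halves, and preservation of the cross-edges forces the algebraic identity $\sigma_{n-1}\circ\varphi|_{V_1}\circ\sigma_{n-1}=\varphi|_{V_0}$ under the natural identification of $V_0$ with $V_1$. The number of admissible isomorphism pairs $(\varphi|_{V_0},\varphi|_{V_1})$ is bounded (using the inductive rigidity of each $\boldsymbol{G}_{n-1}^i$), and for each fixed pair the probability that a uniformly random perfect matching satisfies the required fixed-point / conjugation relation is bounded by $O(2^{-n/20})$; a union bound over admissible pairs then handles this case.

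The principal obstacle is case (c). To rule it out, I would show that with probability $1-O(n\cdot 2^{-n/20})$ the matching $M$ is the unique perfect matching in $\boldsymbol{G}_n$ whose removal decomposes the graph into two $(n-1)$-regular subgraphs isomorphic to $\boldsymbol{G}_{n-1}^0,\boldsymbol{G}_{n-1}^1$ (up to interchange). Since any automorphism $\varphi$ maps $M$ to such a matching, this uniqueness forces $\varphi(V_0)\in\{V_0,V_1\}$ and therefore excludes case (c). The uniqueness is in turn proved by a union bound over potential alternative decompositions: each alternative matching would impose an algebraic condition on the random $\sigma_{n-1}$ of probability $O(2^{-n/20})$, and structural considerations (such as the bounded number of ways a random matching can coincide with a pre-specified pattern of cross-edges between the fixed instances $\boldsymbol{G}_{n-1}^0,\boldsymbol{G}_{n-1}^1$) cap the enumeration at $O(n)$ candidates per level. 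Combining the three cases and iterating the induction yields $\p[\aut(\boldsymbol{G}_n)\neq\{\mathrm{Id}\}]=O(n^2 2^{-n/20})$; the $n^2$ factor arises from summing the per-level error (already of order $n\cdot 2^{-n/20}$ due to the candidate-matching union bound) over the $n$ inductive steps. The hardest step in making this rigorous is the structural-uniqueness argument in case (c), which requires a careful probabilistic accounting of how a uniform matching can accidentally fit a non-canonical decomposition of $\boldsymbol{G}_n$.
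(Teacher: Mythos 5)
Your three-way case split (preserve / swap / mix the halves) matches the paper's decomposition into $Y_n$, $W_n$, $Z_n$, and your idea for ruling out mixing automorphisms---that a mixing automorphism would produce a second partition of $V_n$ into two pieces joined by a perfect matching---is indeed the content of the paper's Lemma~\ref{lem:no_partitions_with_matchings}. However, the inductive framework you build around cases~(a) and~(b) has a genuine gap: it does not close.

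In case~(a) you write that ``this case contributes nothing beyond the inductive error,'' so your recursion has the form $\mathrm{err}(n)\leq \mathrm{err}(n-1) + (\text{contributions from (b),(c)})$. But the target bound $\mathrm{err}(n)=O(n^2 2^{-n/20})$ shrinks by only a factor $2^{-1/20}$ per step, so $\mathrm{err}(n-1)=O((n-1)^2 2^{-(n-1)/20})=2^{1/20}\cdot O((n-1)^2 2^{-n/20})$ already exceeds $n^2 2^{-n/20}$ for large $n$, no matter what exponentially small terms you add from cases (b), (c). More fundamentally, \emph{any} inductive statement of the form ``$\p[\text{nontrivial automorphism of }\boldsymbol{G}_n]\leq a_n$'' with $a_n\to 0$ cannot be proved by a recursion that includes $\mathrm{err}(n)\geq\mathrm{err}(n-1)$ as a summand. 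The paper sidesteps this entirely: it never invokes the inductive assertion that $\boldsymbol{G}_{n-1}$ is asymmetric. Instead it conditions on the event $F^c$ (no mixing automorphism at any level $m\in[n/20,n-1]$), which is controlled once and for all by Lemma~\ref{lem:no_partitions_with_matchings} with a geometric sum in $m$, and under $F^c$ it obtains a \emph{deterministic} upper bound $X_{n-1}\leq 2^{n+\frac{1}{20}n2^{n/20}}$ on the number of candidate automorphism pairs via equation (\ref{eq:bounding_under_no_second_kind}). This bound is exponentially large but not catastrophic.

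The second missing ingredient is the strength of the per-pair probability estimate. You claim each admissible pair $(\varphi_0,\varphi_1)$ is killed by $\sigma_{n-1}$ with probability $O(2^{-n/20})$. That is nowhere near enough once $X_{n-1}$ is allowed to be as large as $2^{O(n 2^{n/20})}$, and it is also far weaker than what is true. The paper proves $\p_\sigma[\varphi_0=\sigma\varphi_1\sigma]\leq 2^{-(n-7)2^{n/3}}$ for the swap case by iteratively conditioning on the value of $\sigma$ at $2^{n/3}$ points, and for the preserve case it must handle the identity (which satisfies $\sigma^{-1}\mathrm{Id}\,\sigma=\mathrm{Id}$ with probability $1$) by splitting off $Y_n=1$ and proving Claim~\ref{claim:conjugacies-are-large}: under $F^c$, every nontrivial automorphism has at least $2^{19n/20}$ non-fixed points and hence conjugacy class of size at least $2^{\frac{1}{4}n2^{n/4}}$, making $\p_\sigma[\varphi_0=\sigma^{-1}\varphi_1\sigma]\leq 2^{-\frac{1}{4}n2^{n/4}}$. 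Both the super-exponential per-pair bound and the conjugacy-class lower bound are essential, and neither appears in your outline. Finally, your heuristic enumeration of ``$O(n)$ candidate matchings'' in case~(c) is not substantiated; the actual proof of Lemma~\ref{lem:no_partitions_with_matchings} is its own induction with a delicate three-way case analysis on how a candidate cut interacts with the generation-$(n-1)$ cuts, together with a de Moivre--Laplace estimate relating the random-balanced-partition model to the binomial model.
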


The proof of Theorem \ref{thm:asymmetry_of_g_n} is found in Section
\ref{subsec:postponed_asymmetry_proofs}.

\subsection{Different base graphs}

The twisted hypercube graph is the result of repeatedly applying the
$\sigma$-twist operation on a single vertex. It is also possible
to start with any base graph $G_{0}=H$, and define $G_{n}^{H}=\twist{G_{n-1}^{0}}{G_{n-1}^{1}}{\sigma_{n-1}}$,
where $\sigma_{n-1}$ is a permutation on $2^{n-1}\abs{V\left(H\right)}$
vertices. When each $\sigma_{k}$ is a uniformly random permutation
on $2^{k}\abs{V\left(H\right)}$ elements, we denote the resulting
random graph by $\boldsymbol{G}_{n}^{H}$. In this case we say that
$\boldsymbol{G}_{n}^{H}$ is a random twisted hypercube with base
graph $H$. None of the main results concerning the diameter, expansion,
and eigenvalues are severely affected. This is because as $n\to\infty$,
the vast majority of the edges meeting each vertex are those created
by the $\sigma$-twist operation.
\begin{lem}
Let $H$ be a finite connected graph. Let $\boldsymbol{G}_{n}^{H}$
be random twisted hypercube with base graph $H$. Then there exists
a random twisted hypercube $\boldsymbol{G}_{n}$ and a coupling $(\boldsymbol{G}_{n}^{H},\boldsymbol{G}_{n})$
such that:
\end{lem}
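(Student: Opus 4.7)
The plan is to construct the coupling by jointly sampling the permutations defining the two graphs, and then transfer each of the main theorems from $\boldsymbol{G}_n$ to $\boldsymbol{G}_n^H$ via a small, $H$-dependent correction.

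Fix a vertex $h_0\in V(H)$. Identify $V(\boldsymbol{G}_n^H)=\{0,1\}^n\times V(H)$, where the $\{0,1\}^n$ coordinate labels which of the $2^n$ copies of $H$ a vertex belongs to (ordered as they appear in the recursive construction), and set $V(\boldsymbol{G}_n)=\{0,1\}^n$. This yields an embedding $\iota(v)=(v,h_0)$ and a cell projection $\pi(v,h)=v$. For the coupling, sample the uniform random permutations $\sigma_k\in\mathrm{Sym}(V(H)\times\{0,1\}^k)$ defining $\boldsymbol{G}_n^H$, and independently sample uniform permutations $\tau_k\in\mathrm{Sym}(\{0,1\}^k)$ defining $\boldsymbol{G}_n$; both graphs then live on a common probability space with the correct marginals. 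A tighter coupling obtained from $\sigma_k$ via a canonical extraction --- for example, the first-return map on the $h_0$-fiber $\{h_0\}\times\{0,1\}^k$ --- also produces a uniform $\tau_k$ (by induction on $|V(H)|-1$, noting that removing one non-fiber point preserves the uniform distribution), but it does not translate into a useful graph-theoretic relation, since iterates of $\sigma_k$ do not correspond to paths in $\boldsymbol{G}_n^H$. I would therefore proceed with the independent coupling.

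With the coupling in hand, each property listed in the lemma corresponds to a deterministic or probabilistic comparison of the two graphs. Deterministically, every vertex $(v,h)\in V(\boldsymbol{G}_n^H)$ has degree $n+\deg_H(h)\le n+|V(H)|-1$, the base graph contributes $2^n|E(H)|$ edges total, and every path decomposes into $\sigma_k$-twist edges and base edges internal to individual copies of $H$. Probabilistically, Proposition~\ref{prop:trivial_upper_bound} applied to the $\sigma_k$-matchings alone gives $D(\boldsymbol{G}_n^H)\le n+D(H)$, and the arguments of Theorems~\ref{thm:small_diameter_duplicube}~and~\ref{thm:small_diameter_general_case} transfer with the same constants, since they are insensitive to the finite blow-up of the vertex set by $V(H)$. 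Theorem~\ref{thm:good_vertex_expander} adapts analogously: any set $S\subseteq V(\boldsymbol{G}_n^H)$ of size $\eta\cdot 2^n|V(H)|$ projects under $\pi$ to a set $\pi(S)$ of size at least $\eta\cdot 2^n$, to which the expansion argument applies; lifting the boundary back through $\pi$ loses at most a factor of $|V(H)|$.

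The main obstacle will be the spectral transfer for Theorem~\ref{thm:eigenvalues_follow_semicircle_law}. Write $A(\boldsymbol{G}_n^H)=A_{\mathrm{twist}}+I_{2^n}\otimes A(H)$, where $A_{\mathrm{twist}}$ accounts for the $\sigma_k$-edges and the second summand for the base-graph edges. The perturbation has operator norm bounded by $|V(H)|$, which is $o(\sqrt{n})$, but weak convergence of the empirical spectral measure does not follow from operator-norm perturbation alone. Instead, I would re-prove Lemma~\ref{lem:small_number_of_cycles} for $\boldsymbol{G}_n^H$: short cycles in $\boldsymbol{G}_n^H$ decompose into alternating sequences of $\sigma_k$-twist arcs and $H$-paths of bounded length, and the same union bound used in the proof of Lemma~\ref{lem:small_number_of_cycles} gives the required count up to an additional constant-in-$n$ multiplicative factor from $H$. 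The asymmetry claim (Theorem~\ref{thm:asymmetry_of_g_n}) follows from a similar dichotomy: a non-trivial automorphism of $\boldsymbol{G}_n^H$ either induces a non-trivial permutation of cells (ruled out by the asymmetry of the induced $\sigma_k$-twist graph) or acts non-trivially within some cell, which is controlled by a union bound over the $2^n$ cells using $|\aut(H)|<\infty$.
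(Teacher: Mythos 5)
Your proposal has a fundamental gap in the choice of coupling. You sample the permutations $\tau_k$ defining $\boldsymbol{G}_n$ independently of the permutations $\sigma_k$ defining $\boldsymbol{G}_n^H$, and explicitly reject a tighter (extraction-based) coupling. But item 1 of the lemma asserts the \emph{pointwise} inequality $D(\boldsymbol{G}_n^H)\leq D(H)\,D(\boldsymbol{G}_n)$ on the coupled probability space, which is a deterministic comparison of the two realizations. If the two graphs are independent, this inequality fails with positive probability: $\boldsymbol{G}_n$ could come out with atypically small diameter while $\boldsymbol{G}_n^H$ simultaneously has large diameter, and nothing ties them together. An independent coupling therefore carries no information and cannot prove item 1. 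Relatedly, the bound you derive, $D(\boldsymbol{G}_n^H)\leq n+D(H)$, is not item 1 at all; it is only the analogue of the trivial Proposition~\ref{prop:trivial_upper_bound} and does not let the sharper diameter bounds of Theorems~\ref{thm:small_diameter_duplicube} and~\ref{thm:small_diameter_general_case} transfer.

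The paper's proof constructs a genuine dependent coupling by extracting $\pi_k$ from $\sigma_k$: contracting each copy of $H$ in the two halves of $\boldsymbol{G}_{k+1}^H$ to a single vertex turns the $\sigma_k$-edges into a $|V(H)|$-regular bipartite multigraph on $\{0,1\}^k\sqcup\{0,1\}^k$, which by Hall's (K\H{o}nig's) theorem contains a perfect matching; that matching defines a permutation $\pi_k$ of $\{0,1\}^k$, chosen uniformly among the valid matchings, and the symmetry of the construction forces $\pi_k$ to be uniform whenever $\sigma_k$ is. Because every $\pi_k$-edge lifts to an actual $\sigma_k$-edge of $\boldsymbol{G}_n^H$ between the corresponding $H$-copies, the resulting $\boldsymbol{G}_n$ embeds as a subgraph of the quotient of $\boldsymbol{G}_n^H$ obtained by contracting $H$-copies, so a length-$d$ path in $\boldsymbol{G}_n$ lifts to a length-at-most-$d\cdot D(H)$ path in $\boldsymbol{G}_n^H$. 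This gives item 1, and items 2 and 3 are immediate since the extraction of $\pi_k$ uses only $\sigma_k$. You were right that a naive extraction (your first-return map on the $h_0$-fiber) gives no useful graph-theoretic comparison --- but the conclusion should have been to search for a better extraction, not to abandon dependence altogether.

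The remaining paragraphs of your plan (vertex expansion, semicircle law, asymmetry) discuss transferring those results, but the lemma in question is concerned only with the diameter comparison; the paper handles those transfers in separate corollaries with their own proof sketches, so that material, while reasonable as a roadmap, is not part of proving this lemma.
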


\begin{enumerate}
\item $D\left(\boldsymbol{G}_{n}^{H}\right)\leq D\left(H\right)D\left(\boldsymbol{G}_{n}\right).$
\item If the permutations that define $\boldsymbol{G}_{n}^{H}$ are independent
then so are the permutations that define $\boldsymbol{G}_{n}$.
\item If $\boldsymbol{G}_{n}^{H}$ is the duplicube with base graph $H$,
then $\boldsymbol{G}_{n}$ is also a duplicube.
\end{enumerate}
\begin{proof}[Proof sketch]
Consider the $\sigma$-twist operation $\boldsymbol{G}_{k+1}^{H}=\twist{\boldsymbol{G}_{k}^{H,0}}{\boldsymbol{G}_{k}^{H,1}}{\sigma_{k}}$.
By contracting each copy of $H$ in $\boldsymbol{G}_{k+1}^{H}$ to
a single vertex and using Hall's marriage theorem, there exists a
set $S$ of $2^{k}$ edges induced by $\sigma_{k}$ which comprise
a perfect matching between the copies of $H$ in the two instances
of $\boldsymbol{G}_{k}^{H}$. Such a set $S$ naturally induces a
permutation on $\left\{ 0,1\right\} ^{k}$. For a given $\sigma_{k}$,
let $\pi_{k}$ be chosen uniformly at random among all such induced
permutations. Then if $\sigma_{k}$ is chosen uniformly then $\pi_{k}$
is a uniform random permutation on $\left\{ 0,1\right\} ^{k}$. We
can use these permutations to generate a graph $\boldsymbol{G}_{n}$
which is coupled with $\boldsymbol{G}_{n}^{H}$ so that $\boldsymbol{G}_{n}$
is a subgraph of the graph obtained by contracting every copy of $H$
in $\boldsymbol{G}_{n}^{H}$ to a single vertex. Thus $D\left(\boldsymbol{G}_{n}^{H}\right)\leq D\left(H\right)D\left(\boldsymbol{G}_{n}\right)$.
\end{proof}
Thus, both Theorem \ref{thm:small_diameter_duplicube} and Theorem
\ref{thm:small_diameter_general_case} continue to hold with only
a constant-factor change in the diameter. 

\begin{rem}
If $H$ is not connected, one may simply apply the $\sigma$-twist
operation several times first until $\boldsymbol{G}_{n}^{H}$ is connected
(this can be shown to happen with probability tending to $1$ as $n\to\infty$),
then use that as the base graph. 
\end{rem}

\begin{cor}[Corollary to Theorem \ref{thm:good_vertex_expander}]
Let $H$ be a finite graph. For every $\eta\in(0,1)$ there exists
a constant $\alpha>0$ such that the 
\[
\lim_{n\to\infty}\p[\boldsymbol{G}_{n}\text{is a }(\eta,\alpha)\text{-vertex expander}]=1.
\]
\end{cor}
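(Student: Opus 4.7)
The plan is to reduce the corollary (which I read as stating that $\boldsymbol{G}_n^H$, rather than $\boldsymbol{G}_n$, is the expander) to Theorem \ref{thm:good_vertex_expander} via the coupling furnished by the preceding lemma. Set $c=\abs{V(H)}$, so that $\abs{V(\boldsymbol{G}_n^H)} = c\cdot 2^n$. First, I will apply Theorem \ref{thm:good_vertex_expander} with the same $\eta$, producing a constant $\alpha_0 > 0$ and a graph $\boldsymbol{G}_n$, coupled with $\boldsymbol{G}_n^H$ via the lemma, that is an $(\eta,\alpha_0)$-vertex expander with probability $1-o(1)$. My goal will then be to show that on this event, $\boldsymbol{G}_n^H$ is itself an $(\eta,\alpha)$-vertex expander for $\alpha := \alpha_0/(c(1+\alpha_0))$.

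Given any $S\subseteq V(\boldsymbol{G}_n^H)$ with $\abs S \leq \eta c 2^n$, I will partition the $2^n$ copies of $H$ in $\boldsymbol{G}_n^H$ into three classes according to their relation to $S$: full copies $F$ (entirely inside $S$), partial copies $P$ (meeting both $S$ and $S^c$), and empty copies $E$ (disjoint from $S$). Two disjoint sources of $\partial S$-vertices then arise. Each partial copy contributes at least one vertex of $\partial S$ because $H$ is connected, so any nontrivial partition of its vertex set is bridged by an edge. Moreover, since $\abs F \leq \abs S/c \leq \eta 2^n$, the expansion of $\boldsymbol{G}_n$ yields $\abs{\partial_{\boldsymbol{G}_n}F} \geq \alpha_0 \abs F$, and for each copy $v\in(\partial_{\boldsymbol{G}_n}F)\cap E$ the coupling produces a genuine $\boldsymbol{G}_n^H$-edge from some vertex of a full copy into $v$, contributing to $\partial S$ a vertex of $v$.

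Because partial and empty copies are disjoint, the two contributions can be added to give
\begin{equation*}
\abs{\partial S} \;\geq\; \abs P + \abs{(\partial_{\boldsymbol{G}_n}F)\cap E} \;\geq\; \abs P + (\abs{\partial_{\boldsymbol{G}_n}F}-\abs P) \;\geq\; \alpha_0 \abs F.
\end{equation*}
Combining this with $\abs S \leq c\abs F + c\abs P \leq (c/\alpha_0)\abs{\partial S} + c\abs{\partial S}$ and rearranging yields the desired $\abs{\partial S} \geq \alpha\abs S$.

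The main technical point to pin down will be the coupling itself: I need the lemma to guarantee that every edge of $\boldsymbol{G}_n$ is realized as an actual edge of $\boldsymbol{G}_n^H$ between the two relevant copies of $H$, as this is what transports expansion from $\boldsymbol{G}_n$ onto $\boldsymbol{G}_n^H$. The only other loose end is disconnected $H$, but this is handled by the remark preceding the corollary: one performs a few initial $\sigma$-twist steps until the resulting graph is connected (which occurs with probability tending to $1$) and uses it as the base instead, changing $\alpha$ only by a constant factor.
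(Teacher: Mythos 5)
Your proof is correct, and it takes a genuinely different route from the paper's. The paper's proof is a single remark: the argument for Theorem \ref{thm:good_vertex_expander} only ever touches the edges created by the last three $\sigma$-twist operations, and those edges are uniformly random matchings (on $2^{n-1}\abs{V(H)}$ elements, etc.) regardless of the base graph, so the whole proof of the theorem applies verbatim to $\boldsymbol{G}_n^H$ with $N$ replaced by $\abs{V(H)}2^n$. Your argument instead invokes the preceding coupling lemma and does a clean black-box reduction: decompose the $2^n$ copies of $H$ into full/partial/empty relative to $S$, observe that $\abs F\leq\abs S/c\leq\eta 2^n$ so Theorem \ref{thm:good_vertex_expander} applies directly to $F$ in the coupled $\boldsymbol{G}_n$, and combine the disjoint contributions from partial copies (via connectivity of $H$) and from copies in $(\partial_{\boldsymbol G_n}F)\cap E$ (via the coupling, which guarantees each $\boldsymbol{G}_n$-edge is realized by a genuine $\boldsymbol{G}_n^H$-edge between the two relevant $H$-copies). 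The chain $\abs{\partial S}\geq\abs{P}+\bigl(\abs{\partial_{\boldsymbol G_n}F}-\abs P\bigr)\geq\alpha_0\abs F$, together with $\abs{\partial S}\geq\abs P$ and $\abs S\leq c\abs F+c\abs P$, gives $\alpha=\alpha_0/(c(1+\alpha_0))$ exactly as you state.

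The trade-offs: your reduction is more modular and doesn't require re-reading the proof of Theorem \ref{thm:good_vertex_expander} to confirm which generations of edges it touches; on the other hand, it relies on the coupling lemma, for which the paper only supplies a proof sketch, and it loses a factor of roughly $1/c$ in the expansion constant, whereas the paper's approach would likely give an $\alpha$ closer to the one for $\boldsymbol{G}_n$ itself. You also need $H$ connected for the partial-copy argument; you correctly note that the disconnected case is handled by the preceding remark (apply a few initial $\sigma$-twists first). Both proofs are valid; yours is a legitimately cleaner way to phrase the dependence on the base graph, at the cost of a worse constant.
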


The proof of the above corollary is essentially identical to that
of Theorem \ref{thm:good_vertex_expander}. The latter only uses the
edges created by the last three $\sigma$-twist operations, and so
the statement still holds for $\boldsymbol{G}_{n}^{H}$ as well.

\begin{cor}[Corollary to Theorem \ref{thm:eigenvalues_follow_semicircle_law}]
Let $H$ be a finite graph. Let $\mu_{n}=\mu^{A\left(\boldsymbol{G}_{n}^{H}\right)/\sqrt{n}}$.
Then $\mu_{n}$ converges weakly to the semicircle law $\circlaw$
in probability, i.e. the absolutely continuous measure whose probability
density function is 
\[
f_{\text{circ}}\left(x\right)=\begin{cases}
\frac{2}{4\pi^{2}}\sqrt{4-x^{2}} & x\in\left[-2,2\right]\\
0 & x\notin\left[-2,2\right].
\end{cases}
\]
\end{cor}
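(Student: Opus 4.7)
The plan is to reduce to Theorem \ref{thm:eigenvalues_follow_semicircle_law} by treating the contribution of the base graph $H$ as a bounded-norm perturbation of the spectrum, and then verifying that the residual ``matching-only'' graph satisfies the same local tree-like property that drives the original proof. Let $m = |V(H)|$.

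First, write $A(\boldsymbol{G}_n^H) = M_n + B_n$, where $B_n$ is the block-diagonal matrix consisting of $2^n$ copies of $A(H)$ placed along the diagonal (capturing the edges internal to each copy of $H$), and $M_n$ is the adjacency matrix of the graph formed by the union of the $n$ matchings produced by the $\sigma$-twist operations. Since $\|B_n\|_{\mathrm{op}} \leq m$ is a constant independent of $n$, Weyl's inequality implies that the ordered eigenvalues of $A(\boldsymbol{G}_n^H)$ and $M_n$ differ by at most $m$, so the Lévy distance between the empirical spectral measures of $A(\boldsymbol{G}_n^H)/\sqrt{n}$ and $M_n/\sqrt{n}$ is $O(1/\sqrt{n})$. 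It therefore suffices to prove that $\mu^{M_n/\sqrt{n}}$ converges weakly to $\circlaw$ in probability.

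The graph encoded by $M_n$ is $n$-regular on $m \cdot 2^n$ vertices, and is constructed by $n$ successive uniformly random matchings according to exactly the same recursive prescription as $\boldsymbol{G}_n$, except that the base consists of $m$ isolated vertices rather than a single one. Consequently, the proof of Lemma \ref{lem:small_number_of_cycles} applies essentially verbatim to $M_n$: the probabilistic estimates depend only on the uniformity of the permutations $\sigma_k$ and on the vertex counts at each scale, which are now $m \cdot 2^k$ instead of $2^k$. This enlarges every constant by a factor polynomial in $m$, which is irrelevant since $m$ is fixed. Hence with probability $1 - o(1)$ the number of cycles of any fixed length $k$ in $M_n$ is $o(m \cdot 2^n)$, and McKay's theorem \cite{mckay_eigenvalues_for_large_regular_graphs}, invoked exactly as in the original proof of Theorem \ref{thm:eigenvalues_follow_semicircle_law}, delivers the semicircle limit for $\mu^{M_n/\sqrt{n}}$.

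The main obstacle is the transfer of Lemma \ref{lem:small_number_of_cycles} to this enlarged base. The heart of that proof, however, is an exploration argument showing that a freshly sampled matching edge rarely closes a short cycle with the edges exposed so far, and this argument is local in nature and insensitive to the ambient size of the graph beyond a constant factor. One should carefully re-run the induction to confirm that the exponents and polynomial factors are preserved when $1$ is replaced by $m$ at the base level, but no genuinely new idea is required; the resulting failure probability still tends to zero at the same rate, up to factors that depend on $H$ but not on $n$.
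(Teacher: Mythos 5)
Your Weyl-inequality reduction is clean and valid: $A(\boldsymbol{G}_n^H) = M_n + B_n$ with $\|B_n\|_{\mathrm{op}} \leq m$, so after normalizing by $\sqrt{n}$ the ordered eigenvalues of the two matrices are uniformly $O(1/\sqrt{n})$ apart and the empirical measures have the same weak limit. This is simpler than the paper's trace/Von-Neumann/Cauchy-Schwarz moment comparison, and it avoids the paper's simplifying assumption that $|V(H)|$ is a power of two. So the first half of your argument is a genuinely different (and in some ways cleaner) route.

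The gap is in the second half, where you invoke McKay's theorem for $M_n$. You assert that ``with probability $1-o(1)$ the number of cycles of any fixed length $k$ in $M_n$ is $o(m\cdot 2^n)$,'' but this is false, and the paper explicitly warns against it in the remark following Lemma \ref{lem:small_number_of_cycles}. The offending cycles are the short ones created by the first few generations of the recursion, not by $H$. In $M_2^H$, the union of the $\sigma_0$- and $\sigma_1$-matchings is a $2$-regular graph on $4m$ vertices, i.e.\ a disjoint union of cycles, and with probability bounded away from zero it contains $4$-cycles. In the duplicube these are replicated $2^{n-2}$ times; in the independent model, a constant fraction of the $2^{n-2}$ copies of $M_2^H$ contain $4$-cycles by the law of large numbers. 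Either way the number of $4$-cycles in $M_n$ is $\Theta(2^n) = \Theta(N/m)$, which is \emph{not} $o(N)$ for fixed $m$. So McKay's criterion fails for $M_n$ just as it fails for $\boldsymbol{G}_n$. Lemma \ref{lem:small_number_of_cycles} gives only a per-vertex polylogarithmic bound on cycle counts, not a global $o(N)$ bound, which is precisely why the paper re-runs the full moment/walk-counting argument of Theorem \ref{thm:eigenvalues_follow_semicircle_law} instead of citing McKay's theorem as a black box. To complete your proof you would need to reproduce that moment argument for $M_n$, not just the lemma; alternatively, you could follow the paper and couple $\boldsymbol{G}_n^H$ with $\boldsymbol{G}_{n+d}$ (for $|V(H)| = 2^d$), which reduces to a graph already covered by Theorem \ref{thm:eigenvalues_follow_semicircle_law} and requires no reproving at all.
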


\begin{proof}[Proof sketch]
We will assume for simplicity that $\abs{V\left(H\right)}=2^{d}$
for some integer $d$. We can couple $\boldsymbol{G}_{n}^{H}$ with
$\boldsymbol{G}_{n+d}$ by observing that $\boldsymbol{G}_{n+d}=\boldsymbol{G}_{n}^{\boldsymbol{G}_{d}}$,
and using the same permutations $\sigma_{k}$ for $\boldsymbol{G}_{n}^{H}$
and $\boldsymbol{G}_{n}^{\boldsymbol{G}_{d}}$. Since all the edges
due to the permutations are the same for $\boldsymbol{G}_{n}^{H}$
and $\boldsymbol{G}_{n}^{\boldsymbol{G}_{d}}$, their adjacency matrices
differ by no more than $c:=\abs{V\left(H\right)}$ entries at each
row, and all the eigenvalues of the matrix $\Delta=\mathrm{Adj}\left(\boldsymbol{G}_{n}^{H}\right)-\mathrm{Adj}\left(\boldsymbol{G}_{n}^{\boldsymbol{G}_{d}}\right)$
are bounded by $c$. Denoting $A:=\mathrm{Adj}\left(\boldsymbol{G}_{n}^{\boldsymbol{G}_{d}}\right)$
, for every integer $k>0$ we have
\begin{align*}
\abs{\sum_{i=1}^{2^{n+d}}\lambda_{i}\left(\boldsymbol{G}_{n}^{\boldsymbol{G}_{d}}\right)^{k}-\sum_{i=1}^{2^{n+d}}\lambda_{i}\left(\boldsymbol{G}_{n}^{H}\right)^{k}} & =\abs{\tr\left(A^{k}\right)-\tr\left(\left(A+\Delta\right)^{k}\right)}\\
 & =\abs{\tr\left(P\left(A,\Delta\right)\right)},
\end{align*}
where $P$ is a polynomial of degree $k$ for which in every monomial,
$A$ has total degree at most $k-1$. By Von Neumann's trace inequality
\cite[eq H.10]{marshall_inequalities}, if $A_{1},\ldots,A_{m}$ are
$N\times N$ symmetric matrices, then 
\[
\sum_{i=1}^{N}\lambda_{i}\left(A_{1}\cdots A_{m}\right)\leq\sum_{i=1}^{N}\lambda_{i}\left(A_{1}\right)\cdots\lambda_{i}\left(A_{m}\right),
\]
and so the trace of every monomial in $P$ is bounded above by $c^{k}\sum_{i=1}^{2^{n+d}}\abs{\lambda_{i}\left(A\right)^{k-1}}$.
Thus the difference in the normalized moments of $\boldsymbol{G}_{n}^{\boldsymbol{G}_{d}}$
and $\boldsymbol{G}_{n}^{H}$ is bounded by
\begin{align*}
\frac{\left(n+d\right)^{-k/2}}{2^{n+d}}\abs{\sum_{i=1}^{2^{n+d}}\lambda_{i}\left(\boldsymbol{G}_{n}^{\boldsymbol{G}_{d}}\right)^{k}-\sum_{i=1}^{2^{n+d}}\lambda_{i}\left(\boldsymbol{G}_{n}^{H}\right)^{k}} & \leq C\left(k\right)\frac{\left(n+d\right)^{-k/2}}{2^{n+d}}\sum_{i=1}^{2^{n+d}}\abs{\lambda_{i}\left(A\right)^{k-1}}\\
\left(\text{Cauchy-Schwarz}\right) & \leq C\left(k\right)\sqrt{\frac{1}{2^{n+d}}n^{-1}\sum_{i}\lambda_{i}\left(A\right)^{2}}\sqrt{\frac{1}{2^{n+d}}n^{-\left(k-1\right)}\sum_{i}\lambda_{i}\left(A\right)^{2k-4}}.
\end{align*}
By the proof of Theorem \ref{thm:eigenvalues_follow_semicircle_law},
$\frac{1}{2^{n+d}}n^{-\left(k-2\right)}\sum_{i}\lambda_{i}\left(A\right)^{2k-4}$
converges to a constant in probability as $n\to\infty$, which means
that the sum on the right-hand side above converges to $0$ in probability.
This implies that the $k$-th moments of the empirical distribution
of the eigenvalues of $\boldsymbol{G}_{n}^{H}$ converge to those
of the semicircle law.
\end{proof}
\begin{cor}[Corollary to Theorem \ref{thm:asymmetry_of_g_n}]
\label{cor:asymmetry-different-base-graph}Let $H$ be a finite graph.
Then 
\[
\lim_{n\to\infty}\p\left[\aut\left(\boldsymbol{G}_{n}^{H}\right)=\left\{ \mathrm{Id}\right\} \right]=1-O\left(n^{2}2^{-\frac{n}{20}}\right).
\]
\end{cor}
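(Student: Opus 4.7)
The plan is to adapt the proof of Theorem \ref{thm:asymmetry_of_g_n} to the setting of a general base graph $H$. The randomness that drives the asymmetry in that theorem -- namely, the uniformity of the matching $\sigma_{n-1}$ at the top level of the $\sigma$-twist construction -- is present also for $\boldsymbol{G}_n^H$, acting now on the larger ground set of size $|V(H)| \cdot 2^{n-1}$.

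I would first decompose $\boldsymbol{G}_n^H = \twist{\boldsymbol{G}_{n-1}^{H,0}}{\boldsymbol{G}_{n-1}^{H,1}}{\sigma_{n-1}}$ and condition on the two sub-instances. A non-trivial automorphism $\varphi$ of $\boldsymbol{G}_n^H$ induces, by its action on the top-level bipartition $\{V^0, V^1\}$, either a pair of automorphisms of each half, or a pair of isomorphisms between the halves. In either case the pair must satisfy a commutation relation with $\sigma_{n-1}$ (of the form $\psi_1 \circ \sigma_{n-1} = \sigma_{n-1} \circ \psi_0$, or the analogous swap). For any fixed such pair, the probability that a uniformly random permutation on $|V(H)| \cdot 2^{n-1}$ elements satisfies this relation is exponentially small in $2^{n-1}$. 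A union bound over candidate pairs -- controlled inductively by $|\aut(\boldsymbol{G}_{n-1}^{H,i})|$ -- then yields the claimed probability estimate, with the same asymptotic bound $O(n^2 2^{-n/20})$ as in Theorem \ref{thm:asymmetry_of_g_n}: enlarging the ground set by the constant factor $|V(H)|$ only strengthens the per-pair probability, while the induction on $|\aut(\boldsymbol{G}_k^H)|$ must be tightened to account for $\aut(H)$ acting internally on each copy of $H$.

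The main obstacle is precisely controlling $|\aut(\boldsymbol{G}_k^H)|$ during the induction, especially when $H$ is highly symmetric: a naive bound would allow $|\aut(H)|^{2^k}$ independent internal actions, causing the union bound to blow up. The resolution is to observe that consistent simultaneous internal actions across copies of $H$ are heavily constrained by the random matchings $\sigma_j$ at lower levels, so that the effective number of candidate automorphisms remains at most polynomial in $k$ with high probability. This feeds back into the inductive hypothesis that $\boldsymbol{G}_k^H$ is asymmetric with probability $1 - O(k^2 2^{-k/20})$, closing the argument.
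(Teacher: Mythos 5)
Your overall strategy — peeling off the top-level twist and comparing $\varphi$'s action on the two halves, then using the rarity of permutation relations to drive a union bound — is the same high-level approach the paper takes, and you correctly observe that enlarging the ground set by the constant factor $\abs{V(H)}$ only strengthens the per-pair probability.

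There is, however, a genuine gap: you implicitly assume that every automorphism of $\boldsymbol{G}_n^H$ either preserves or swaps the top bipartition. This is false a priori, and ruling out the third kind of automorphism — one that sends a proper subset $A_0 \subsetneq V^0$ into $V^1$ while keeping $V^0 \setminus A_0$ inside $V^0$ — is where most of the work, and the entire $O(n^2 2^{-n/20})$ error term, actually lives. In the paper this is the content of Lemma \ref{lem:no_partitions_with_matchings} (such a $\varphi$ would force the edges across the nonstandard cut to form a matching, and this is unlikely), and the event $F$ collects these "bad" partitions across all generations $m \in [n/20, n-1]$, which is exactly what produces the $n^2 2^{-n/20}$ bound. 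Without this step your argument cannot reach the stated rate: the commutation-relation bound you sketch would only give the much faster $2^{-\Omega(n 2^{n/3})}$ decay, and would anyway fail to cover all automorphisms.

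Conversely, the difficulty you flag about controlling $\abs{\aut(\boldsymbol{G}_k^H)}$ when $H$ is highly symmetric is not a real obstruction. The paper never inductively bounds the automorphism count tightly; it conditions on $F^c$ to get that every automorphism at generations $[n/20, n-1]$ preserves or swaps the halves, and then uses the completely crude bound $X_{n/20} \leq \left(\abs{V(H)} 2^{n/20}\right)!$ on the number of automorphisms at the base level. That factorial is of size $2^{O(n 2^{n/20})}$, and the commutation-relation probability is $2^{-\Omega(n 2^{n/3})}$, so the union bound has enormous slack and no cleverness about internal $\aut(H)$-actions is needed. The hand-waving resolution in your second paragraph is therefore both unnecessary and not something you could make precise without the matching lemma anyway.
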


We omit the proof of Corollary \ref{cor:asymmetry-different-base-graph}
since it is similar to the proof of Theorem \ref{thm:asymmetry_of_g_n}.

\section{Remarks and further directions\label{sec:open_questions}}
\begin{enumerate}
\item In \cite{zhu_hypercube_variant_with_small_diameter}, Zhu gives a
simple-to-define, deterministic sequence of permutations $\bar{\sigma}=\left(\sigma_{k}\right)_{k=1}^{\infty}$
for which the twisted hypercube has asymptotically optimal diameter.
What can be said about the expansion, asymmetry, and eigenvalues of
this construction? If these properties differ from those of a random
twisted hypercube, find a deterministic construction for which they
agree.
\item Theorem \ref{thm:small_diameter_duplicube} shows that the random
duplicube has diameter $\frac{n}{\log_{2}n}(1+o(1)).$ Is it true
that for all random twisted cubes the same result holds with high
probability?
\item Given a sequence of permutations $\bar{\sigma}=\left(\sigma_{k}\right)_{k=1}^{\infty}$,
is there a good local routing scheme for the duplicube $G_{n}\left(\bar{\sigma}\right)$
that gives a better approximation than Proposition \ref{prop:trivial_upper_bound}
to the shortest path between two vertices?
\item The twisted-hypercube model can be readily extended to $d$-dimensional
hypergraphs: at every step, create $d$ instances of the current hypergraph,
and connect the vertices of the $d$ instances by a perfect matching
of $d$-hyperedges. What can be said about the resultant hypergraph? 
\item The graph $G_{n}$ is, in general, not a good edge-expander. One reason
for this are cuts across the matchings $\sigma_{k}$ for large $k$.
For example, the two instances of $G_{n-1}$ in $G_{n}$ each have
$2^{n-1}$ vertices, and are connected by $2^{n-1}$ edges, giving
an isoperimetric ratio of $1$. This is not so large for a graph whose
degree is $n$. What can we say about the geometric properties of
a set with small edge boundary? For $Q_{n}$ it is known that sets
that have small edge expansion are similar to subcubes \cite{ellis2011almost}.
Do non-expanding sets in $\boldsymbol{G}_{n}$ have similar structure?
\item We show that with high probability $\boldsymbol{G}_{n}$ is a good
vertex expander. However, to our knowledge there is no efficient way
to verify that a given graph is a vertex expander: assuming the Small-Set-Expansion
Hypothesis, it is hard to even approximate the vertex expansion of
a graph in polynomial time \cite{louis_raghavendra_vempala_vertex_expansion_hardness}.
Is it possible to exploit the structure of the twisted hypercube to
verify this property in time $\mathrm{poly}\left(2^{n}\right)$?
\item By using the same coupon-collector argument as for the hypercube,
the mixing time of the lazy simple random walk of any twisted hypercube
is $O\left(n\log n\right)$. On the other hand, if an $n$-th generation
edge is never refreshed, then the random walk stays constrained to
one half of the graph, and so the mixing time must also be $\omega\left(n\right)$.
What is the mixing time for the lazy simple random walk on $\boldsymbol{G}_{n}$?
Is it $o\left(n\log n\right)$ with high probability?
\item Is it possible to remove edges from $\boldsymbol{G}_{n}$ and obtain
a (near) constant-degree graph, while maintaining good vertex expansion?
Is it possible to approach the vertex expansion of a constant-degree
random regular graph in this way?
\item Replace every vertex of $Q_{n}$ by an $n$-cycle, obtaining a graph
$CCC_{n}$; this is known as the cube-connected-cycle \cite{preparata_vuillemin_ccc}.
As $n\to\infty$, it is well known that $CCC_{n}$ converges in the
Benjamini-Schramm sense \cite{benjamini_schramm_convergence} to the
lamplighter graph $\z_{2}\wr\z$. We conjecture that the Benjamini-Schramm
limit of the twisted cube-connected-cycle, obtained by replacing every
vertex of $\boldsymbol{G}_{n}$ by an $n$-cycle, is the $3$-regular
tree: as $n\to\infty$, a vertex chosen at random from this graph
corresponds to a high-generation edge with high probability, and these
should not be part of many small cycles. 
\item Although Theorem \ref{thm:asymmetry_of_g_n} shows that random permutations
lead to an asymmetric graph, in general different choices of $\bar{\sigma}$
can lead to different automorphism groups. Can we relate properties
of the automorphism group of the duplicube $G_{n}\left(\bar{\sigma}\right)$
with properties of $\bar{\sigma}$? In particular, can we find large
families of $\bar{\sigma}$ so that $G_{n}\left(\bar{\sigma}\right)$
is vertex-transitive? As a non-trivial example, consider the permutations
$\sigma_{k}=\mathrm{Id}$ for $k\neq2$. There are two essentially
different possibilities for $\sigma_{2}$: the first is $\sigma_{2}=\mathrm{Id}$,
leading to the hypercube $Q_{n}$. The second is the matching between
a pair of $4$-cycles which sends an edge to a non-edge. This leads
to a vertex-transitive graph that is not isomorphic to $Q_{n}$. Can
we find a (perhaps random) vertex-transitive $G_{n}\left(\bar{\sigma}\right)$
with improved geometric properties over the hypercube? 
\item The argument in Theorem \ref{thm:good_vertex_expander} only uses
the edges of the last three generations of the twisted hypercube.
On the other hand, such an argument could not hold while using only
the edges of the last two generations, since the graph induced by
the edges of the last two generations is a union of cycles. In fact,
we believe that when $\sigma_{i}=\mathrm{Id}$ for $i<n-2$, the resultant
graph does not have constant vertex-expansion with high probability.
In light of this, it is natural to ask: for an integer $k>0$, what
are the properties of the twisted hypercube graph, where $\sigma_{i}=\mathrm{Id}$
for $i<n-k$, and $\sigma_{i}$ is uniformly random for $i\geq n-k$?
What happens when $k$ grows slowly to infinity with $n$? This is
a natural interpolation between the hypercube $Q_{n}$ and the completely
random twisted hypercube $\boldsymbol{G}_{n}$.
\item The hypercube $Q_{n}$ induces a partial order on its vertices in
a natural way: $x\leq y$ if $x_{i}\leq y_{i}$ for every $i$. This
natural partial order has applications (see e.g. \cite[Chapter 6]{van_lint_wilson_course_in_combinatorics}).
The twisted hypercube induces a similar partial order inductively:
given the order on $G_{n-1}$, extend it to $G_{n}$ by having $\left(x,0\right)<\left(\sigma_{n-1}\left(x\right),1\right)$
for all $x\in V_{n-1}$, and by keeping the original order within
$G_{n-1}$ in both instances. It can be verified that this is indeed
a partial order. What are the properties of this partial order as
a function of $\bar{\sigma}$? Are there any combinatorial applications
to the partial order produced by the twisted hypercube?
\item The hypercube $Q_{n}$ is bipartite, and hence always 2-colorable.
On the other hand, the chromatic number $\chi$ of random $d$-regular
graphs (of constant degree $d$) is known to take only one of two
possible values with high probability, and satisfies $2\chi\log\chi\approx d$
\cite{kemkes_perez_wormald_chromatic_number}. What is the chromatic
number of $\boldsymbol{G}_{n}$? It can be shown that it is at least
$3$ with high probability, and so $\boldsymbol{G}_{n}$ is in general
not bipartite (to see this, consider the case where $\boldsymbol{G}_{n}$
is bipartite, and look at the probability that $\sigma_{n}$ induces
an odd cycle in $\boldsymbol{G}_{n+1}$).
\end{enumerate}

\section{Proofs\label{sec:postponed_proofs}}

\subsection{Notation and definitions}

All logarithms are in base $e$ unless otherwise noted. For two sequences
$f\left(n\right)$, $g\left(n\right)$, we write $f=o\left(g\right)$
if $\lim\abs{f\left(n\right)}/\abs{g\left(n\right)}\to0$. For natural
numbers $n,k\in\n$, we set $N=2^{n}$ and $K=2^{k}$. The set of
numbers $1,\ldots,n$ is denoted by $\left[n\right]$. 

We denote the vertex set of $G_{n}$ by $V_{n}:=\left\{ 0,1\right\} ^{n}$.
For two sets $S_{1},S_{2}\subseteq V_{n}$, write $S_{1}\sim S_{2}$
if there are $x\in S_{1}$ and $y\in S_{2}$ with $\left\{ x,y\right\} \in E\left(G_{n}\right)$,
and $S_{1}\not\sim S_{2}$ otherwise. We say that the edges between
two disjoint sets of vertices $A,B\subseteq V_{n}$ constitute a \emph{matching}
if every vertex in $A\union B$ is adjacent to at most one such edge.

Let $x,y\in V_{n}$. The \emph{generation number} of $x$ and $y$,
denoted by $\gamma\left(x,y\right)$, is defined as 
\[
\gamma\left(x,y\right):=n-\max\left\{ 1\leq s\leq n\mid x_{i}=y_{i}\all i\geq s\right\} ,
\]
i.e. $n$ minus the longest common suffix of $x$ and $y$. If $\left\{ x,y\right\} \in E\left(G_{n}\right)$
is an edge, then that edge is due to the permutation $\sigma_{\gamma\left(x,y\right)-1}$.
Supposing that $\gamma\left(x,y\right)=k$, we then say that $x$
and $y$ are $k$-neighbors. Every vertex $x$ has exactly one $k$-neighbor
for every $k\in\left[n\right]$; we denote it by $N_{k}\left(x\right)$.

For an integer $r>0$ and vertex $v\in V_{n}$, denote by
\[
B\left(v,r\right):=\left\{ z\in V_{n}\mid\exists\text{ a path of at most \ensuremath{r} edges from \ensuremath{v} to \ensuremath{z}}\right\} 
\]
the ball of radius $r$ around $v$, and by
\[
B_{<k}\left(v,r\right):=\left\{ z\in V_{n}\mid\exists\text{ a path \ensuremath{P} of at most \ensuremath{r} edges from \ensuremath{v} to \ensuremath{z} s.t. \ensuremath{\gamma\left(x,y\right)<k} \ensuremath{\all\left\{  x,y\right\} } \ensuremath{\in E\left(P\right)} }\right\} 
\]
the $r$-neighborhood of $v$ obtained by paths which only use edges
of generations smaller than $k$.

For $1\leq s<n$, the graph $G_{n}$ contains multiple disjoint instances
of graphs $G_{s}$. Indeed, let $z\in\left\{ 0,1\right\} ^{n-s}$,
and define 
\begin{equation}
V_{n}^{z}:=\left\{ \left(y,z\right)\in V_{n}\Bigl|y\in\left\{ 0,1\right\} ^{s}\right\} .\label{eq:disjoint_copy_definition}
\end{equation}
Then the induced graph on $V_{n}^{z}$ is an instance of $G_{s}$
(when the construction is deterministic, or in the case of the duplicube,
these instances are all isomorphic). The sets $V_{n}^{z}$ are disjoint
for different $z$, and partition the vertices of $G_{n}$. For a
vertex $x\in V_{n}$, let $I_{s}\left(x\right)$ be the set $V_{n}^{z}$
which contains $x$; it is the set of all vertices in $G_{n}$ which
share a suffix with $x$ of size at least $n-s$, i.e.,

\[
I_{s}\left(x\right):=\left\{ y\in V_{n}\mid\gamma\left(x,y\right)\leq s\right\} .
\]
Note that $\abs{I_{s}\left(x\right)}=2^{s}$. See Figure \ref{fig:containing_subgraphs}
for a visual aid. Finally, for a set $S\subseteq V_{n}$, we denote
by $\partial_{k}S$ the boundary due to the first $k$ generations
of edges, i.e.
\[
\partial_{k}S=\left\{ x\notin S\mid\exists y\in S,\left\{ x,y\right\} \in E\left(G_{n}\right),\gamma\left(x,y\right)\leq k\right\} .
\]
We often write $\partial S$ instead of $\partial_{n}S$ for brevity.
\begin{figure}[H]
\begin{centering}
\includegraphics[scale=0.33]{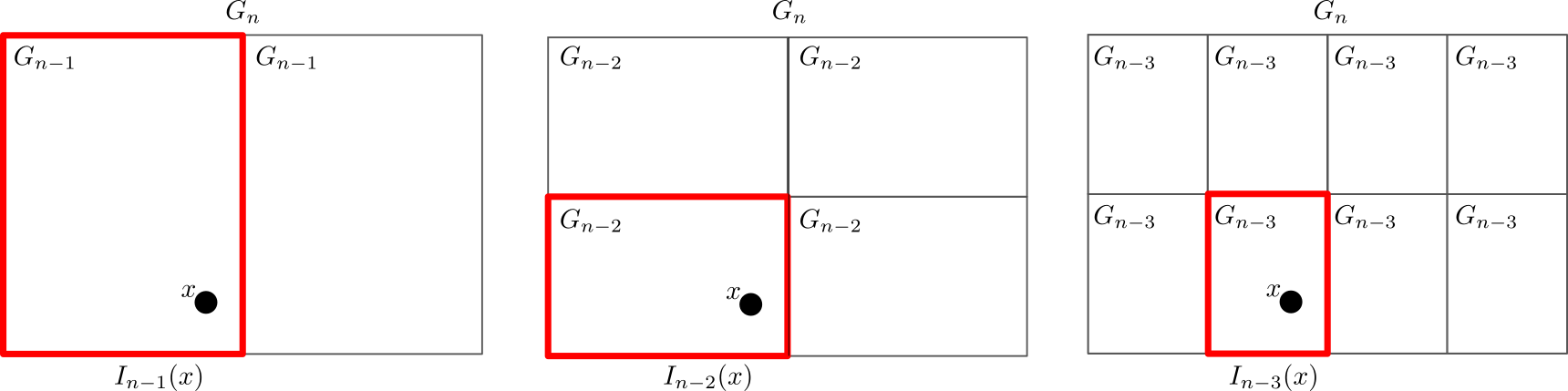}
\par\end{centering}
\caption{\label{fig:containing_subgraphs}Each large rectangle represents the
same graph $G_{n}$, with the same vertex $x$ highlighted. The partition
into instances of $G_{s}$ is shown for $s\in\left\{ n-1,n-2,n-3\right\} $
while highlighting $I_{s}\left(x\right)$.}
\end{figure}

\subsection{The diameter\label{subsec:postponed_diameter_proofs}}

The proof of Theorem \ref{thm:small_diameter_duplicube} resembles
the proof of Dudek et al. \cite{dudek_et_al_randomly_twisted_hypercube}
for the independent twisted hypercube.
\begin{proof}[Proof of Theorem \ref{thm:small_diameter_duplicube}]

The main idea of the proof is to show that with high probability,
for every $v\in V_{n}$, the ball around $v$ of radius $\frac{n}{2\log_{2}n}+O\left(\frac{n}{\log_{2}^{2}n}\right)$
contains $\geq n2^{n/2}$ vertices in the copy of $\boldsymbol{G}_{n-1}$
which contains $v$. If this holds, then the diameter is $\frac{n}{\log_{2}n}+O\left(\frac{n}{\log_{2}^{2}n}\right)$:
for every $v\in V_{n}^{0},u\in V_{n}^{1}$, denote by $S_{v}^{n-1},S_{u}^{n-1}$
the balls around $v,u$ in $\boldsymbol{G}_{n-1}$. The probability
that $S_{v}^{n-1},S_{u}^{n-1}$ are connected by the last permutation
is 
\[
1-\frac{{2^{n-1}-n2^{n/2} \choose n2^{n/2}}}{{2^{n-1} \choose n2^{n/2}}}\geq1-\left(1-\frac{n2^{(n-1)/2}}{2^{n-1}}\right)^{n2^{\left(n-1\right)/2}}\geq1-e^{-n^{2}}.
\]
By a union bound, with high probability every two such balls are connected,
so we can find a path of length $\frac{n}{\log_{2}n}+O\left(\frac{n}{\log_{2}^{2}n}\right)$
from $v$ to $u$. If $v,u$ are are in the same $V_{n}^{i}$ then
we use the path from $v$ to $\sigma_{n-1}\left(u\right)$ and go
from $\sigma_{n-1}\left(u\right)$ to $u$ via one additional edge.

Hence we shall show that for any fixed $v\in V$, the ball of radius
$\frac{n}{2\log_{2}n}+O\left(\frac{n}{\log_{2}^{2}n}\right)$ contains
$\geq n2^{n/2}$ vertices in the copy of $\boldsymbol{G}_{n-1}$ which
contains $v$, with probability $1-o\left(n2^{-2n}\right)$. Beginning
with an empty graph on $V=\left\{ 0,1\right\} ^{n}$, we add edges
by revealing the values of the permutations $\sigma_{k}$ one-by-one,
as follows. Set $q=0.9n$. 
\begin{enumerate}
\item Initiate a queue $Q$ and insert $v\in Q$.
\item While $Q\ne\emptyset$:\\
Take out the first $u\in Q$ and for every $i=q,q+1,...,n-2$: 
\begin{enumerate}
\item \label{enu:revealing_the_edges}Let $u=(u_{1},b,u_{2})$ so that $u_{1}\in\{0,1\}^{i}$.
If $b=0$, set $\pi=\sigma_{i}$, and otherwise set $\pi=\sigma_{i}^{-1}$;
then, if $\pi\left(u_{1}\right)$ wasn't previously revealed, reveal
it. This is called the \emph{revealing step}.
\item For every $u'\in\{0,1\}^{n-1-i}$, we add all edges $\left\{ (u_{1},b,u'),(\pi\left(u_{1}\right),1-b,u')\right\} $.
\item For every vertex $w$ that was connected to $u$ and was not previously
added to $Q$, we add $w\in Q$.
\end{enumerate}
\item After $Q$ is empty reveal all other edges in an arbitrary order.
\end{enumerate}
We note that when revealing an entry $\sigma_{i}\left(u\right)$ or
$\sigma_{i}^{-1}\left(u\right)$, we in fact add $2^{n-1-i}$ edges
to $\boldsymbol{G}_{n}$ that come from the different copies of the
$i$-th generation duplicube. We say that a vertex $u$ is \emph{discovered}
at step $k$, if one of the edges revealed in the $k$-th step is
the first edge that is adjacent to $u$ (where $k$ refers to the
number of times we have done step (\ref{enu:revealing_the_edges})).
Let $G'$ be the subgraph of $\boldsymbol{G}_{n}$ whose edges are
only the edges of generations $i\geq q$. Let $S_{j}$ be the set
of vertices $u\in V$ so that $d_{G'}\left(v,u\right)=j$ and so that
if $u$ was discovered at step $k$, then no vertex with the same
$0.9n$-prefix of $u$ was discovered previously. Finally, fix $r_{0}$
to be the smallest integer so that $n^{r_{0}}\geq2^{0.1n}$ and let
$r_{1}$ be the smallest integer so that $\left(n/1000\right)^{r_{1}}\geq1000n2^{n/2}$.
Clearly $r_{0}=\frac{0.1n}{\log_{2}n}+O\left(1\right)$ and $r_{1}=\frac{n}{2\log_{2}n}+O\left(\frac{n}{\log_{2}^{2}n}\right)$. 

We will analyze the growth of $F(j)=\abs{S_{j}}$ separately for $j\leq r_{0}$
and $r_{0}<j\leq r_{1}$ starting with $j\leq r_{0}$. We say that
the $k$-th step is \emph{bad} if $u=(u_{1},b,u_{2})$ was the vertex
taken out of $Q$, the value $x=\sigma_{i}^{\pm1}(u_{1})$ was revealed,
and there exists a vertex $w$ whose prefix is $x$ that was discovered
in a previous step. At the first phase, we will show that there are
very few bad steps. First we calculate a bound on the number of steps
$m$ while the distance between $v$ and the vertex that was taken
out of the queue in the step is of distance $j\leq r_{0}$. Namely,
\[
m\leq\left(n-q\right)\sum_{j=1}^{r_{0}}\left(n-q-1\right)^{j}\leq2n\left(0.1n\right)^{r_{0}}\leq2^{0.11n}.
\]
Moreover, for the $\ell$-th step for $\ell\leq m$, the probability
that $\ell$ is a bad step is at most
\[
p_{\ell}\leq\frac{\ell\left(n-q\right)+1}{2^{0.9n}-1-\ell\left(n-q\right)},
\]
where the numerator is the number of previously discovered vertices
(which upper bounds the number of prefixes  discovered), and the denominator
is the number of choices left. We note that the probability of choosing
a given prefix is not uniform, but if a given prefix has already been
chosen it only decreases its probability to be chosen again. This
is at most
\[
p_{\ell}\leq\frac{n2^{0.11n}}{2^{0.899n}}\leq2^{-0.78n}.
\]
As this bound is uniform for all $\ell$ and the same bound holds
true for the conditional probability subject to any way of revealing
the first $(\ell-1)$ edges, the probability $p$ of having $c$ bad
steps in the first phase is at most
\[
p\leq\binom{m}{c}2^{-0.78nc}\leq2^{0.11nc-0.78nc}=2^{-0.67cn}.
\]
Taking $c=4$ we get that this probability is $o\left(2^{-2n}\right).$
Let $k$ be a step where the vertex taken out of $Q$ is in $S_{j-1}$.
If this $k$-th step is not bad, then $S_{j}$ grows by $1$ due to
a new vertex discovered, and if there is a bad edge then it reduces
the size by at most 2 (since there were at most two prefixes involved
in choosing the bad step). Therefore, 

\[
F\left(1\right)\geq n-q-2c\geq\left(n/1000\right),
\]
\[
F\left(2\right)\geq\left(F\left(1\right)-2c\right)\left(n-q\right)\geq0.1n\left(0.001n-8\right)\geq\left(n/1000\right)^{2},
\]

\[
...
\]

\[
F\left(r_{0}\right)\geq\left(F\left(r_{0}-1\right)-2c\right)\left(n-q\right)\geq\left(\left(n/1000\right)^{r_{0}-1}-8\right)0.1n\geq\left(n/1000\right)^{r_{0}}.
\]
During the second phase, we don't expect there to be no bad steps,
but as the set $S_{j}$ is already quite large, we expect that $S_{j}$
will still grow by an $\frac{n}{1000}$-factor. Indeed, conditioned
on $F\left(j\right)\geq\left(n/1000\right)^{j}\geq2^{0.005n},$ we
show that $F\left(j+1\right)\geq\frac{n}{1000}F\left(j\right)$ with
probability $\geq1-o\left(2^{-2n}\right)$. When all these events
occur, we can conclude that $F\left(r_{1}\right)\geq n2^{n/2}$ with
probability $1-o\left(n2^{-2n}\right).$ Fix $j>r_{0}$ and let $X$
be a random variable counting the number of bad steps exposed from
the vertices of $S_{j}$. The number of new vertices we discovered
up to this step is at most $n^{j}$ (and this is also a bound for
the number of prefixes discovered). For every step in this phase,
the probability that it is bad is at most
\[
p'\leq\frac{n^{j}}{2^{0.9n}-n^{j}},
\]
and as $n^{j}\leq n^{r_{1}}\leq2^{0.51n}$, this is at most $\frac{n^{j}}{2^{0.6n}}$.
We can bound $X$ from above with a $\left(F\left(j\right),\frac{n^{j}}{2^{0.6n}}\right)$-binomially
distributed random variable. Thus $\e[X]\leq F(j)\frac{n^{j}}{2^{0.6n}}=o(F(j))$.
Furthermore, by Chernoff's bound on binomial variables 
\[
\p\left[X\geq F\left(j\right)/10000\right]\leq e^{-\Omega\left(F\left(j\right)\right)}\leq o\left(2^{-2n}\right).
\]
When this event doesn't occur, then
\[
F\left(j+1\right)\geq\left(F\left(j\right)-2X\right)\left(n-q\right)\geq0.9998\cdot F\left(j\right)\cdot0.1n\geq F\left(j\right)\frac{n}{1000}
\]
as required.
\end{proof}
\begin{rem*}
As mentioned before, the proof of Theorem \ref{thm:small_diameter_duplicube}
is an adaptation of the proof in Dudek et al. \cite{dudek_et_al_randomly_twisted_hypercube}
for the small diameter of the independent twisted hypercube. The main
change is that in this proof, revealing the value of a permutation
$\sigma_{k}\left(u_{1}\right)$ (for $u_{1}\in\{0,1\}^{k}$) reveals
many edges between $\left(u_{1},0,u_{2}\right)$ to $\left(\sigma_{k}\left(u_{1}\right),1,u_{2}\right)$.
Hence, instead of accounting for the number of vertices already discovered,
we account for the number of prefixes already discovered.
\end{rem*}
\begin{proof}[Proof of Theorem \ref{thm:small_diameter_general_case}]
The proof is by induction. For the base cases, let $\gamma>0$ to
be chosen later, and let $k^{*}=\gamma\log n\log\log n$. by Proposition
\ref{prop:trivial_upper_bound}, for all $n\leq k^{*}$ we have 
\begin{align*}
D\left(\boldsymbol{G}_{n}\right) & \leq n\leq\log\log k^{*}\frac{n}{\log\log n}\\
 & \leq C\log\log\log n\frac{n}{\log\log n}
\end{align*}
and so (\ref{eq:n_over_log_n_diameter}) holds with probability $1$
for $C$ large enough (depending on $\gamma$).

For the induction step, let $n>k^{*}$. By increasing $\gamma$, we
may assume that $n$ is larger than any given global constant; this
will ensure that inequalities which hold only when $n$ is large enough
indeed hold. For an integer $k\geq1$ and $z\in\left\{ 0,1\right\} ^{n}$,
let $\boldsymbol{G}_{k}^{z}$ be the induced graph on $I_{k}\left(z\right)$;
this is an instance of $\boldsymbol{G}_{k}$. Denote by $E_{k}^{z}$
the event that $D\left(\boldsymbol{G}_{k}^{z}\right)\leq Ck\frac{\log\log\log k}{\log\log k}$,
and assume that $E_{k}^{z}$ holds for every $k=1,\ldots,n-1$ and
every $z\in\left\{ 0,1\right\} ^{n}$. Let $x,y\in V_{n}$. If $x_{n}=y_{n}$,
i.e. the two vertices are in the same half of the graph $\boldsymbol{G}_{n}$,
then by the induction hypothesis, $D\left(\boldsymbol{G}_{n-1}^{x}\right)\leq C\left(n-1\right)\frac{\log\log\log\left(n-1\right)}{\log\log\left(n-1\right)}$,
and we certainly have $d_{\boldsymbol{G}_{n}}\left(x,y\right)\leq Cn\frac{\log\log\log n}{\log\log n}$.

For the case $x_{n}=1-y_{n}$, i.e. the two vertices are in opposite
sides of the graph $\boldsymbol{G}_{n}$, we'll show that for a not-too-large
radius, the spheres around $x$ and $y$ contain enough vertices,
so that with high probability there is an edge between them induced
by $\sigma_{n-1}$. 

Given a vertex $v\in V_{n}$ and any integers $t\geq s\geq0$, let
$M\left(v,s,t\right)=\left\{ \left(N_{k}\left(v\right),k\right)\mid s\leq k\leq t\right\} $
be the set of neighbors of $v$ whose edge to $v$ was added at times
$s\leq k\leq t$, along with their generation number. Note that for
$\left(z,k\right)\in M\left(v,s,t\right)$, the set $I_{k}\left(z\right)$
is contained in $I_{t+1}\left(v\right)$, and that since each $k$-neighbor
is added at a different generation, the sets $\left\{ I_{k}\left(z\right)\right\} _{\left(z,k\right)\in M\left(v,s,t\right)}$
are all mutually disjoint (see Figure \ref{fig:branching_out}). We
can therefore iteratively apply the function $M\left(v,s,t\right)$
to obtain a large set of disjoint vertices. 

\begin{figure}[H]
\begin{centering}
\includegraphics[scale=0.33]{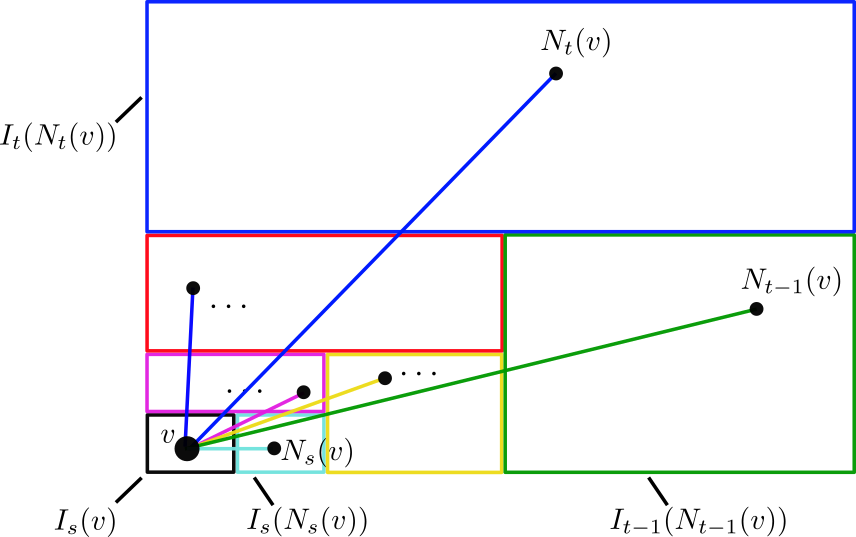}
\par\end{centering}
\caption{\label{fig:branching_out}The entire rectangle represents the graph
$I_{t+1}\left(v\right)$. Each neighbor $N_{k}\left(v\right)$ is
contained in $I_{k}\left(N_{k}\left(v\right)\right)$, and these $I_{k}\left(N_{k}\left(v\right)\right)$
are all disjoint.}
\end{figure}

More formally, let $s,\ell>0$ be integers, and consider a subset
$S_{\ell}\left(x\right)$ of the sphere of radius $\ell$ around $x$,
defined as follows:
\begin{align*}
S_{0}\left(x\right) & =\left\{ \left(x,n-1\right)\right\} \\
S_{i}\left(x\right) & =\bigcup_{\left(z,t\right)\in S_{i-1}\left(x\right)}M\left(z,s,t-1\right).
\end{align*}
By the remark above, the sets $\left\{ I_{s}\left(z\right)\right\} _{\left(z,t\right)\in S_{\ell}\left(x\right)}$
are all disjoint, and so the set $U\left(x\right):=\union_{\left(z,t\right)\in S_{\ell}\left(x\right)}I_{s}\left(z\right)$
has cardinality $2^{s}\abs{S_{\ell}\left(x\right)}$. Define $S_{\ell}\left(y\right)$
and $U\left(y\right)$ similarly. Write the values of $s$ and $\ell$
as $s=\frac{n}{2}-\frac{1}{2}\alpha\left(n\right)$ and $\ell=\frac{n}{\beta\left(n\right)}$,
for some functions $\alpha,\beta:\n\to\n$ to be chosen later. Assuming
that there is a vertex $u\in U\left(x\right)$ which is connected
to $v\in U\left(y\right)$, the distance between $x$ and $y$ can
be bounded as follows:
\begin{equation}
d_{\boldsymbol{G}_{n}}\left(x,y\right)\leq2\ell+D\left(\boldsymbol{G}_{s}^{u}\right)+D\left(\boldsymbol{G}_{s}^{v}\right)+1,\label{eq:partial_diameter_bound}
\end{equation}
where $\ell$ bounds the distance to go from $x$ to a vertex $z$
in $S_{\ell}\left(x\right)$, $D\left(\boldsymbol{G}_{s}^{u}\right)$
bounds the distance from $z$ to $u$, and $1$ is the distance from
$u$ to $v$ (see Figure \ref{fig:the_path}).

\begin{figure}[H]
\begin{centering}
\includegraphics[scale=0.33]{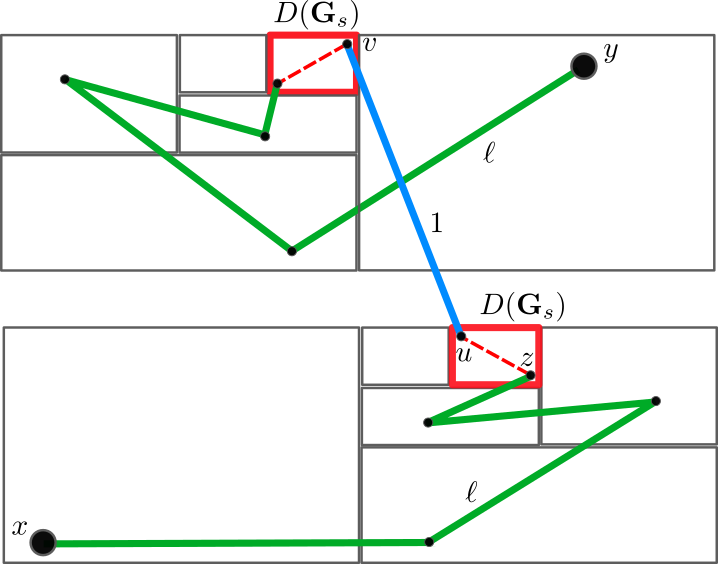}
\par\end{centering}
\caption{\label{fig:the_path}If $U\left(x\right)$ is connected to $U\left(y\right)$,
we have a path from $x$ to $y$. The red dotted lines represent an
optimal path within $\boldsymbol{G}_{s}$.}
\end{figure}
We now analyze $D\left(\boldsymbol{G}_{s}^{u}\right)+D\left(\boldsymbol{G}_{s}^{v}\right)$.
By choice of $s$, we have 
\begin{align*}
2C\frac{s}{\log\log s} & =2C\frac{\frac{n}{2}-\frac{1}{2}\alpha\left(n\right)}{\log\log\left(\frac{n}{2}-\frac{1}{2}\alpha\left(n\right)\right)}\overset{\left(\text{assume }\alpha\left(n\right)\leq\frac{1}{2}n\right)}{\leq}C\frac{n-\alpha\left(n\right)}{\log\log\left(\frac{n}{4}\right)}=C\frac{n\left(1-\frac{\alpha\left(n\right)}{n}\right)}{\log\log n+\log\left(1-\frac{\log4}{\log n}\right)}\\
 & \leq C\frac{n\left(1-\frac{\alpha\left(n\right)}{n}\right)}{\log\log n-\frac{2\log4}{\log n}}=C\frac{n}{\log\log n}\frac{1-\frac{\alpha\left(n\right)}{n}}{1-\frac{2\log4}{\log n\log\log n}}\leq C\frac{n}{\log\log n}\left(1-\frac{\alpha\left(n\right)}{n}\right)\left(1+\frac{6}{\log n\log\log n}\right)\\
 & \leq C\frac{n}{\log\log n}\left(1+\frac{6}{\log n\log\log n}-\frac{\alpha\left(n\right)}{n}\right).
\end{align*}
Since we assume that $E_{s}^{u}$ and $E_{s}^{v}$ hold, i.e. that
$D\left(\boldsymbol{G}_{s}^{v}\right),D\left(\boldsymbol{G}_{s}^{u}\right)\leq Cs\frac{\log\log\log s}{\log\log s}\leq Cs\frac{\log\log\log n}{\log\log s}$,
we have
\begin{align*}
D\left(\boldsymbol{G}_{s}^{u}\right)+D\left(\boldsymbol{G}_{s}^{v}\right) & \leq Cn\frac{\log\log\log n}{\log\log n}\left(1+\frac{6}{\log n\log\log n}-\frac{\alpha\left(n\right)}{n}\right).
\end{align*}
Choosing $\alpha\left(n\right)=\frac{17n}{\log n}$ then gives 
\[
D\left(\boldsymbol{G}_{s}^{u}\right)+D\left(\boldsymbol{G}_{s}^{v}\right)\leq Cn\frac{\log\log\log n}{\log\log n}\left(1-\frac{1}{\log n}\right).
\]
Choosing also $\beta\left(n\right)=\frac{\log2}{18}\log n\log\log n$,
so that $\ell=\frac{18}{\log2}\frac{n}{\log n\log\log n}$, by (\ref{eq:partial_diameter_bound})
we have that 
\begin{align*}
d_{\boldsymbol{G}_{n}}\left(x,y\right) & \leq\frac{36}{\log2}\frac{n}{\log n\log\log n}+Cn\frac{\log\log\log n}{\log\log n}\left(1-\frac{1}{\log n}\right)+1\\
\left(\text{for \ensuremath{C} large enough}\right) & \leq Cn\frac{\log\log\log n}{\log\log n}.
\end{align*}
All that remains is to bound the probability of the event $\left\{ U\left(x\right)\sim U\left(y\right)\all x,y\in V_{n}\right\} $
from below. We do this using a union bound. The number of vertices
in $S_{\ell}\left(x\right)$ can readily be seen to be 
\[
\sum_{k_{1}=s}^{n-1}\sum_{k_{2}=s}^{k_{1}-1}\sum_{k_{3}=s}^{k_{2}-1}\ldots\sum_{k_{\ell}=s}^{k_{\ell-1}-1}1=\sum_{k_{1}=1}^{n-s-1}\sum_{k_{2}=1}^{k_{1}-1}\sum_{k_{3}=1}^{k_{2}-1}\ldots\sum_{k_{\ell}=1}^{k_{\ell-1}-1}1.
\]
This is the number of decreasing positive integer sequences of length
$\ell$, whose maximum entry is bounded by $n-s-1$. Since every choice
of $\ell$ integers can be ordered in a unique fashion, we have 
\begin{align*}
\abs{S_{\ell}\left(x\right)} & ={n-s-1 \choose \ell}\\
 & ={\frac{n}{2}+\frac{1}{2}\alpha\left(n\right)-1 \choose n/\beta\left(n\right)}\\
 & \geq\left(\frac{\frac{n}{2}+\frac{1}{2}\alpha\left(n\right)-1}{n/\beta\left(n\right)}\right)^{n/\beta\left(n\right)}\\
 & \geq\left(\frac{\beta\left(n\right)}{2}\right)^{n/\beta\left(n\right)}\\
 & =\exp\left(\left(\log\beta\left(n\right)-\log2\right)\frac{n}{\beta\left(n\right)}\right)\\
\left(\text{Assume \ensuremath{n_{0}} large so that \ensuremath{\log\beta\left(n\right)\geq2\log2}}\right) & \geq\exp\left(\frac{n\log\beta\left(n\right)}{2\beta\left(n\right)}\right).
\end{align*}
The collection $U\left(x\right)=\union_{\left(z,t\right)\in S_{\ell}\left(x\right)}I_{s}\left(z\right)$
has size at least
\[
\abs{U\left(x\right)}=2^{s}\abs{S_{\ell}\left(x\right)}\geq2^{\frac{n}{2}-\frac{1}{2}\alpha\left(n\right)+\frac{\log2}{2}\frac{n\log\beta\left(n\right)}{\beta\left(n\right)}}.
\]
Denoting $U=\abs{U\left(x\right)}=\abs{U\left(y\right)}$, the probability
that the sets $U\left(x\right)$ and $U\left(y\right)$ are disconnected
at the $n$-th step is therefore bounded from above by
\begin{align*}
\p\left[U\left(x\right)\not\sim U\left(y\right)\right] & =\frac{{2^{n-1}-U \choose U}}{{2^{n-1} \choose U}}\\
 & =\frac{\left(2^{n-1}-U\right)\left(2^{n-1}-U-1\right)\cdots\left(2^{n-1}-2U+1\right)}{2^{n-1}\left(2^{n-1}-1\right)\cdots\left(2^{n-1}-U+1\right)}\\
\left(\text{AM-GM inequality}\right) & \leq\frac{\left(2^{n-1}-\frac{3U-1}{2}\right)^{U}}{\left(2^{n-1}-U\right)^{U}}\\
 & =\left(1-\frac{1}{2}\frac{U-1}{2^{n-1}-U}\right)^{U}\\
 & \leq\left(1-\frac{U-1}{2^{n}}\right)^{U}\\
 & \leq\exp\left(-U^{2}/2^{n}+U/2^{n}\right)\\
 & \leq\exp\left(-2^{-\alpha\left(n\right)+\log2\frac{n\log\beta\left(n\right)}{\beta\left(n\right)}+o\left(1\right)}\right).
\end{align*}
Plugging in our choice of $\alpha\left(n\right)$ and $\beta\left(n\right)$,
we get 

\begin{align*}
\p\left[U\left(x\right)\not\sim U\left(y\right)\right] & \leq\exp\left(-2^{-\frac{17n}{\log n}+\log2\frac{n\log\left(\frac{\log2}{18}\log n\log\log n\right)}{\frac{\log2}{18}\log n\log\log n}+o\left(1\right)}\right)\\
 & \leq\exp\left(-2^{-\frac{17n}{\log n}+\frac{18n}{\log n}\left(1+o\left(1\right)\right)}\right)\\
 & =\exp\left(-2^{\frac{n}{\log n}\left(1+o\left(1\right)\right)}\right)\\
 & \leq\exp\left(-2^{\frac{n}{2\log n}}\right)
\end{align*}
for $n$ large enough. As there are no more than $2^{2n}=e^{2n\log2}$
choices for the pairs $x,y$, this gives 
\[
\p\left[\exists x,y\text{ s.t. }U\left(x\right)\not\sim U\left(y\right)\right]\leq\exp\left(-2^{\frac{n}{2\log n}}+2n\log2\right)\leq\exp\left(-2^{\frac{n}{4\log n}}\right)
\]
for $n$ large enough. Let $E_{k}=\union_{z\in\left\{ 0,1\right\} ^{n}}E_{k}^{z}$.
We have thus shown that 
\[
\p\left[E_{n}\mid E_{1},\ldots,E_{n-1}\right]\geq1-\exp\left(-2^{\frac{n}{4\log n}}\right).
\]
Since there are $2^{n-k}$ instances of $\boldsymbol{G}_{k}$ in $\boldsymbol{G}_{n}$,
and recalling that $\p\left[E_{k}\right]=1$ for $k\leq k^{*}$, by
repeated conditioning we thus have 
\begin{align*}
\p\left[E_{n}\right] & \geq1-\sum_{k=k^{*}}^{n}\exp\left(-2^{\frac{k}{4\log k}}\right)2^{n-k}\\
 & \geq1-\sum_{k=k^{*}}^{n}\exp\left(-2^{\frac{k}{4\log k}}+n\log2\right).
\end{align*}
By choosing $\gamma$ large enough, for $k\geq k^{*}$ we have 
\begin{align*}
\frac{k}{4\log k} & \geq\frac{k^{*}}{4\log k^{*}}=\frac{\gamma\log n\log\log n}{4\log\left(\gamma\log n\log\log n\right)}\\
 & \geq2\log_{2}n,
\end{align*}
and so 
\[
\p\left[E_{n}\right]\geq1-\sum_{k=k^{*}}^{n}e^{-n}.
\]
\end{proof}

\subsection{Vertex expansion\label{subsec:postponed_expansion_proofs}}

The proof of Theorem \ref{thm:good_vertex_expander} relies on the
observation that a set $S\subseteq V_{n}$ sampled uniformly at random
will be an $\alpha$-vertex expander with high probability (for some
small constant $\alpha>0$), since a constant fraction of the edges
of $\sigma_{n-1}$ will go from $S$ to its complement. This alone
is not enough, since there are always sets of the form $S=S_{0}\union S_{1}$,
where $S_{0}\subseteq V_{n}^{0}$ (recall (\ref{eq:disjoint_copy_definition})
for the definition of $V_{n}^{z}$) and $S_{1}=\left\{ N_{n}\left(x\right)\mid x\in S_{0}\right\} $.
To overcome this, we look at edges coming from the last three permutations,
$\sigma_{n-1},\sigma_{n-2},\sigma_{n-3}$, and bound the number of
sets $S\subseteq V_{n}$ so that the boundary that comes from $\sigma_{n-3}$-edges
isn't large enough. Afterwards we apply a union bound over these sets
to bound the probability that they have a small $\sigma_{n-1}$- and
$\sigma_{n-2}$-boundary.

More precisely, sets which have a small contribution to their boundary
at the $k$-th generation are defined as follows.
\begin{defn}[Badly-matched sets]
Let $x\in\left\{ 0,1\right\} ^{k}$. Let $A\subseteq V_{k}^{0}$,
$B\subseteq V_{k}^{1}$. We say that $A,B$ are \emph{$\left(k,\alpha\right)$-badly-matched}
if 
\[
\frac{2\abs{x\in A\mid N_{k}\left(x\right)\in B}}{\abs A+\abs B}\geq\left(1-\alpha\right).
\]
\end{defn}

\begin{rem}
\label{rem:badly_matched_sets} If $A,B$ are badly-matched, then
$\abs{\abs A-\abs B}\leq\alpha\left(\abs A+\abs B\right)$. This is
because if, say, $\abs A>\abs B+\alpha\left(\abs A+\abs B\right)$
then even when all edges from $B$ go into \textbf{$A$ }there will
still be $\alpha\left(\abs A+\abs B\right)$ edges between $A$ and
$V_{k}^{1}\setminus B$. This implies that 

\[
2\abs{x\in A\mid N_{k}\left(x\right)\in B}\leq2\abs B<\left(1-\alpha\right)\left(\abs A+\abs B\right).
\]
If $A,B$ are not badly-matched, then $\abs{\partial_{k}\left(A\union B\right)}>\alpha\abs{A\union B}$,
since $\abs{\partial_{k}\left(A\union B\right)}=\abs A+\abs B-2\abs{x\in A\mid N_{k}\left(x\right)\in B}$,
so the set $A\union B$ has $\alpha$-expansion. If $A,B$ are $\left(k,\alpha\right)$-badly-matched,
then they are also $\left(k,\alpha'\right)$-badly-matched for every
$\alpha'\ge\alpha$. 
\end{rem}

As alluded to above, we start by bounding the possible number of badly-matched
sets in generation $n-2$, for any permutation $\sigma_{n-3}$; this
is the content of Proposition \ref{prop:number_of_badly_matched_pairs}.
We then bound the probability that said badly-matched sets are also
badly-matched in generations $n-1$ and $n$; this is the content
of Proposition \ref{prop:probability_of_badly_matched}. The last
claim we need for the proof is that sets of size $O(n)$ have non-trivial
vertex expansion regardless of the permutation. The proofs of all
assertions are found at the end of the section. 

Recall that for an integer $k>0$, we set $K:=2^{k}$. In addition,
denote by $H(x)=-x\log x-(1-x)\log(1-x)$ the binary entropy function.
\begin{prop}
\label{prop:number_of_badly_matched_pairs}There exists a function
$\delta:\r\to\r$ with $\lim_{x\to0}\delta\left(x\right)=0$, that
depends on $\eta$, such that the following holds. Let $\alpha>0$,
and let $k,j>0$ be integers so that $j\leq\eta K$. For any permutation
$\sigma_{k-1}$, the number of $\left(k,\alpha\right)$-badly-matched
sets $A\subseteq V_{k}^{0}$ and $B\subseteq V_{k}^{1}$ such that
$(1-\alpha)\frac{j}{2}\leq\abs A,\abs B\leq(1+\alpha)\frac{j}{2}$
is smaller than 
\[
5\alpha^{3}K^{3}2^{\frac{K}{2}(1+\delta(\alpha))H(\frac{j}{K})+j\delta(\alpha)}.
\]
\end{prop}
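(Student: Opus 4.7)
The plan is to enumerate the $(k,\alpha)$-badly-matched pairs $(A,B)$ by classifying each of the $K$ matched pairs $(x,\sigma_{k-1}(x))$ according to whether $x\in A$ and whether $\sigma_{k-1}(x)\in B$. Let $m,r_{A},r_{B},s$ denote the multiplicities of the four slot types (IN-IN), (IN-OUT), (OUT-IN), (OUT-OUT), so that $\abs A=m+r_{A}$, $\abs B=m+r_{B}$, and $m+r_{A}+r_{B}+s=K$. The badly-matched condition $\frac{2m}{\abs A+\abs B}\geq 1-\alpha$ becomes $r_{A}+r_{B}\leq\alpha(\abs A+\abs B)$.

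First I would enumerate the admissible triples $(a,b,m)$: the size constraints give $a,b\in[(1-\alpha)j/2,(1+\alpha)j/2]$, and the badly-matched condition combined with $m\leq\min(a,b)$ forces $m\in[(1-\alpha)^{2}j/2,(1+\alpha)j/2]$. Each of $a,b,m$ ranges over at most $\alpha j\leq\alpha K$ values, yielding at most $5\alpha^{3}K^{3}$ admissible triples (the constant $5$ absorbs the small extra factors coming from the $(1\pm\alpha)$ ranges and integer rounding). For each fixed triple, the number of pairs $(A,B)$ equals the multinomial coefficient $\binom{K}{m,r_{A},r_{B},s}$, since one needs to assign each matched pair to one of the four slot types with the prescribed counts.

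Next I would bound the multinomial via the standard entropy inequality $\binom{K}{m,r_{A},r_{B},s}\leq 2^{KH_{4}(\mu,\rho_{A},\rho_{B},\sigma)}$, where $\mu=m/K\approx\frac{j}{2K}$, $\sigma=s/K\approx 1-\frac{j}{2K}$, and $\rho_{A}+\rho_{B}\leq\alpha(1+\alpha)\frac{j}{K}$ is small. I would split the exponent into a main contribution $-K(\mu\log_{2}\mu+\sigma\log_{2}\sigma)$, close to $KH(\frac{j}{2K})$, and an error contribution $-K(\rho_{A}\log_{2}\rho_{A}+\rho_{B}\log_{2}\rho_{B})$, which in turn is bounded by $(r_{A}+r_{B})\log_{2}(K/(r_{A}+r_{B}))=O(\alpha j\log_{2}(1/\alpha)+\alpha j\log_{2}(K/j))$. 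For $\frac{j}{K}\leq\eta$, this error term is absorbable into $\frac{K}{2}\delta(\alpha)H(\frac{j}{K})+j\delta(\alpha)$ for a function $\delta$ depending on $\eta$ with $\delta(\alpha)\to 0$ as $\alpha\to 0$.

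The main obstacle is the quantitative comparison between $KH(\frac{j}{2K})$ (and more generally $KH(m/K)$ for admissible $m$) and $\frac{K}{2}(1+\delta(\alpha))H(\frac{j}{K})+j\delta(\alpha)$. This requires analyzing the ratio $H(p/2)/H(p)$ and the difference $KH(p/2)-\frac{K}{2}H(p)$ as functions of $p=j/K\in(0,\eta]$: for small $p$ one expands using the asymptotic $H(p)\approx p\log_{2}(1/p)$, while for $p$ bounded away from zero one uses continuity of $H$ together with the fact that admissible perturbations $m/K=(1\pm O(\alpha))\frac{j}{2K}$ yield only a $(1+O(\alpha))$ multiplicative error in the entropy. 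Combining these two regimes, calibrating $\delta_{\eta}$ accordingly, and multiplying by the polynomial prefactor $5\alpha^{3}K^{3}$ yields the stated bound.
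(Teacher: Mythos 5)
Your combinatorial decomposition is in spirit the same as the paper's (classify each matched pair by which of $A,B$ it touches and bound the resulting multinomial via entropy), but there is a factor-of-two miscount that turns out to be fatal as written. The matching between $V_{k}^{0}$ and $V_{k}^{1}$ is given by the permutation $\sigma_{k-1}$ on $\left\{0,1\right\}^{k-1}$, so it has $2^{k-1}=K/2$ edges, not $K$. Consequently the constraint should be $m+r_{A}+r_{B}+s=K/2$, the multinomial should be $\binom{K/2}{m,r_{A},r_{B},s}$, and the entropy exponent for the main term is $\frac{K}{2}H_{2}\!\left(\frac{m}{K/2}\right)\approx\frac{K}{2}H\!\left(\frac{j}{K}\right)$, which matches the target \emph{directly} and requires only the small multiplicative/additive corrections you already sketch.

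Because you used $K$ in place of $K/2$, you manufactured the comparison between $KH\!\left(\frac{j}{2K}\right)$ and $\frac{K}{2}H\!\left(\frac{j}{K}\right)$ and identified it as ``the main obstacle.'' This obstacle is not one you can overcome with the allowed error budget. Writing $p=j/K$, a Taylor expansion gives
\[
H\!\left(\tfrac{p}{2}\right)-\tfrac{1}{2}H(p)=\tfrac{p}{2}\log 2+O(p^{2}),
\]
so $K\bigl(H(p/2)-\tfrac{1}{2}H(p)\bigr)\approx\tfrac{j\log 2}{2}$. To absorb this into $\frac{K}{2}\delta(\alpha)H(p)+j\,\delta(\alpha)$ for all $p\le\eta$ you would need, at $p$ of order $\eta$,
\[
\tfrac{\log 2}{2}\;\lesssim\;\tfrac{\delta(\alpha)}{2}\log\tfrac{1}{\eta}+\delta(\alpha),
\]
i.e.\ $\delta(\alpha)$ bounded below by a positive constant depending only on $\eta$, contradicting $\delta(\alpha)\to 0$. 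So the plan as stated cannot close; the ``$H(p/2)$ vs.\ $H(p)$'' analysis you propose in the final paragraph has no way to succeed. Once you replace every $K$ that counts matched pairs by $K/2$, the spurious obstacle vanishes, and the remainder of your argument — the $5\alpha^{3}K^{3}$ prefactor from ranging $(a,b,m)$, and the absorption of $-\rho_{A}\log\rho_{A}-\rho_{B}\log\rho_{B}$ into $\frac{K}{2}\delta(\alpha)H(j/K)+j\,\delta(\alpha)$ via $j\log(K/j)\le KH(j/K)$ and $\alpha\log(1/\alpha)\to 0$ — is correct and closely parallels the paper's mean-value-theorem computation.
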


\begin{prop}
\label{prop:probability_of_badly_matched} There exists a function
$\delta:\r\to\r$ with $\lim_{x\to0}\delta\left(x\right)=0$ that
depends on $\eta$, such that the following holds. Let $\alpha>0$,
and let $k,j>0$ be integers so that $j\leq\eta K$. Let $A\subseteq V_{k}^{0}$
and $B\subseteq V_{k}^{1}$ be such that $(1-\alpha)\frac{j}{2}\leq\abs A,\abs B\leq(1+\alpha)\frac{j}{2}$.
If the permutations $\permvector$ are uniformly random, then 
\begin{equation}
\p\left[A,B\text{ are \ensuremath{\left(k,\alpha\right)}-badly-matched}\right]\leq3\alpha K^{2}2^{-\frac{K}{2}(1-\delta(\alpha))H(\frac{j}{K})+\delta(\alpha)j}.\label{eq:badly_matched_probability_bound}
\end{equation}
\end{prop}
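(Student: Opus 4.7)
The plan is to translate the bad event into a tail bound for a hypergeometric random variable and then apply standard entropy estimates on binomial coefficients. Define $A_0 = \{y \in \{0,1\}^{k-1} : (y,0) \in A\}$ and $B_1 = \{y \in \{0,1\}^{k-1} : (y,1) \in B\}$, and set $a := |A|$, $b := |B|$. The number of generation-$k$ edges between $A$ and $B$ is
\[
X := |\sigma_{k-1}(A_0) \cap B_1|,
\]
and since $\sigma_{k-1}$ is a uniform random permutation of $\{0,1\}^{k-1}$, the random variable $X$ is hypergeometric with universe of size $K/2$. The badly-matched event is exactly $\{X \geq m^*\}$, where $m^* := \lceil (1-\alpha)(a+b)/2 \rceil$.

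To bound $\p[X \geq m^*]$, I would use the trivial inequality $\mathbf{1}[X \geq m^*] \leq \binom{X}{m^*}$ and evaluate its expectation by summing over all size-$m^*$ subsets $T \subseteq B_1$. Any fixed such $T$ is contained in $\sigma_{k-1}(A_0)$ with probability $\binom{a}{m^*}/\binom{K/2}{m^*}$ (determine $\sigma_{k-1}^{-1}(T)$ as a subset of $A_0$, then extend arbitrarily), so
\[
\p[X \geq m^*] \;\leq\; \binom{b}{m^*}\,\frac{\binom{a}{m^*}}{\binom{K/2}{m^*}}.
\]
Applying the standard entropy bounds $\binom{n}{k} \leq 2^{nH(k/n)}$ and $\binom{n}{k} \geq (n+1)^{-1} 2^{nH(k/n)}$ (with $H$ the binary entropy) then gives
\[
\log_2 \p[X \geq m^*] \;\leq\; a\,H(m^*/a) + b\,H(m^*/b) - \tfrac{K}{2}H(2m^*/K) + O(\log K).
\]

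It remains to control each term. From $a, b \in [(1-\alpha)j/2, (1+\alpha)j/2]$, both $m^*/a$ and $m^*/b$ lie in $[(1-\alpha)^2/(1+\alpha),\,1]$ (if $m^* > b$ the bound is trivial since $\binom{b}{m^*} = 0$), so $H(m^*/a), H(m^*/b) \leq \delta_0(\alpha)$ for some $\delta_0(\alpha) \to 0$. Combined with $a + b \leq (1+\alpha) j$, the first two terms contribute at most $\delta(\alpha)\,j$. For the third term, $2m^*/K = (1 - O(\alpha))\,j/K$, and a direct computation with $H$ yields $H(cx) \geq (1 - \delta(\alpha))H(x)$ uniformly for $x \in (0,\eta]$ and $c \in [1-O(\alpha), 1]$, so $\tfrac{K}{2} H(2m^*/K) \geq \tfrac{K}{2}(1-\delta(\alpha))H(j/K)$. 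Combining the three estimates and absorbing the $2^{O(\log K)}$ polynomial correction into the $3\alpha K^2$ prefactor (the extra factor of $\alpha$ can be extracted, if desired, by the alternative route of union-bounding over $m \in [m^*, \min(a,b)]$, a range of size at most $\alpha j + 1$) yields the desired bound.

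The main obstacle is the continuity estimate $H(cx) \geq (1-\delta(\alpha)) H(x)$ when $j/K$ is small: there $H(j/K) \approx (j/K) \log(K/j)$ vanishes while the logarithmic derivative of $H$ diverges, so the relative error from replacing $j/K$ by the nearby point $(1-O(\alpha))j/K$ must be checked to be bounded by $\delta(\alpha)$ and not by a quantity growing like $\log(K/j)$.
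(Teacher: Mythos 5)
Your proposal is correct and follows essentially the same route as the paper: both reduce the badly-matched event to a union/first-moment bound over pairs of subsets $A'\subseteq A$, $B'\subseteq B$, use $\p[N_k(A')=B']=\binom{K/2}{\ell}^{-1}$, and close with entropy estimates plus the mean-value/continuity argument on $H$ to pass from $H(2m^*/K)$ to $H(j/K)$ (the obstacle you flag at the end is exactly what the paper resolves with Lagrange's theorem and the inequality $-\tfrac{j}{K}\log\tfrac{j}{K}\le H(j/K)$). The one cosmetic difference is that your bound $\mathbf{1}[X\geq m^*]\leq\binom{X}{m^*}$ packages the count at a single threshold $m^*$, whereas the paper sums over $\ell$; both give the same exponent and the same polynomial prefactor.
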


\begin{claim}
\label{claim:small-sets-vertex-expand}Let $c>3$. Then there is some
$n_{0}\in\mathbb{N}$ so that for every $n>n_{0}$ and every $S\subseteq V_{n}$
so that $|S|\leq cn$, 
\[
|\partial S|\geq\frac{1}{c^{2}}|S|.
\]
\end{claim}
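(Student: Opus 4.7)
The plan is to prove the claim by induction on $n$, with a constant base case $n_0 = n_0(c)$ chosen large enough that $c n_0 < 2^{n_0}$ (so that the claim is not vacuously false for $S = V_{n_0}$). For the inductive step at $n > n_0$, I would exploit the bipartition $V_n = V_n^0 \sqcup V_n^1$ induced by the generation-$n$ matching $\sigma_{n-1}$, writing $S = S_0 \sqcup S_1$ with $S_i = S \cap V_n^i$ and assuming without loss of generality that $|S_0| \geq |S_1|$.

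The argument then forks based on whether the larger half exceeds the inductive threshold $c(n-1)$. In Case (a), $|S_0| \leq c(n-1)$ (so also $|S_1| \leq c(n-1)$), and each $V_n^i$ induces a copy of $G_{n-1}$; the inductive hypothesis applied within each half gives $|\partial^{G_{n-1}} S_i| \geq |S_i|/c^2$. Since these two boundary sets lie in disjoint halves of $V_n$ and each is contained in $\partial S$, summing gives $|\partial S| \geq |S|/c^2$. In Case (b), $|S_0| > c(n-1)$ forces $|S_1| < c$. Using only the matching $\sigma_{n-1}$, a bijection $V_n^0 \to V_n^1$: at most $|S_1|$ of the vertices of $S_0$ can be matched into $S_1$, so at least $|S_0| - |S_1| > c(n-1) - c = c(n-2)$ vertices of $S_0$ get mapped to distinct elements of $V_n^1 \setminus S_1 \subseteq \partial S$. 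Since $|S|/c^2 \leq n/c$ and the elementary inequality $c(n-2) \geq n/c$ holds once $n \geq 2c^2/(c^2-1)$ (a small constant for $c > 3$), we obtain $|\partial S| \geq |S|/c^2$.

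For the base case $n = n_0$, I would verify the claim directly. The main tool is Mantel's inequality combined with the triangle-freeness of $G_n$, which follows from the generation structure: two distinct neighbors of a vertex $v$ are of the form $N_{k_1}(v), N_{k_2}(v)$ with $k_1 < k_2$, and the edge between them, if it existed, would have generation exactly $k_2$, requiring $N_{k_2}$ to match them—but then $N_{k_2}(v)$ would also have to equal one of them, contradicting $k_1 < k_2$. With Mantel giving $e(S) \leq |S|^2/4$, the trivial inequality $|\partial S| \geq |S| - 2e(S)/n$ already yields $|\partial S| \geq |S|/c^2$ whenever $|S| \leq 2n(c^2-1)/c^2$, which covers the small-set regime; for the remaining range in the base, $|V_{n_0} \setminus S|$ is too small for any complement vertex to avoid $S$ (since no vertex of $G_{n_0}$ can have all $n_0$ neighbors inside a set of size less than $n_0$), so $\partial S = V_{n_0} \setminus S$ and the bound follows from a cardinality comparison.

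The main obstacle I expect is the base case: covering the intermediate range of $|S|$ in $G_{n_0}$ where Mantel's inequality ceases to be tight but the complement is not yet tiny. One has to choose $n_0$ carefully so that the two regimes overlap, and also so that $c(n-1) < 2^{n-1}$ for all $n > n_0$ (which ensures the inductive hypothesis in Case (a) is never invoked on a set that fills an entire copy of $G_{n-1}$, where the claim would be false).
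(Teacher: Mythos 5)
Your inductive step is correct: Case (a) legitimately uses that each $V_n^i$ induces a twisted hypercube $G_{n-1}$ and that $\partial^{G_{n-1}}S_i\subseteq\partial^{G_n}S$ with the two boundary pieces disjoint, and Case (b)'s counting via the last matching is sound. The problem is the base case, and the gap there is genuine. Mantel's theorem together with the degree bound gives $\abs{\partial S}\geq\abs S-2e(S)/n\geq\abs S\left(1-\abs S/(2n)\right)$, and this is $\geq\abs S/c^{2}$ only when $\abs S\leq 2n(c^{2}-1)/c^{2}<2n$. But the claim must hold for all $\abs S\leq cn$ with $c>3$, so you are missing every set with $2n(c^{2}-1)/c^{2}<\abs S\leq cn$. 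Your fallback -- that the complement is so small every vertex of it must touch $S$ -- requires $\abs{V_{n_{0}}\setminus S}<n_{0}$, i.e.\ $\abs S>2^{n_{0}}-n_{0}$; that is exponentially larger than $cn_{0}$, so the two regimes never meet, no matter how $n_{0}$ is chosen. (At bottom, the Mantel+regularity bound only controls edges leaving $S$, and converting edges to boundary vertices costs a factor of $n$, so it cannot beat a linear-in-$n$ threshold for $\abs S$.)

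The paper avoids this entirely by observing that for any vertex $v$, the second neighborhood $A_{n}(v)$ has size at least $\binom{n}{2}$, which is quadratic in $n$ and therefore dwarfs $\abs S\leq cn$. Concretely: if some $v\in S$ has at least an $n/c$-fraction of its neighbors outside $S$ you are done immediately; otherwise $T:=N(v)\cap S$ has $\abs T>(1-1/c)n$, and $\abs{N(T)}\geq\abs{A_{n}(v)}-n\abs{N(v)\setminus S}\geq\binom{n}{2}-n^{2}/c\geq n^{2}/7$, so $\abs{\partial S}\geq\abs{N(T)}-\abs S\geq n^{2}/7-cn$, which dominates $\abs S/c^{2}$ once $n$ is large. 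If you want to salvage your induction, you would need to feed this second-neighborhood bound into the base case -- but then the induction becomes redundant, since the direct argument already covers all sufficiently large $n$.
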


\begin{proof}[Proof of Theorem \ref{thm:good_vertex_expander}]
 Fix $\eta>0$ and let $\alpha>0$ be chosen later. We define $F_{n}$
to be the event that $\boldsymbol{G}_{n}$ is not an $(\eta,\alpha)$-vertex
expander. We will show that 
\[
\lim_{n\to\infty}\p[F_{n}]=0.
\]
 We will bound
\begin{equation}
\p\left[F_{n}\right]\leq\sum_{j=0}^{\eta N}\p\left[\exists S\text{ s.t. }\abs S=j\text{ and }\abs{\partial S}<\alpha\abs S\right]\label{eq:vertex-expansion-union-bound}
\end{equation}
By Claim \ref{claim:small-sets-vertex-expand}, it is enough to start
this sum with $j=cn$, as long as $\alpha\leq\frac{1}{c^{2}}.$The
constant $c$ will be determined at the end of the proof. Thus the
right-hand side of (\ref{eq:vertex-expansion-union-bound}) is equal
to
\[
\sum_{j=cn}^{\eta N}\p\left[\exists S\text{ s.t. }\abs S=j\text{ and }\abs{\partial S}<\alpha|S|\right].
\]
Let $S\subseteq V_{n}$ with $\abs S=j$. For $x\in\left\{ 0,1\right\} ^{k}$,
denote $S_{x}=S\intersect V_{n}^{x}$. If the sets $S_{0},S_{1}$
are not $\left(n,\alpha\right)$-badly-matched, then by Remark \ref{rem:badly_matched_sets},
the edges from $\sigma_{n-1}$ are enough to guarantee a large boundary,
i.e. the set $S$ has $\alpha$-expansion. This happens in particular
when $\abs{\abs{S_{1}}-\abs{S_{0}}}\geq\alpha\abs S$. We may thus
restrict ourselves to $S$ that satisfy $\abs{S_{i}}\geq\frac{1}{2}\left(1-\alpha\right)\abs S$.
Thus

\begin{equation}
\left(1-\alpha\right)\frac{j}{2}\leq\abs{S_{i}}\leq\left(1+\alpha\right)\frac{j}{2}\label{eq:si_nearly_balanced}
\end{equation}
for every $i\in\left\{ 0,1\right\} $. Similarly, if $S_{i0,},S_{i1}$
are not $\left(n-1,3\alpha\right)$-badly-matched for any $i\in\left\{ 0,1\right\} $,
then $\abs{\partial_{n-1}\left(S_{i0}\union S_{i1}\right)}\geq3\alpha\abs{S_{i}}\geq\frac{3}{2}\alpha\left(1-\alpha\right)\abs S$,
which is larger than $\alpha\abs S$ for $\alpha$ small enough. This
happens in particular when $\abs{\abs{S_{i1}}-\abs{S_{i0}}}\geq3\alpha\abs{S_{i}}$.
We may thus further restrict ourselves to $S$ that satisfy $\abs{S_{ij}}\geq\frac{1}{2}\left(1-3\alpha\right)\abs{S_{i}}$,
which means that 
\begin{equation}
(1-3\alpha)\left(1-\alpha\right)\frac{j}{4}\leq\abs{S_{ij}}\leq(1+3\alpha)\left(1+\alpha\right)\frac{j}{4}\label{eq:sij_nearly_balanced}
\end{equation}
for every $i,j\in\left\{ 0,1\right\} $. 

Finally, if there is are sets $S_{ij0},S_{ij1}$ for some $i,j\in\left\{ 0,1\right\} $
that are not $\left(n-2,5\alpha\right)$-badly-matched, then for $\alpha$
small enough we have $\abs{\partial_{n-2}\left(S_{ij0}\union S_{ij1}\right)}\geq\alpha\abs S$,
and we can assume that 

\[
(1-5\alpha)(1-3\alpha)\left(1-\alpha\right)\frac{j}{8}\leq\abs{S_{ij}}\leq(1+5\alpha)(1+3\alpha)\left(1+\alpha\right)\frac{j}{8}.
\]
In particular, this happens when
\[
(1-10\alpha)\frac{j}{8}\leq\abs{S_{ij}}\leq(1+10\alpha)\frac{j}{8}.
\]
Thus, to bound $\p\left[\exists S\text{ s.t. }\abs S=j\text{ and }\abs{\partial S}<\alpha\abs S\right],$
we only need to consider sets $S$ whose all four pairs $S_{ij0},S_{ij1}$
are $\left(n-2,10\alpha\right)$ badly-matched; any other set has
$\alpha$-expansion by Remark \ref{rem:badly_matched_sets}. By Proposition
\ref{prop:number_of_badly_matched_pairs}, with $k=n-2$, for each
$i,j\in\left\{ 0,1\right\} $ there are at most $5000\alpha^{3}N^{3}2^{\frac{N}{8}\left(1+\delta\left(10\alpha\right)\right)H\left(\frac{j}{N}\right)+j\delta\left(10\alpha\right)}$
sets $S_{ij}$ such that $S_{ij0},S_{ij1}$ are $\left(n-2,10\alpha\right)$-badly-matched,
so there are at most 
\[
\left(5000\alpha^{3}N^{3}2^{\frac{N}{8}\left(1+\delta\left(10\alpha\right)\right)H\left(\frac{j}{N}\right)+j\delta\left(10\alpha\right)}\right)^{4}=5000^{4}\alpha^{12}N^{12}2^{\frac{N}{2}\left(1+\delta\left(10\alpha\right)\right)H\left(\frac{j}{N}\right)+4j\delta\left(10\alpha\right)}
\]
possible sets to consider. Thus 
\begin{align}
\p\left[\exists S\text{ s.t. }\abs S=j\text{ and }\abs{\partial S}<\alpha|S|\right] & \leq5000^{4}\alpha^{12}N^{12}2^{\frac{N}{2}\left(1+\delta\left(10\alpha\right)\right)H\left(\frac{j}{N}\right)+4j\delta\left(10\alpha\right)}\cdot\label{eq:less_summands_in_union_bound}\\
 & \,\,\,\,\,\,\,\,\max_{S}\p\left[S\text{ does not have \ensuremath{\alpha}-expansion}\right],
\end{align}
where $S$ is restricted as above. To bound the probability, observe
that for any fixed $S\subseteq V_{n}$, 
\begin{align*}
\p\left[S\text{ does not have \ensuremath{\alpha}-expansion}\right] & \leq\p[S_{0},S_{1}\text{ are \ensuremath{\left(n,\alpha\right)}-badly-matched }\\
 & \text{\,\,\,\,\,\,\,\,\,\,\,\,\,\,\,\,\,\,\,and }S_{00},S_{01}\text{ are \ensuremath{\left(n-1,3\alpha\right)}-badly-matched}].
\end{align*}
As the event $\left\{ S_{0},S_{1}\text{ are badly-matched}\right\} $
depends only on the permutation $\sigma_{n-1}$ and $\left\{ S_{00},S_{11}\text{ are badly-matched}\right\} $
depends only on the permutation $\sigma_{n-2}$, these two events
are independent. By the relations (\ref{eq:si_nearly_balanced}) and
(\ref{eq:sij_nearly_balanced}), we can apply Proposition \ref{prop:probability_of_badly_matched},
yielding
\begin{align*}
\p\left[S\text{ does not have \ensuremath{\alpha}-expansion}\right] & =\p\left[S_{0},S_{1}\text{ are not \ensuremath{\left(n,\alpha\right)}-badly-matched}\right]\\
 & \,\,\,\,\,\,\,\,\,\,\,\,\cdot\p\left[S_{00},S_{01}\text{ are not \ensuremath{\left(n-1,3\alpha\right)}-badly-matched}\right].\\
 & \leq3\alpha N^{2}2^{-\frac{N}{2}\left(1-\delta\left(\alpha\right)\right)H\left(\frac{j}{N}\right)+\delta\left(\alpha\right)j}\cdot3\alpha N^{2}2^{-\frac{N}{4}\left(1-\delta\left(3\alpha\right)\right)H\left(\frac{j}{N}\right)+\delta\left(3\alpha\right)j}.
\end{align*}
For simplicity, in the next inequalities we unify all the expressions
of the form $\delta(c\alpha)$ appearing in the exponents to $\delta(\alpha)$
(that goes to $0$ as $\alpha\to0$). Using (\ref{eq:less_summands_in_union_bound}),
we get that, 
\[
\p\left[\exists S\text{ s.t. }\abs S=j\text{ and }\abs{\partial S}<\alpha|S|\right]\leq9\cdot5000^{4}\alpha^{14}N^{16}2^{\delta\left(\alpha\right)j}(2^{\frac{1}{4}\left(1-\delta\left(\alpha\right)\right)})^{-NH\left(\frac{j}{N}\right)}.
\]
Note that we abused notation. Plugging this back in (\ref{eq:vertex-expansion-union-bound})
we get
\begin{equation}
\p\left[F_{n}\right]\leq9\cdot5000^{4}\alpha^{14}N^{16}\sum_{j=cn}^{\eta N}2^{\delta\left(\alpha\right)j}\left(2^{\frac{1}{4}(1-\delta(\alpha))}\right)^{-NH\left(\frac{j}{N}\right)}.\label{eq:midway_calc_expansion}
\end{equation}
We take $\alpha$ so that $\delta\left(\alpha\right)<\frac{1}{2}$
and get that $2^{\frac{1}{4}(1-\delta(\alpha))}\geq2^{\frac{1}{8}}$.
In addition, we use the well known inequality $H\left(x\right)\geq4x\left(1-x\right)$
to bound the right-hand side of (\ref{eq:midway_calc_expansion})
from above by

\[
9\cdot5000^{4}\alpha^{14}N^{16}\sum_{j=cn}^{\eta N}2^{\delta\left(\alpha\right)j}2{}^{-\frac{j}{2}\left(1-\eta\right)}=9\cdot5000^{4}\alpha^{14}N^{16}\sum_{j=cn}^{\eta N}2{}^{-\frac{j}{2}\left(1-\eta-2\delta\left(\alpha\right)\right)}.
\]
Finally, by taking $\alpha$ so that $1-\eta-2\delta(\alpha)\geq\frac{1-\eta}{2}$
and taking $c$ so that $2^{\frac{cn/2}{2}(\frac{1-\eta}{2})}\geq N^{16}=2^{16n}$
we get that the sum on the right-hand side is at most
\begin{align*}
9\cdot5000^{4}\alpha^{14}N^{16}2^{-\frac{cn/2}{2}(\frac{1-\eta}{2})}\sum_{j=\frac{c}{2}n}^{\infty}2{}^{-\frac{j}{4}\left(1-\eta\right)} & \leq9\cdot5000^{4}\alpha^{14}\sum_{j=\frac{c}{2}n}^{\infty}2{}^{-\frac{j}{4}(1-\eta)}\\
 & =\frac{9}{1-2^{-(1-\eta)/4}}\cdot5000^{4}\alpha^{14}2^{-\frac{cn}{8}(1-\eta)}.
\end{align*}
Thus,
\[
\p\left[F_{n}\right]\leq\frac{9}{1-2^{-(1-\eta)/4}}\cdot5000^{4}\alpha^{14}2^{-\frac{cn}{8}(1-\eta)}.
\]
This tends to $0$ as $n\to\infty$.
\end{proof}
\begin{proof}[Proof of Proposition \ref{prop:number_of_badly_matched_pairs}]
We can count the subsets $A\subseteq V_{k}^{0},B\subseteq V_{k}^{1}$
by first choosing a set of edges of the $k$-th matching that are
connected to $A\cup B$. For each chosen edge $\{x,y\}$ where $x\in V_{k}^{0},y\in V_{k}^{1}$
we decide whether $x\in A,y\in B$, or $x\in A,y\notin B$ or $x\notin A,y\in B$.
For sets that are $(k,\alpha)$-badly matched, our count yields the
following. 
\begin{enumerate}
\item The number of edges that are adjacent to $A\cup B$ is at least $(1-\alpha)\frac{j}{2}$
(the lower bound is achieved when $\abs A=\abs B=(1-\alpha)\frac{j}{2}$
and $N_{k}\left(A\right)=B$). It is at most $\left(1+\alpha\right)\frac{j}{2}+2\alpha\left(1+\alpha\right)\frac{j}{2}$
(since there could be at most $(1+\alpha)\frac{j}{2}$ that cross
from $A$ to $B$, and no more than $2\alpha(1+\alpha)\frac{j}{2}$
additional edges that are adjacent to only one of $A,B$, since $A,B$
are supposed to be $(k,\alpha)$-badly matched). Since $\left(1+\alpha\right)\frac{j}{2}+2\alpha\left(1+\alpha\right)\frac{j}{2}\leq\left(1+4\alpha\right)\frac{j}{2}$
for $\alpha\leq\frac{1}{2}$, using the relation ${n \choose k}\leq2^{nH\left(k/n\right)}$,
the number of possible choices for edges adjacent to $A\cup B$ is
at most
\begin{align}
\sum_{\ell=(1-\alpha)\frac{j}{2}}^{(1+4\alpha)\frac{j}{2}}{\frac{K}{2} \choose \ell} & \leq\sum_{\ell=(1-\alpha)\frac{j}{2}}^{(1+4\alpha)\frac{j}{2}}2^{\frac{K}{2}H(\frac{\ell}{K/2})}\leq\sum_{\ell=(1-\alpha)\frac{j}{2}}^{(1+4\alpha)\frac{j}{2}}2^{\frac{K}{2}H\left(\frac{j}{K}\right)+\frac{K}{2}\left(H\left(\frac{\ell}{K/2}\right)-H\left(\frac{j/2}{K/2}\right)\right)}.\label{eq:num-of-edges-adjacent-to-union}
\end{align}
By Lagrange's mean-value theorem, $\frac{K}{2}\left(H\left(\frac{\ell}{K/2}\right)-H\left(\frac{j/2}{K/2}\right)\right)=\left(\ell-\frac{j}{2}\right)H'\left(\xi\right)=-\left(\ell-\frac{j}{2}\right)\log\frac{\xi}{1-\xi}$
for some $\xi$ between $\frac{\ell}{K/2}$ and $\frac{j/2}{K/2}$.
Thus we can write $\xi=c'\frac{j/2}{K/2}$ for some $(1-\alpha)\leq c'\leq(1+4\alpha)$,
and we have 
\begin{align}
\abs{\left(\ell-\frac{j}{2}\right)H'\left(\xi\right)} & \leq4\alpha j\abs{H'\left(\xi\right)}\nonumber \\
 & =4\alpha j\abs{\log\xi-\log\left(1-\xi\right)}.\label{eq:entropy_derivative}
\end{align}
We now bound the logarithms. Since $\xi=c'\frac{j}{K}\leq c'\eta$,
we have that $-\log\left(1-\xi\right)\leq-\log\left(1-c'\eta\right)$;
the quantity on the right-hand side is just a constant (provided that
$\alpha$ is small enough so that $\left(1+4\alpha\right)\eta<1$).
For $\abs{\log\xi}$, we have 
\begin{align*}
4\alpha j\abs{\log\xi} & =4\alpha j\abs{\log\left(c'\frac{j}{K}\right)}\leq4\alpha j\abs{\log c'}+4\alpha j\log\frac{j}{K}\\
 & =4\alpha j\abs{\log c'}+8\frac{K}{2}\alpha\frac{j}{K}\log\frac{j}{K}\\
 & \leq4\alpha j\abs{\log c'}+8\frac{K}{2}\alpha H\left(\frac{j}{K}\right).
\end{align*}
Thus, there exists a constant $c>0$ (that depends on $\eta$) such
that 
\[
\abs{\left(\ell-\frac{j}{2}\right)H'\left(\xi\right)}\leq c\alpha j+c\alpha\frac{K}{2}H\left(\frac{j}{K}\right).
\]
for some $c>0$ that depends on $\eta$. Thus the left-hand side in
(\ref{eq:num-of-edges-adjacent-to-union}) is at most
\begin{equation}
5\alpha K2^{(1+c\alpha)\frac{K}{2}H(\frac{j}{K})+c\alpha j}.\label{eq:final-num-of-edges-adjacent-to-union}
\end{equation}
\item Then we choose out of the edges adjacent to $A\cup B$ the edges that
touch $A$ only, and the edges that touch $B$ only. As $A,B$ are
$(k,\alpha)$-badly matched, at least a $(1-\alpha)$-fraction of
the edges must touch both $A$ and $B$, so no more than an $\alpha$-fraction
of the edges are available to touch only one of the sets. Assuming
that $\alpha<1/4$, the number of possibilities (for a given edge
set chosen in the previous step) is at most
\begin{equation}
\sum_{\ell_{A}=0}^{\alpha(1+4\alpha)\frac{j}{2}}\binom{(1+4\alpha)\frac{j}{2}}{\ell_{A}}\sum_{\ell_{B}=0}^{\alpha(1+4\alpha)\frac{j}{2}}\binom{(1+4\alpha)\frac{j}{2}}{\ell_{B}}\leq(\alpha j)^{2}\binom{(1+4\alpha)\frac{j}{2}}{\alpha(1+4\alpha)\frac{j}{2}}^{2}\leq\alpha^{2}K^{2}2^{2jH(\alpha)}.\label{eq:only_one_endpoint_in_A_or_B}
\end{equation}
\end{enumerate}
Multiplying (\ref{eq:final-num-of-edges-adjacent-to-union}) and (\ref{eq:only_one_endpoint_in_A_or_B}),
and setting $\delta(\alpha)=2H(\alpha)+c\alpha$, the number of badly-matched
pairs is bounded by 
\[
5\alpha^{3}K^{3}2^{\frac{K}{2}(1+\delta(\alpha))H(\frac{j}{K})+\delta(\alpha)j}.
\]

\end{proof}
\begin{proof}[Proof of Proposition \ref{prop:probability_of_badly_matched}]
To bound the probability in (\ref{eq:badly_matched_probability_bound}),
we go over all possible subsets $A'\subseteq A$ and sum the probability
that the set of outgoing edges from $A'$ is some set $B'\subseteq B$.
Since $A$ and $B$ both have sizes in the interval$\left[(1-\alpha)\frac{j}{2},(1+\alpha)\frac{j}{2}\right]$,
the size of $A',B'$ should be at least $(1-\alpha)(1-\alpha)\frac{j}{2}\geq(1-2\alpha)\frac{j}{2}.$
The probability is bounded by 
\begin{align}
\sum_{\ell=(1-2\alpha)\frac{j}{2}}^{(1+\alpha)\frac{j}{2}}\,\sum_{\stackrel{A'\subseteq A,B'\subseteq B}{|A'|=|B'|=\ell}}\p\left[N_{k}(A')=B'\right] & =\sum_{\ell=(1-2\alpha)\frac{j}{2}}^{(1+\alpha)\frac{j}{2}}\,\sum_{\stackrel{A'\subseteq A,B'\subseteq B}{|A'|=|B'|=\ell}}\frac{1}{\binom{\frac{K}{2}}{\ell}}\nonumber \\
 & \leq\sum_{\ell=(1-2\alpha)\frac{j}{2}}^{(1+\alpha)\frac{j}{2}}\,\sum_{\stackrel{A'\subseteq A,B'\subseteq B}{|A'|=|B'|=\ell}}\frac{K}{2}2^{-\frac{K}{2}H(\frac{\ell}{K/2})}\nonumber \\
 & \stackrel{\left(\text{assuming \ensuremath{\alpha<\frac{1}{5}}}\right)}{\leq}\sum_{\ell=(1-2\alpha)\frac{j}{2}}^{(1+\alpha)\frac{j}{2}}\,\binom{(1+\alpha)\frac{j}{2}}{(1-2\alpha)\frac{j}{2}}^{2}\frac{K}{2}2^{-\frac{K}{2}H(\frac{\ell}{K/2})}\nonumber \\
 & \leq2^{2(1+\alpha)\frac{j}{2}H(\frac{1-2\alpha}{1+\alpha})}\frac{K}{2}\sum_{\ell=(1-2\alpha)\frac{j}{2}}^{(1+\alpha)\frac{j}{2}}2^{-\frac{K}{2}H(\frac{\ell}{K/2})}.\label{eq:prob_of_badly_matched_pair}
\end{align}
By Lagrange's mean-value theorem, we write 
\[
\frac{K}{2}H\left(\frac{\ell}{K/2}\right)=\frac{K}{2}H\left(\frac{j}{K}\right)+\frac{K}{2}\left(H\left(\frac{\ell}{K/2}\right)-H\left(\frac{j/2}{K/2}\right)\right)=\frac{K}{2}H\left(\frac{j}{K}\right)+\left(\ell-\frac{j}{2}\right)H'\left(\xi\right)
\]
for some $\xi$ between $\frac{\ell}{K/2}$ and $\frac{j/2}{K/2}.$
As $|\ell-\frac{j}{2}|\leq\alpha j$, we bound $(\ell-\frac{j}{2})H'(\xi)$
by $\alpha j|H'(\xi)$|. Write $\xi=c\frac{j}{K}$ for some $1-2\alpha\leq c'\leq1+2\alpha$.
Then (similar to (\ref{eq:entropy_derivative}) in the proof of the
previous proposition)

\[
\abs{(\ell-\frac{j}{2})H'\left(\xi\right)}\leq c\alpha j+c\alpha\frac{K}{2}H\left(\frac{j}{K}\right)
\]
for some constant $c>0$ which only depends on $\eta$. Thus (\ref{eq:prob_of_badly_matched_pair})
is at most 
\[
2^{2(1+\alpha)\frac{j}{2}H(\frac{1-2\alpha}{1+\alpha})}\frac{K}{2}\sum_{\ell=(1-2\alpha)\frac{j}{2}}^{(1+\alpha)\frac{j}{2}}2^{-\frac{K}{2}\left(1-\delta\left(\alpha\right)\right)H(\frac{j}{K})+\delta(\alpha)j}\leq3\alpha K^{2}2^{-\frac{K}{2}(1-\delta(\alpha))H(\frac{j}{K})+\delta(\alpha)j},
\]
where $\delta\left(\alpha\right)=\max\left\{ \left(1+\alpha\right)H\left(\frac{1-2\alpha}{1+\alpha}\right),c\alpha\right\} $.
\end{proof}
\begin{proof}[Proof of Claim \ref{claim:small-sets-vertex-expand}]
For every vertex $v\in V_{n}$, the second neighborhood of $v$,
$A_{n}(v):=\{u\in V_{n}\mid d(v,u)=2\}$, is of size at least $\binom{n}{2}$.
This can be seen by induction. The base case for $n=2$ is clear.
Assume without loss of generality that $v\in V_{n}^{0}$ and partition
$A_{n}\left(v\right)=\left(A_{n}\left(v\right)\cap V_{n}^{0}\right)\cup\left(A_{n}\left(v\right)\cap V_{n}^{1}\right).$
Note that in the instance of $G_{n-1}$ whose vertex set is $V_{0}$,
the second neighborhood of $v$ is $A_{n-1}\left(v\right)=A_{n}\left(v\right)\intersect V_{n}^{0}$.
Thus, by the induction hypothesis, $\abs{A_{n}(v)\cap V_{n}^{0}}\geq\binom{n-1}{2}$.
In addition, $A_{n}(v)\cap V_{n}^{1}$ contains the neighborhood of
$N_{n}(v)$ inside $V_{n}^{1}$, which is of size $n-1$. Summing
up sizes we get $\abs{A_{n}(v)}\geq\binom{n-1}{2}+\binom{n-1}{1}=\binom{n}{2}.$
Note that for the hypercube $Q_{n}$ we have strict equality. 

Now fix $S\subseteq V_{n}$ of size at most $cn$ and let $v\in S$.
If a $\frac{1}{c}$-fraction of the neighborhood of $v$ is not in
$S$ then $|\partial S|\geq\frac{1}{c}n\geq\frac{1}{c^{2}}|S|$. Otherwise,
at least $(1-\frac{1}{c})$-fraction of $v$'s neighbors are inside
$S$. Denote these vertices as $T:=N(v)\cap S$. Thus $\abs{\partial S}\geq|N(T)|-|S|$.
Since the neighborhood of the neighborhood of $v$ is $A_{n}(v)\cup\{v\}$,
the neighborhood of $T$ is of size at least 
\[
\abs{A_{n}\left(v\right)}-n\abs{N\left(v\right)\setminus S}\geq\binom{n}{2}-\frac{1}{c}n^{2}\geq\frac{1}{7}n^{2}.
\]
Hence $\abs{\partial S}\geq\frac{1}{7}n^{2}-cn\geq\frac{1}{c^{2}}|S|$
for a large enough $n$.
\end{proof}

\subsection{Eigenvalues\label{subsec:postponed_eigenvalue_proofs}}
\begin{proof}[Proof of Proposition \ref{prop:bad_gap_for_duplimatching}]
Since $G_{n}$ is $n$-regular, its largest eigenvalue $\lambda_{1}$
is $n$, and its corresponding eigenvector $f_{n}:\left\{ 0,1\right\} ^{n}\to\r$
satisfies $f_{n}\left(x\right)=1$. To show that $\lambda_{2}\geq n-2$,
let $g_{n}:\left\{ 0,1\right\} ^{n}\to\r$ be given by 
\[
g_{n}\left(x\right)=\left(-1\right)^{x_{n}},
\]
i.e. $g_{n}$ takes value $1$ on the first instance of $G_{n-1}$
in $G_{n}$, and $-1$ on the second instance. Then $A_{n}g_{n}=\left(n-2\right)g_{n}$. 

The proof that $\lambda_{2}\leq n-2$ is by induction. The claim clearly
holds for $n=1$, where $G_{1}$ is just an edge. Assume it holds
for all $k\leq n-1$, and let $h:\left\{ 0,1\right\} ^{n}\to\r$ be
an eigenvector of $A_{n}$ that is orthogonal to both $f_{n}$ and
$g_{n}$. If we write 
\[
h\left(x\right)=\begin{cases}
h_{0}\left(x_{1},\ldots,x_{n-1}\right) & x_{n}=0\\
h_{1}\left(x_{1},\ldots,x_{n-1}\right) & x_{n}=1,
\end{cases}
\]
for some functions $h_{i}:\left\{ 0,1\right\} ^{n-1}\to\r$, then
both $h_{0}$ and $h_{1}$ are orthogonal to $f_{n-1}$, and by the
induction hypothesis, we have $h_{i}^{T}A_{n-1}h_{i}\leq\left(n-3\right)\norm{h_{i}}_{2}^{2}$.
Using the recursive matrix representation (\ref{eq:matrix_next_step})
of the twisted hypercube graph, we can write 
\[
h^{T}A_{n}h=\left(\begin{array}{cc}
h_{0}^{T} & h_{1}^{T}\end{array}\right)\left(\begin{array}{cc}
A_{n-1}^{0} & P\\
P^{T} & A_{n-1}^{1}
\end{array}\right)\left(\begin{array}{c}
h_{0}\\
h_{1}
\end{array}\right),
\]
where $P$ is the $2^{n-1}\times2^{n-1}$ permutation matrix representing
$\sigma_{n-1}$, and $A_{n-1}^{0}$ and $A_{n-1}^{1}$ are the adjacency
matrices of the two instances of $G_{n-1}$. Explicitly opening the
products, we get
\begin{align*}
h^{T}A_{n}h & =\left(\begin{array}{cc}
h_{0}^{T} & h_{1}^{T}\end{array}\right)\left(\begin{array}{c}
A_{n-1}^{0}h_{0}+Ph_{1}\\
P^{T}h_{0}+A_{n-1}^{1}h_{1}
\end{array}\right)\\
 & =h_{0}^{T}A_{n-1}^{0}h_{0}+h_{0}^{T}Ph_{1}+h_{1}^{T}P^{T}h_{0}+h_{1}^{T}A_{n-1}^{1}h_{1}\\
 & \leq\left(n-3\right)\norm{h_{0}}_{2}^{2}+2h_{0}^{T}Ph_{1}+\left(n-3\right)\norm{h_{1}}_{2}^{2}\\
 & =\left(n-3\right)\norm h_{2}^{2}+2h_{0}^{T}Ph_{1}\leq\left(n-3\right)\norm h_{2}^{2}+2\norm{h_{0}}_{2}\norm{h_{1}}_{2}\\
 & \leq\left(n-3\right)\norm h_{2}^{2}+\norm{h_{0}}_{2}^{2}+\norm{h_{1}}_{2}^{2}=\left(n-2\right)\norm h_{2}^{2}.
\end{align*}
\end{proof}
\begin{proof}[Proof of Lemma \ref{lem:small_number_of_cycles}]
In the following, $C$ is a constant depending on $k$ whose value
may change from instance to instance. A set of edges $F\subseteq E\left(\boldsymbol{G}_{n}\right)$
is said to be ``finalized at generation $m$'' if for every edge
$\left\{ x,y\right\} \in F$, $\gamma\left(x,y\right)\leq m$, and
there exists at least one edge such that $\gamma\left(x,y\right)=m$.
For a given $u\in V_{n}$, let $w=N_{m}\left(u\right)$ be its $m$-neighbor,
and let $E_{m}\left(u\right)$ be event that there exists a cycle
of length no more than $k$ which contains the edge $\left\{ u,w\right\} $
and is finalized at generation $m$.

We will now bound the probability of the event $E_{m}\left(u\right)$.
Since $I_{m-1}\left(u\right)\neq I_{m-1}\left(w\right)$, i.e. $u$
and $w$ are found on different copies of $V_{m-1}$, in order for
a cycle of length $\leq k$ to exist, there must also be an $m$-generation
edge going from $I_{m-1}\left(w\right)$ back to $B_{<m}\left(u,k\right)$;
otherwise, any path starting with the edge $\left\{ u,w\right\} $
cannot reach $u$ again. In fact, this edge must be reachable from
$w$ in at most $k$ steps. Let $W$ be the set of all $z\in I_{m-1}\left(w\right)$
such that there exists a simple path $P=\left(x_{1},\ldots,x_{t}\right)$
with the following properties:
\begin{enumerate}
\item $P$ is a shortest path from $w$ to $z$, and $t\leq k$.
\item $\gamma\left(x_{i},x_{i+1}\right)\leq m$ for all $i=1,\ldots,t-1$.
\item $\gamma\left(x_{t-1},x_{t}\right)<m$.
\item $P$ does not contain the edge $\left\{ u,w\right\} $.
\end{enumerate}
In other words, $W$ is the set of all vertices in $I_{m-1}\left(w\right)$
which can be reached from $w$ by a path of at most $k$ edges of
generation at most $m$, and which can still send out an $m$-generation
edge without backtracking. If there are no edges from $W$ to $B_{<m}\left(u,k\right)$,
then there is no cycle of length $\leq k$ which contains $\left\{ u,w\right\} $
(see Figure \ref{fig:possible_cycle} for a graphical depiction). 

\begin{figure}[H]
\begin{centering}
\includegraphics[scale=0.5]{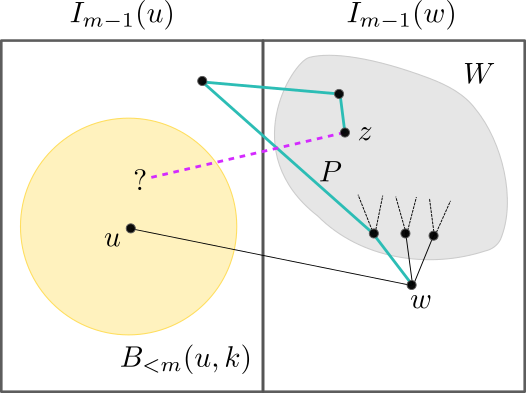}
\par\end{centering}
\caption{\label{fig:possible_cycle}There can be a cycle containing the edge
$\left\{ u,w\right\} $ only if there is an $m$-generation edge crossing
from some $z\in W$ to $B_{<m}\left(u,k\right)$. Since both $B_{<m}\left(u,k\right)$
and $W$ are small in comparison to $I_{m-1}\left(u\right)$, the
probability of this happening is small. }
\end{figure}

Given $z\in W$, the probability that $N_{m}\left(z\right)\in B_{<m}\left(u,k\right)$
depends only on the $m$-generation edges used in the path $P$. Since
$\sigma_{m-1}$ is uniform, we can bound this probability by 
\begin{align*}
\p\left[N_{m}\left(z\right)\in B_{<m}\left(u,k\right)\mid z\in W\right] & \leq\frac{\abs{B_{<m}\left(u,k\right)}}{\max\left\{ 1,2^{m-1}-k\right\} }\leq C\frac{m^{k+1}}{2^{m}}
\end{align*}
for some $C>0$ which depends on $k$ (we subtract $k$ in the denominator,
since in the worst case the path from $w$ to $z$ has at most $k$
$m$-generation edges from $I_{m-1}\left(w\right)$ to $I_{m-1}\left(u\right)\backslash B_{<m}\left(u,k\right)$).
Since there are at most $m^{k+1}$ vertices in $W$, taking the union
bound gives 
\[
\p\left[E_{m}\left(u\right)\right]\leq C\frac{m^{2k+2}}{2^{m}}.
\]
Letting $E_{m}=\union_{u\in B\left(v,2k\right)}E_{m}\left(u\right)$,
we then have 
\[
\p\left[E_{m}\right]\leq C\frac{m^{2k+2}}{2^{m}}n^{2k+1}.
\]
In particular, there exists a constant $C>0$ such that 
\[
\sum_{m>m_{0}}\p\left[E_{m}\right]\leq C2^{-m_{0}}m_{0}^{2k+2}n^{2k+1}.
\]
If a vertex $z\in B\left(v,k\right)$ is part of a cycle of length
at most $k$ which is finalized at generation $m$, then necessarily
there exists some $u\in B\left(v,2k\right)$ such that $E_{m}\left(u\right)$
holds. Thus, if $E_{m}^{c}$ holds for every $m>m_{0}$, then $z$
can only be contained in cycles of length at most $k$ which are finalized
at generation $\leq m_{0}$. The number of such cycles is bounded
by 
\[
\sum_{m=1}^{m_{0}}\sum_{i=1}^{k}m^{i}\leq\sum_{m=1}^{m_{0}}km^{k}\leq Cm_{0}^{k+1}
\]
for some constant $C>0$. The probability of $F_{v}$ is then lower
bounded by
\begin{align*}
\p\left[F_{v}\right] & \geq\p\left[\intersect_{m>m_{0}}E_{m}^{c}\right]\\
 & =1-\p\left[\union_{m>m_{0}}E_{m}\right]\\
 & \geq1-\sum_{m>m_{0}}\p\left[E_{m}\right]\\
 & \geq1-C2^{-m_{0}}m_{0}^{2k+2}n^{2k+1}
\end{align*}
as needed.
\end{proof}
\begin{proof}[Proof of Theorem \ref{thm:eigenvalues_follow_semicircle_law}]
We use the moment method. While the main technique is classical (see
e.g. \cite{mckay_eigenvalues_for_large_regular_graphs}), we write
the proof in full for completeness. 

Proving that $\mu_{n}$ converges weakly to $\circlaw$ in probability
means that for every continuous function $f:\r\to\r$, we have convergence
in probability of the expected value of $f$: 
\begin{equation}
\int_{\r}fd\mu_{n}\stackrel{P}{\to}\int_{\r}fd\circlaw\label{eq:meaning_of_convergence_in_probability}
\end{equation}
as $n\to\infty$. By the Weierstrass theorem, every continuous function
on a closed interval can be arbitrarily well-approximated by a finite-degree
polynomial. Since $\circlaw$ is supported on a bounded interval,
it suffices to show (\ref{eq:meaning_of_convergence_in_probability})
for functions of the form $f_{k}=x^{k}$, i.e. showing that the $k$-th
moments of $\mu_{n}$ converge the to $k$-th moments of $\circlaw$.
These moments are known, and are given by
\[
\int_{\r}x^{k}d\circlaw=\begin{cases}
C_{k/2} & k\text{ is even}\\
0 & k\text{ is odd},
\end{cases}
\]
where $C_{m}$ is the $m$-th Catalan number, and is equal to the
number of ordered rooted trees with $m$ edges. We will first show
that $\e\int_{\r}x^{k}d\mu_{n}\to\int_{\r}x^{k}d\circlaw$, and then
show that $\var\left(\int_{\r}x^{k}d\mu_{n}\right)\to0$; by Chebyshev's
inequality, this implies the desired convergence in probability. 

Since $\mu_{n}$ is just the empirical measure of the eigenvalues
of $A/\sqrt{n}$, we have 
\begin{align*}
\int_{\r}x^{k}d\mu_{n} & =\frac{1}{2^{n}}\sum_{i=1}^{2^{n}}\left(\frac{\lambda_{i}}{\sqrt{n}}\right)^{k}=\frac{1}{2^{n}}\tr\left(\frac{A}{\sqrt{n}}\right)^{k}\\
 & =\frac{1}{2^{n}}\frac{1}{n^{k/2}}\sum_{i_{1},\ldots,i_{k}=1}^{2^{n}}A_{i_{1}i_{2}}A_{i_{2}i_{3}}\cdot\ldots\cdot A_{i_{k-1}i_{k}}A_{i_{k}i_{1}}.
\end{align*}
For a fixed $i_{1}$, the sum $\sum_{i_{2},\ldots,i_{k}}A_{i_{1}i_{2}}\cdots A_{i_{k}i_{1}}$
is the number of walks of length $k$ in $\boldsymbol{G}_{n}$ that
start and end at the vertex $i_{1}$. Let $X_{v}\left(t\right)$ be
the simple random walk on $\boldsymbol{G}_{n}$ which starts at vertex
$v$. Then, since $\boldsymbol{G}_{n}$ is $n$-regular, the number
of simple random walks of length $k$ is $n^{k}$, and we have
\begin{align}
\int_{\r}x^{k}d\mu_{n} & =\frac{1}{2^{n}}n^{k/2}\sum_{i=1}^{2^{n}}\p\left[X_{i}\left(k\right)=i\right],\label{eq:moments_via_random_walks}
\end{align}
where the probability is over the randomness induced by the random
walk. Taking expectations over the measure induced by the permutations,
we thus have, for any $v\in V_{n}$, 
\begin{align*}
\e\int_{\r}x^{k}d\mu_{n} & =\frac{1}{2^{n}}n^{k/2}2^{n}\e\left[\p\left[X_{v}\left(k\right)=v\right]\right]\\
 & =n^{k/2}\e\left[\p\left[X_{v}\left(k\right)=v\right]\right].
\end{align*}

In the following, $C$ is a constant depending on $k$ whose value
may change from instance to instance. Let $m_{0}=8\left(k+1\right)\log_{2}n$.
By Lemma \ref{lem:small_number_of_cycles}, with probability greater
than $1-Cn^{-k}$, the event $F_{v}$ holds, i.e. every vertex in
$B\left(v,k\right)$ is contained in no more than $Cm_{0}^{k+1}$
cycles of length at most $k$. By conditioning on $F_{n}$, we have
\begin{align*}
\e\left[\p\left[X_{v}\left(k\right)=v\right]\right] & =\e\left[\p\left[X_{v}\left(k\right)=v\right]\mid F_{n}\right]\p\left[F_{n}\right]+\e\left[\p\left[X_{v}\left(k\right)=v\right]\mid F_{n}^{c}\right]\p\left[F_{n}^{c}\right].
\end{align*}
The second term on the right-hand side is bounded below by $0$ and
above by 
\[
\e\left[\p\left[X_{v}\left(k\right)=v\right]\mid F_{n}^{c}\right]\p\left[F_{n}^{c}\right]\leq\p\left[F_{n}^{c}\right]\leq Cn^{-k}=o\left(n^{-k/2}\right).
\]
Since $\p\left[F_{n}\right]=1-o\left(1\right)$, we then have 
\[
\e\left[\p\left[X_{v}\left(k\right)=v\right]\right]=\left(1+o\left(1\right)\right)\e\left[\p\left[X_{v}\left(k\right)=v\right]\mid F_{n}\right]+o\left(n^{-k/2}\right).
\]
To bound this term, we will count the number of random walks that
return to the origin. 

A step $\left(X_{v}\left(t\right),X_{v}\left(t+1\right)\right)$ is
said to be a \emph{forward step }if $d_{\boldsymbol{G}_{n}}\left(v,X_{v}\left(t\right)\right)<d_{\boldsymbol{G}_{n}}\left(v,X_{v}\left(t+1\right)\right)$,
and a \emph{backward step} if $d_{\boldsymbol{G}_{n}}\left(v,X_{v}\left(t\right)\right)\geq d_{\boldsymbol{G}_{n}}\left(v,X_{v}\left(t+1\right)\right)$.
By analyzing the combinatorics of forward and backward steps, it was
shown by McKay \cite[Lemma 2.1]{mckay_eigenvalues_for_large_regular_graphs}
that in an $n$-regular graph where every ball $B\left(v,k\right)$
has no cycles at all, 
\begin{equation}
\#\left\{ \text{Walks of length \ensuremath{k} which return to the origin}\right\} =\left(1+o\left(1\right)\right)n^{k/2}C_{k/2}.\label{eq:walks_in_acyclic_neighborhoods}
\end{equation}
We now show that under $F_{n}$, the number of walks in $\boldsymbol{G}_{n}$
is of the same magnitude. Let $\ell$ be the number of forward steps
of the walk $X_{v}\left(t\right)$ which are part of a cycle of length
no larger than $k$, and suppose that $X_{v}\left(k\right)=v$.

If $\ell=0$, then the walk must make $k/2$ forward steps and $k/2$
backward steps, since it returns to the origin. This means that $k$
must be even, and the walk traces out a rooted tree with $k/2$ edges.
Since the number of cycles with at most $k$ edges is no larger than
$C\left(\log n\right)^{k+1}$, there are at least least $n-C\left(\log n\right)^{k+1}-1$
choices for every forward step. By (\ref{eq:walks_in_acyclic_neighborhoods}),
the total number of walks with $\ell=0$ is then equal to 
\[
\left(1+o\left(1\right)\right)n^{k/2}C_{k/2}
\]
when $k$ is even, and $0$ when $k$ is odd. 

If $\ell>0$, then the walk makes $\ell$ forward steps which are
part of a cycle, and no more than $k/2-\ell$ forward steps which
are not part of a cycle. There are no more than $k$ backward steps,
and each such step has no more than $\left(C\left(\log n\right)^{k+1}+1\right)$
options. In total, the number of such walks is then bounded above
by 
\[
\left(1+o\left(1\right)\right)n^{k/2-\ell}\left(C\left(\log n\right)^{k+1}\right)^{k+\ell}=O\left(n^{k/2-\ell}\left(\log n\right)^{4k}\right).
\]
Altogether, since the total number of walks of length $k$ is $n^{k}$,
we have 
\begin{align*}
n^{k/2}\e\left[\p\left[X_{v}\left(k\right)=v\right]\right] & =n^{k/2}\left(1+o\left(1\right)\right)\frac{1}{n^{k}}\left(n^{k/2}C_{k/2}+\sum_{l=1}^{k/2}O\left(n^{k/2-\ell}\left(\log n\right)^{4k^{2}}\right)\right)\\
 & =\left(1+o\left(1\right)\right)C_{k/2}
\end{align*}
as needed.

All that is left is to show that the variance is small. By (\ref{eq:moments_via_random_walks}),
the second moment of $\int x^{k}d\mu_{n}$ is given by 

\begin{align*}
\e\left[\left(\int_{\r}x^{k}d\mu_{n}\right)^{2}\right] & =\e\left[\left(\frac{1}{2^{n}}n^{k/2}\sum_{i=1}^{2^{n}}\p\left[X_{i}\left(k\right)=i\right]\right)^{2}\right]\\
 & =\e\left[\frac{1}{2^{2n}}n^{k}\sum_{i,j=1}^{2^{n}}\p\left[X_{i}\left(k\right)=i\right]\p\left[X_{j}\left(k\right)=j\right]\right].
\end{align*}
Set $m_{0}=16\left(k+1\right)\log_{2}n$. Recall that for a vertex
$v\in V_{n}$, $F_{v}$ is the event that each vertex in $B\left(v,k\right)$
is contained in mo more than $C\left(m_{0}+1\right)^{k+1}$ cycles
of length no more than $k$. Denote $F_{i,j}=F_{i}\intersect F_{j}$.
By Lemma \ref{lem:small_number_of_cycles}, $\p\left[F_{i,j}\right]\geq1-2C2^{-m_{0}}m_{0}^{2k+2}n^{2k+1}$.
By the of total probability, we have 
\begin{align*}
\e\left[\left(\int_{\r}x^{k}d\mu_{n}\right)^{2}\right]= & \frac{1}{2^{2n}}\sum_{i,j=1}^{2^{n}}\e\left[n^{k}\p\left[X_{i}\left(k\right)=i\right]\p\left[X_{j}\left(k\right)=j\right]\mid F_{i,j}\right]\p\left[F_{i,j}\right]\\
 & \,\,\,+\frac{1}{2^{2n}}\sum_{i,j=1}^{2^{n}}\e\left[n^{k}\p\left[X_{i}\left(k\right)=i\right]\p\left[X_{j}\left(k\right)=j\right]\mid F_{i,j}^{c}\right]\p\left[F_{i,j}^{c}\right].
\end{align*}
The second term on the right-hand-side is bounded below by $0$ and
above, due to the choice of $m_{0}$, by $o\left(1\right)$. Thus
\[
\e\left[\left(\int_{\r}x^{k}d\mu_{n}\right)^{2}\right]=\left(1+o\left(1\right)\right)\frac{1}{2^{2n}}\sum_{i,j=1}^{2^{n}}\e\left[n^{k}\p\left[X_{i}\left(k\right)=i\right]\p\left[X_{j}\left(k\right)=j\right]\mid F_{i,j}\right]+o\left(1\right).
\]
Using the same path-counting argument as above, by (\ref{eq:walks_in_acyclic_neighborhoods})
we have that under $F_{i,j}$,
\[
\e n^{k}\p\left[X_{i}\left(k\right)=i\right]\p\left[X_{j}\left(k\right)=j\right]=\left(1+o\left(1\right)\right)C_{k/2}^{2},
\]
and taking the sum over all $i$ and $j$ shows that 
\[
\e\left[\left(\int_{\r}x^{k}d\mu_{n}\right)^{2}\right]=\left(1+o\left(1\right)\right)\e\int_{\r}x^{k}d\mu_{n},
\]
which implies that $\var\left(\int_{\r}x^{k}d\mu_{n}\right)\to0$.
\end{proof}

\subsection{Asymmetry\label{subsec:postponed_asymmetry_proofs}}

The proof of Theorem \ref{thm:asymmetry_of_g_n} relies on the following
lemma, whose proof we postpone to the end of this section.
\begin{lem}
\label{lem:no_partitions_with_matchings}There exists a constant $C>0$
such that the probability that there exists a decomposition of $V_{n}$
into two disjoint subsets other than $V_{n}^{0}\sqcup V_{n}^{1}$
such that the edges between them form a matching is smaller than $Cn2^{-n}$.
\end{lem}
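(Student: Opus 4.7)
The plan is to analyze the structure of any hypothetical non-trivial matching-crossing decomposition $(A,B)\ne(V_n^0,V_n^1)$, and then to exploit the uniformity and inter-generational independence of the permutations $\sigma_1,\ldots,\sigma_{n-1}$ to show that such a decomposition is super-exponentially rare. The announced bound $Cn\cdot 2^{-n}$ then comes easily out of a union bound over the $n$ possible values of the ``minimum crossing generation.''

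First I would prove, by a short induction on $n$, that every twisted hypercube $G_n$ is connected regardless of the choice of permutations; the step uses that the $2^{n-1}$ edges added by $\sigma_{n-1}$ glue the two connected copies $G_{n-1}^0$ and $G_{n-1}^1$. Writing $A_i:=A\cap V_n^i$ and $B_i:=B\cap V_n^i$, a connectivity argument then rules out the case where any one of $A_0,B_0,A_1,B_1$ is empty. For example, if $A_0=V_n^0$ while $B_1\ne\emptyset$, then each $y\in B_1$ already has a crossing edge $\{\sigma_{n-1}^{-1}(y),y\}$ into $V_n^0\subseteq A$, so the matching condition forces every $V_n^1$-neighbor of $y$ to lie in $B_1$; hence $B_1$ is a union of connected components of $G_{n-1}^1$, so $B_1=V_n^1$, contradicting $(A,B)\ne(V_n^0,V_n^1)$.

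Next, let $k^*$ be the smallest generation appearing in the crossing matching $M$. If $k^*=n$, then $\chi:=\mathbf 1_A$ has no crossings within either half, so by connectivity $\chi$ is constant on each $V_n^i$, yielding the excluded trivial partition. For $k^*<n$, $\chi$ has no crossings at generations $<k^*$ and is therefore constant on each connected subgraph formed by generation-$(<k^*)$ edges -- precisely the instances of $G_{k^*-1}$. Thus $\chi$ depends only on the suffix $(v_{k^*},\ldots,v_n)$; write $\chi(v)=\tilde\chi(v_{k^*},\ldots,v_n)$. Within any $G_{k^*}$-instance $I_{k^*}(v)$ the restriction of $\tilde\chi$ depends only on $v_{k^*}$, so it is either constant or equals $v_{k^*}$ or $1-v_{k^*}$; by the definition of $k^*$ at least one instance is non-constant.

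For any generation $j>k^*$ and any vertex lying in a non-constant $G_{k^*}$-instance, the matching condition forbids a generation-$j$ crossing (its sole allowed crossing is already used up at generation $k^*$). Writing out $\chi(v)=\chi(N_j(v))$ using the definition of $N_j$ and the fact that $\tilde\chi$ depends only on bits $k^*,\ldots,n$ translates into a restrictive structural identity describing how the relevant $\sigma_{j-1}$ must act on the bit-positions $k^*,\ldots,j-1$. In the simplest case $\tilde\chi(u_{k^*},\ldots,u_n)=u_{k^*}$ this collapses to ``$\sigma_{j-1}$ preserves bit $k^*$,'' an event of probability $\binom{2^{j-1}}{2^{j-2}}^{-1}=2^{-\Theta(2^{j-1})}$ for a uniform bijection; analogous super-exponentially small bounds hold for every other admissible $\tilde\chi$. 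Multiplying these probabilities over $j=k^*+1,\ldots,n$ (independent across generations) and then summing over the admissible $\tilde\chi$ and over $k^*\in\{1,\ldots,n-1\}$ by a union bound yields a total that is far smaller than $Cn\cdot 2^{-n}$. The main obstacle is the rigorous verification that the $\sigma_{j-1}$-constraint is as restrictive as claimed for every admissible $\tilde\chi$, not only the bit-indicator case; this requires a careful enumeration of the possible local structures of $\tilde\chi$ near each $G_{k^*}$-instance, which is the bookkeeping-heavy part of the proof.
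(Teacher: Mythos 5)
Your opening observations (connectivity, defining $k^*$ as the minimum generation among the crossing edges, the fact that $\chi := \mathbf 1_A$ is constant on each $G_{k^*-1}$-instance, and the fact that on each $G_{k^*}$-instance the induced $\tilde\chi$ is one of four functions of $v_{k^*}$) are correct and sound. However, your proof strategy diverges sharply from the paper's, and the part you describe as ``bookkeeping-heavy'' is in fact a genuine gap that I do not believe can be closed by bookkeeping alone.

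The core problem is the union bound over $\tilde\chi$. Once $k^*$ is fixed, the number of admissible $\tilde\chi$'s (choosing one of the four local types on each $G_{k^*}$-instance) is of order $4^{2^{n-k^*}}=2^{2^{n-k^*+1}}$. For $k^*=1$ this is $2^{2^n}$, i.e. essentially all partitions. Against this you have to multiply by a per-$\tilde\chi$ probability bound coming from the constraints on $\sigma_{k^*},\dots,\sigma_{n-1}$. Even in your best-case scenario (each generation contributing roughly $\binom{2^{j-1}}{2^{j-2}}^{-1}$), the product over $j$ is only about $2^{-2^{n-1}}$, which does not beat the $2^{2^n}$ enumeration: the sum diverges. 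Worse, the per-$\tilde\chi$ bound is not uniformly that small. Your constraints for generations $j>k^*$ only apply to vertices in \emph{non-constant} $G_{k^*}$-instances; when $\tilde\chi$ has few non-constant instances (say a single one), each generation only constrains $O(2^{k^*})$ values of the relevant permutation, giving a per-generation probability like $2^{-\Theta(2^{k^*})}$, which for small $k^*$ is merely a constant. Making the argument rigorous would require a case analysis over both $k^*$ and the number of non-constant instances, cross-referencing the entropy of the enumeration against the shrinking probabilities, and it is far from clear the numbers work out; there is also a wrinkle you gloss over, namely that for $j>k^*$ there are $2^{n-j}$ instances of $\sigma_{j-1}$ whose joint distribution is arbitrary (identical in the duplicube, independent in the independent model), so ``the relevant $\sigma_{j-1}$'' is not a single well-defined object and the probabilities do not simply multiply across generations in the way you suggest.

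The paper avoids this entanglement entirely by a recursion in $n$ rather than an enumeration in $k^*$. It sets $p_n := \p[\exists\text{ bad decomposition of }V_n]$ and, splitting into three cases according to whether the induced cuts on $V_n^0$ and $V_n^1$ coincide with their own top-generation cuts, proves the inequality $p_n \le \tfrac12 p_{n-1} + \binom{2^{n-1}}{2^{n-2}}^{-1} + c\, e^{\,n - c'2^{n}}$, where the factor $\tfrac12$ comes from the probability that $\sigma_{n-1}$ fails to spoil a putative bad decomposition of one half, and the two error terms come from a direct counting ($P_3$-packing plus de Moivre--Laplace) applied to the remaining cases. Solving the recursion then yields $p_n < Cn2^{-n}$. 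This sidesteps any union bound over exponentially many $\tilde\chi$'s, because the existential quantifier over $(A,B)$ is pushed inward through the recursion rather than expanded. I would recommend you look for such a recursive reduction rather than try to repair the direct union bound.
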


\begin{proof}[Proof of Theorem \ref{thm:asymmetry_of_g_n}]
 Let $X_{n}$ be the number of automorphisms of $\boldsymbol{G}_{n}$.
We partition these permutations into three kinds:
\begin{enumerate}
\item Automorphisms of $\boldsymbol{G}_{n}$ that swap between $V_{n}^{0}$
and $V_{n}^{1}$. Let $W_{n}$ be the number of these automorphisms.
\item Automorphisms of $\boldsymbol{G}_{n}$ that preserve both $V_{n}^{0}$
and $V_{n}^{1}$. Let $Y_{n}$ be the number of these automorphisms.
Note that $Y_{n}\geq1$, since it always counts the trivial automorphism.
\item Automorphisms of $\boldsymbol{G}_{n}$ that replace a proper subset
$A_{0}\subseteq V_{n}^{0}$ with a proper subset $A_{1}\subseteq V_{n}^{1}$
of the same size (so that $V_{n}^{0}\backslash A_{0}$ and $V_{n}^{1}\backslash A_{1}$
stay inside $V_{n}^{0}$ and $V_{n}^{1}$, respectively). Let $Z_{n}$
be the number of these automorphisms.
\end{enumerate}
If $\varphi$ is a non-trivial automorphism of the third kind, then
the edges between $A_{0}$ and $V_{n}^{0}\backslash A_{0}$ form a
matching, and the edges between $A_{1}$ and $V_{n}^{1}\backslash A_{1}$
form a matching (since, e.g. if there is a vertex $v\in A_{0}$ connected
by more than one edge to $V_{n}^{0}\backslash A_{0}$, then $\varphi\left(v\right)$
will have more than one edge across the main cut). But then, letting
$A:=A_{0}\sqcup A_{1}$ and $B=V_{n}\backslash A$, we get that the
edges between $A$ and $B$ form a matching as well, giving a partition
$V_{n}=A\sqcup B$ with a matching between them. By Lemma \ref{lem:no_partitions_with_matchings},
the probability that such a matching exists (and therefore, that there
is a non-trivial automorphism swapping $A_{0}$ and $A_{1}$) is bounded
by $O\left(n2^{-n}\right)$. Thus, denoting by $F$ the event $F:=\left\{ \exists m\in\left[n/20,n-1\right]\text{ s.t. }Z_{m}>0\right\} $,
we have
\begin{equation}
\p\left[F\right]=O\left(n^{2}2^{-n/20}\right).\label{eq:bounding_probs_of_second_kind}
\end{equation}

We turn to bound $Y_{n},W_{n}$. For brevity, we abbreviate $\sigma:=\sigma_{n-1}$.
In the first two types, the values of an automorphism $\varphi$ on
$V_{n}^{0}$ determines the value of $\varphi$ on all $V_{n}$. Explicitly,
in the first case, for every $v\in V_{n-1}$, if we denote $\varphi(v,0)=(\varphi_{0}(v),1)$
and $\varphi\left(v,1\right)=\left(\varphi_{1}\left(v\right),0\right)$,
then we must have $\varphi_{0}\left(v\right)=\sigma\varphi_{1}\sigma(v)$.
For automorphisms of the second kind, we have similarly $\varphi_{0}=\sigma^{-1}\varphi_{1}\sigma$.
In both cases it must be that $\varphi_{0},\varphi_{1}\in\aut(\boldsymbol{G}_{n-1})$.
So in particular, $W_{n},Y_{n}\leq X_{n-1}$, and 
\begin{equation}
W_{n}+Y_{n}\leq2X_{n-1}.\label{eq:non_chaotic_autmorphs_cant_be_too_many}
\end{equation}

We first bound $\p\left[W_{n}\geq1|X_{n-1}\right]$. By Markov's inequality,
this is at most $\e[W_{n}|X_{n-1}].$ Write out $W_{n}=\sum_{\varphi_{0}}\sum_{\varphi_{1}}\one_{\varphi_{0}\in\aut\left(\boldsymbol{G}_{n-1}\right)}\cdot\one_{\varphi_{1}\in\aut\left(\boldsymbol{G}_{n-1}\right)}\cdot\one_{\varphi_{0}=\sigma\varphi_{1}\sigma},$
and in particular we have 
\[
\e\left[W_{n}|X_{n-1}\right]\leq X_{n-1}^{2}\max_{\varphi_{0},\varphi_{1}}\left\{ \p_{\sigma}\left[\varphi_{0}=\sigma\varphi_{1}\sigma\right]\mid\varphi_{0},\varphi_{1}\text{ bijections of }V_{n-1}\right\} .
\]
So we need to bound $\p_{\sigma}\left[\varphi_{0}=\sigma\varphi_{1}\sigma\right]$.
Denote $A_{0}=V_{n-1}$. For $v_{0}\in A_{0}$, let $E_{v_{0}}$ be
the event that $\varphi_{0}\left(v_{0}\right)=\sigma\varphi_{1}\sigma\left(v_{0}\right)$.
In order for $E_{v}$ to hold, we must have either i) $\varphi_{1}\sigma\left(v_{0}\right)=v_{0}$
or ii) $\varphi_{0}\left(v_{0}\right)=\sigma\varphi_{1}\sigma\left(v_{0}\right)$.
The probability of the first is $1/2^{n-1}$, while the probability
of the second given that $\varphi_{1}\sigma\left(v_{0}\right)\neq v_{0}$
is $\frac{1}{2^{n-1}-1}$ In particular $E_{v_{0}}$ holds with probability
no greater than $2/\left(2^{n-1}-1\right)$. Conditioned on $E_{v_{0}}$,
the permutation $\sigma$ is a uniform permutation over the set $A_{1}=A_{0}\setminus\{v_{0},\varphi_{1}\sigma\left(v_{0}\right)\}$,
with $\abs{A_{1}}\geq\abs{A_{0}}-2$. By iteratively conditioning
on $E_{v_{0}},E_{v_{0}},\ldots$, where $v_{i}\in A_{i}$, $i=0,\ldots,2^{n/3}-1$,
we have that $\p\left[\varphi_{0}=\sigma\varphi_{1}\sigma\right]\leq(\frac{2}{2^{n-1}-2^{1+(n/3)}-1})^{2^{n/3}}\leq2^{-(n-7)2^{n/3}}$.
Hence
\begin{equation}
\e\left[W_{n}\mid X_{n-1}\right]\leq2^{-(n-7)2^{n/3}}\cdot X_{n-1}^{2}.\label{eq:wn-bound}
\end{equation}

Next we bound $\p\left[Y_{n}>1|X_{n-1},F^{c}\right]$. Although the
equation $\varphi_{0}=\sigma^{-1}\varphi_{1}\sigma$ seems similar
to the analogous equation $\varphi_{0}=\sigma\varphi_{1}\sigma$ for
$W_{n}$, we shouldn't expect the same argument to hold, since (for
example) even if $X_{n-1}=1$, we expect $W_{n}=0$, whereas $Y_{n}\geq1$
always since it counts the identity. The problem lies with automorphisms
with small conjugacy classes. For a given $\varphi_{1}$ and uniformly
random $\sigma$, the element $\sigma^{-1}\varphi_{1}\sigma$ is a
uniform element in the conjugacy class of $\varphi_{1}$. The probability
$\p_{\sigma}\left[\varphi_{0}=\sigma^{-1}\varphi_{1}\sigma\right]$
is then bounded by one over the size of the conjugacy class of $\varphi_{0}$
(it is $0$ if $\varphi_{0}$ and $\varphi_{1}$ are not conjugate).
The following claim, whose proof is found at the end of the section,
shows that under $F^{c}$, these classes must be large.
\begin{claim}
\label{claim:conjugacies-are-large}Assume that $F^{c}$ occurs. Then
the conjugacy class for every $\mathrm{Id}\ne\varphi\in\aut\left(\boldsymbol{G}_{n-1}\right)$
has size at least $2^{\frac{1}{4}n2^{n/4}}$.
\end{claim}

As in the case of $W_{n}$, we have 
\begin{align*}
\p\left[Y_{n}>1|X_{n-1},F^{c}\right] & \leq\e\left[Y_{n}-1\mid X_{n-1},F^{c}\right]\\
 & \leq\e\left[X_{n-1}^{2}\mid F^{c}\right]\max_{\varphi_{0},\varphi_{1}\in\aut\left(\boldsymbol{G}_{n-1}\right)\backslash\left\{ \mathrm{Id}\right\} }\p_{\sigma}\left[\varphi_{0}=\sigma^{-1}\varphi_{1}\sigma\right]\\
 & \leq\e\left[X_{n-1}^{2}\mid F^{c}\right]2^{-\frac{1}{4}n2^{n/4}},
\end{align*}
where the last inequality is due to Claim \ref{claim:conjugacies-are-large},
since the maximum is taken over elements with a conjugacy class of
size at least $2^{\frac{1}{4}n2^{n/4}}$.

Finally, under $F^{c}$, the only possible automorphisms for $m\in\left[n/20,n-1\right]$
are of the first two kinds, and by (\ref{eq:non_chaotic_autmorphs_cant_be_too_many})
we have 
\begin{equation}
X_{n-1}\leq2^{n-n/20-1}X_{n/20}\leq2^{19n/20}\left(2^{n/20}\right)!\leq2^{n+\frac{1}{20}n2^{n/20}}.\label{eq:bounding_under_no_second_kind}
\end{equation}
Thus 
\begin{align*}
\p\left[X_{n}>1\right] & \leq\p\left[F\right]+\p\left[F^{c}\intersect\left\{ X_{n}>1\right\} \right]\\
 & \leq\p\left[F\right]+\p\left[Z_{n}>0\right]+\p\left[F^{c}\intersect\left\{ W_{n}>0\right\} \right]+\p\left[F^{c}\intersect\left\{ Y_{n}>1\right\} \right]\\
 & \leq\p\left[F\right]+\p\left[Z_{n}>0\right]+\p\left[W_{n}>0,X_{n-1}\leq2^{n+\frac{1}{20}n2^{n/20}},F^{c}\right]+\p\left[Y_{n}>1,X_{n-1}\leq2^{n+\frac{1}{20}n2^{n/20}},F^{c}\right]\\
 & \leq\p\left[F\right]+\p\left[Z_{n}>0\right]+\frac{2^{2n+\frac{1}{10}n2^{n/20}}}{2^{(n-7)2^{n/3}}}+\frac{2^{2n+\frac{1}{10}n2^{n/20}}}{2^{\frac{1}{4}n2^{n/4}}}=O\left(n^{2}2^{-n/20}\right)
\end{align*}
as needed.
\end{proof}
\begin{proof}[Proof of Lemma \ref{lem:no_partitions_with_matchings}]
With start with some preliminaries which will be of use later on
in the proof. Let $V_{n}=\boldsymbol{A}\sqcup\boldsymbol{B}$ be a
uniformly random partition of $V_{n}$ into two halves of equal size,
and let $V_{n}=\boldsymbol{A}'\sqcup\boldsymbol{B}'$ be a partition
where $\boldsymbol{A}'$ is a binomial random subset of $V_{n}$ with
success probability $1/2$. The difference between these two random
partitions can be quantified as follows: for any arbitrary set $\Sigma$
of equal-sized partitions of $V_{n}$, we have
\begin{equation}
\p\left[\left(\boldsymbol{A},\boldsymbol{B}\right)\in\Sigma\right]=\p\left[\left(\boldsymbol{A}',\boldsymbol{B}'\right)\in\Sigma\mid\abs{\boldsymbol{A}'}=2^{n-1}\right]\leq\frac{\p\left[\left(\boldsymbol{A}',\boldsymbol{B}'\right)\in\Sigma\right]}{\p\left[\abs{\boldsymbol{A}'}=2^{n-1}\right]}.\label{eq:relating_two_models}
\end{equation}
The denominator in the right-hand side can be approximated by the
de Moivre-Laplace limit theorem, which states that 
\begin{equation}
\p\left[\abs{\boldsymbol{A}'}=2^{n-1}\right]=\frac{1+o\left(1\right)}{\sqrt{\pi}}2^{\left(1-n\right)/2}.\label{eq:moiver_laplace}
\end{equation}
Note that $V_{n}$ contains a vertex-disjoint union of $2^{n-2}$
copies of $P_{3}$, the $3$ vertex path, and let $\Sigma$ be the
set of all equal-sized partitions which do not separate middle vertex
from the other two vertices of any of these paths. The probability
of splitting such a path is $\frac{1}{4}$. Thus $\p\left[\boldsymbol{A}'\sqcup\boldsymbol{B}'\in\Sigma\right]\leq e^{-\ln\frac{4}{3}\cdot2^{-n-6}}$,
and so by (\ref{eq:relating_two_models}) and (\ref{eq:moiver_laplace}),
we have
\begin{equation}
\p\left[\left(\boldsymbol{A},\boldsymbol{B}\right)\in\Sigma\right]\leq e^{-\ln\frac{4}{3}\cdot2^{-n-6}}\frac{\sqrt{\pi}}{1+o\left(1\right)}2^{\left(n-1\right)/2}\leq c\cdot e^{n-\ln\frac{4}{3}\cdot2^{-n-6}}\label{eq:equal_size_partition_cycle_prob}
\end{equation}
for some constant $c>0$. Let us now choose $C$ so large that the
lemma is true for all $n\leq n_{0}$ for some large enough $n_{0}$.
We proceed by induction on $n$. Assume that the lemma is true for
$\boldsymbol{G}_{n-1}$. For a decomposition $V_{n}=A\sqcup B$ other
than $V_{n}^{0}\sqcup V_{n}^{1}$, let $E_{n}\left(A,B\right)$ be
the event that the edges between $A$ and $B$ form a matching. Let
$p_{n}:=\p\left[\exists A,B\text{ s.t. }E_{n}\left(A,B\right)\right]$.
We will show that $p_{n-1}\leq C\left(n-1\right)2^{-\left(n-1\right)}$
implies $p_{n}\leq Cn2^{-n}$.

Fix a decomposition $V_{n}=A\sqcup B$ and set $A_{j}=V_{n}^{j}\intersect A$
and $B_{j}=V_{n}^{j}\intersect B$ for $j=0,1$, so that $A_{0}\sqcup B_{0}=V_{n}^{0}$
and $A_{1}\sqcup B_{1}=V_{n}^{1}$. We consider three cases.
\begin{enumerate}
\item If both cuts coincide with the cuts of the $\left(n-1\right)$-th
generation (i.e. $A_{0}\sqcup B_{0}=V_{n}^{00}\sqcup V_{n}^{10}$
and $A_{1}\sqcup B_{1}=V_{n}^{01}\sqcup V_{n}^{11}$), then for $A\sqcup B$
to induce a matching, all vertices of $A_{0}$ should send the edges
of the last generation to $A_{1}$, and all vertices of $B_{0}$ should
send the edges of the last generation to $B_{1}$. The probability
of this event is exactly ${2^{n-1} \choose 2^{n-2}}^{-1}$.
\item If both cuts differ from the $\left(n-1\right)$-th generation cuts,
then assume first that $A_{0}$ is empty. By connectivity of $\boldsymbol{G}_{n-1}$,
there exists an edge between some vertex $a\in A_{1}$ and a vertex
$b\in B_{1}$. Since $a$ also sends an edge to $B_{0}$ (induced
by $\sigma_{n-1}$), in this case the edges do not form a matching.
We can therefore assume that all of $A_{0},B_{0},A_{1},B_{1}$ are
non-empty. Assume without loss of generality that $\abs{B_{1}}\geq\abs{A_{1}}$.
Let $a\in A_{0}$ be a vertex that sends an edge to $B_{0}$ (again,
such a vertex exists by connectivity of $G_{n-1}$). Since $\abs{B_{1}}\geq\abs{A_{1}}$,
the probability that there are no edges from $a$ to $B_{1}$ induced
by $\sigma_{n-1}$ is at most $1/2$, so with probability greater
than $1/2$ we do not get a matching. We get that
\[
\p\left[\exists A,B\text{ s.t. }\left\{ A_{0}\sqcup B_{0}\neq V_{n}^{00}\sqcup V_{n}^{10},A_{1}\sqcup B_{1}\neq V_{n}^{01}\sqcup V_{n}^{11}\right\} \intersect E_{n}\left(A,B\right)\right]\leq\frac{1}{2}p_{n-1}.
\]
\item Finally, if, say, the cut $A_{0}\sqcup B_{0}$ coincides with the
respective $\left(n-1\right)$-th generation cut $V_{n}^{00}\sqcup V_{n}^{10}$,
and $A_{1}\sqcup B_{1}\neq V_{n}^{01}\sqcup V_{n}^{11}$, then $A_{1}\sqcup B_{1}$
should divide the set $V_{1}$ into halves; otherwise (say, if $\abs{A_{1}}>\abs{B_{1}}$),
$B_{0}$ sends at least one $n$-th generation edge to $A_{1}$, and
so there is a vertex in $B_{0}$ with at least two neighbors in $A$,
and we do not get a matching. Moreover, we may also claim that the
$n$-th generation edges form a matching between $A_{0}$ and $A_{1}$,
and between $B_{0}$ and $B_{1}$ (since there is a matching between
$A_{0}$ and $B_{0}$, $A_{0}$ cannot have an edge with $B_{1}$,
and $B_{0}$ cannot have an edge with $A_{1}$). Then the desired
probability is exactly the probability that the cut $A_{1}\sqcup B_{1}$
of $\boldsymbol{G}_{n-1}$ forms a matching. Since the ends of the
edges of the matching between $V_{n}^{0}$ and $V_{n}^{1}$ with first
vertices in sets $V_{n}^{00}$ and $V_{n}^{10}$ form a decomposition
of $\boldsymbol{G}_{n-1}$ into halves which is independent of $\boldsymbol{G}_{n-1}$
itself, by (\ref{eq:equal_size_partition_cycle_prob}), the latter
probability is bounded by $c\cdot2^{n-1-2^{n-7}}$.
\end{enumerate}
Putting all these together, we get 
\[
p_{n}\leq\frac{1}{2}p_{n-1}+{2^{n-1} \choose 2^{n-2}}^{-1}+c\cdot e^{n-\ln\frac{4}{3}\cdot2^{n-7}}<Cn2^{-n}
\]
for $n>n_{0}$ large enough.
\end{proof}
\begin{proof}[Proof of Claim \ref{claim:conjugacies-are-large}]
Assume that $F^{c}$ holds, i.e. every automorphism of $\boldsymbol{G}_{m}$
either swaps or preserves $V_{m}^{0},V_{m}^{1}$ for $m\in\left[n/20,n-1\right]$.
We first show that every non-identity $\varphi\in\aut(\boldsymbol{G}_{n-1})$
has at least $2^{19n/20}$ points that are not fixed. 

Let $m\in\left[n/20,n-1\right]$, and let $\psi\in\aut\left(\boldsymbol{G}_{m}\right)$.
Since $F^{c}$ holds, $\psi$ either swaps or preserves $V_{m}^{0}$
and $V_{m}^{1}$, and so can be represented by the pair $\left(\psi_{0},\psi_{1}\right)$
as above. If it swaps $V_{m}^{0}$ and $V_{m}^{1}$, then it has no
fixed points. Hence, if $\psi$ has any fixed points, it must be preserving,
and its fixed points are a union of the fixed points of $\psi_{0}$
and $\psi_{1}$. In this case $\psi_{0}$ and $\psi_{1}$ are conjugate,
so they have the same number of fixed points; in particular, the number
of fixed points (resp. non-fixed points) of $\psi$ is equal to twice
the number of fixed points (resp. non-fixed points) of $\psi_{0}$. 

Thus by induction, if $\varphi\in\aut\left(\boldsymbol{G}_{n-1}\right)$
has any fixed points, then the number of non-fixed points is equal
to $2^{n-1-n/20}$ times the number of non-fixed points of any of
its restrictions $\psi:=\varphi_{\mid V_{n}^{z}}$, where $z\in\left\{ 0,1\right\} ^{n-1-n/20}$.
If $\psi$ is the identity, then $\varphi$ is the identity also.
Otherwise, $\psi$ has at least $2$ non-fixed points, and so $\varphi$
has at least $2^{19n/20}$ non-fixed points on $\boldsymbol{G}_{n-1}$.

Next we get our bound for the size of the conjugacy class of $\varphi$.
Recall that we can express $\varphi$ as a composition of disjoint
cycles.
\begin{enumerate}
\item If $\varphi$ has a cycle of length $m\geq2^{2n/5}+1$, then the number
of conjugacy classes is bounded below by the number of conjugacy classes
where (say) $1$ is in such a cycle. The number of such permutations
is at least $\binom{2^{n-1}-1}{m-1}\cdot(m-1)!$ (since we need to
choose $m-1$ more elements, and then order them in a cycle together
with $1$). This is $(2^{n-1}-1)\cdot(2^{n-1}-2)\cdot...\cdot(2^{n-1}-m)\geq2^{(n-2)2^{2n/5}}$.
\item Otherwise, the maximal cycle length is at most $2^{2n/5}$. We have
at least $2^{19n/20}$ points which are not fixed, so there are at
least $r=2^{11n/20}$ cycles. The number of conjugacy classes is then
lower-bounded by the number of conjugacy classes where all the elements
$1,2,...,r$ are in distinct cycles, and $r+1,r+2,...,2r$ are in
the same distinct cycles . For every fixed choice of cycles for the
first $r$ elements, there are $r!$ ways to choose where to put $r+1,r+2,...,2r$.
This is at least $2^{11n/20}!\geq2^{\frac{1}{4}n2^{n/4}}$.
\end{enumerate}
\end{proof}

\section{Acknowledgments}

We thank Elad Tzalik for discussions about the diameter and open questions,
and Sahar Diskin for finding some of the references to existing literature.

\bibliographystyle{plain}
\bibliography{duplimatching_bibliography}

\end{document}